\newcommand{\I}{\text{\sl\bf\textroundcap{\i}}}
\newcommand{\J}{\text{\sl\bf\textroundcap{\j}}}
\def\!{\mskip-\thinmuskip}
\def\,{\mskip\thinmuskip}
\def\;{\mskip\thickmuskip}
\newtheorem{theorem}{Theorem}[section]
\newtheorem{corollary}[theorem]{Corollary}
\newtheorem{proposition}[theorem]{Proposition}
\newtheorem{lemma}[theorem]{Lemma}
\newtheorem{definition}[theorem]{Definition}
\theoremstyle{remark}
\newtheorem{remark}[theorem]{Remark}
\newtheorem{example}[theorem]{Example}
\numberwithin{equation}{section}
\author[Michael J.\ Schlosser]{Michael J.\ Schlosser$^*$}
\address{Fakult\"at f\"ur Mathematik, Universit\"at Wien,
Oskar-Morgenstern-Platz~1, A-1090 Vienna, Austria}
\email{michael.schlosser@univie.ac.at}
\urladdr{http://www.mat.univie.ac.at/{\textasciitilde}schlosse}
\thanks{$^*$Partly supported by FWF Austrian Science Fund
grant F50-08 within the SFB
``Algorithmic and enumerative combinatorics''.}
\author[Meesue Yoo]{Meesue Yoo$^{**}$}
\address{Fakult\"at f\"ur Mathematik, Universit\"at Wien,
Oskar-Morgenstern-Platz~1, A-1090 Vienna, Austria}
\email{meesue.yoo@univie.ac.at}
\thanks{$^{**}$Fully supported by FWF Austrian Science Fund
grant F50-08 within the SFB
``Algorithmic and enumerative combinatorics''.}
\title{Elliptic rook and file numbers}
\subjclass[2010]{Primary 05A19;
Secondary 05A15, 05A30, 11B65, 11B73, 11B83}
\keywords{rook numbers, file numbers, $q$-analogues, elliptic analogues,
combinatorial identities, Stirling numbers, Lah numbers, trees}
\newcommand{\ta}{\theta}
\newcommand{\C}{\mathbb C}
\newcommand{\N}{\mathbb N}
\begin{document}

\begin{abstract}
Utilizing elliptic weights, we construct an elliptic analogue of
rook numbers for Ferrers boards.
Our elliptic rook numbers generalize Garsia and Remmel's
$q$-rook numbers by two additional independent
 parameters $a$ and $b$, and a nome $p$.
These are shown to satisfy an elliptic extension of a  factorization
theorem which in the classical case was established
by Goldman, Joichi and White and later was extended to the
$q$-case by Garsia and Remmel. We obtain similar results
for our elliptic analogues of Garsia and Remmel's $q$-file numbers
for skyline boards.
We also provide an elliptic extension of the $j$-attacking model
introduced by Remmel and Wachs.
Various applications of our results include 
elliptic analogues of (generalized) Stirling numbers of the first and
second kind, Lah numbers, Abel numbers, and $r$-restricted versions
thereof.
\end{abstract}

\maketitle


\section{Introduction}\label{sec:intro}
The theory of rook numbers was introduced by Kaplansky and Riordan~\cite{KR}
in 1946, and since then it has been further studied and developed by 
many people. In 1975, Goldman, Joichi and White~\cite{GJW} proved the
following result for rook numbers on a Ferrers board
$B=B(b_1,\dots, b_n)\subset[n]\times\mathbb N$
(see Section~\ref{section:rook}
for the precise definition of rook numbers and of a Ferrers board):
\begin{equation}
\prod_{i=1}^n (z+b_i-i+1) =
\sum_{k=0}^n r_{n-k}(B)\,z(z-1)\dots(z-k+1).\label{eqn:rookthm}
\end{equation}
We refer to an identity of the form \eqref{eqn:rookthm} as a
{\em product formula} or {\em factorization theorem}.

In 1986 Garsia and Remmel~\cite{GR} established a $q$-analogue
of rook numbers on Ferrers boards by considering
a statistic involving {\em rook cancellation}.
Among other results, they were in particular able to extend
the product formula in \eqref{eqn:rookthm} to the $q$-case.
In 1991 Wachs and White~\cite{WaW} introduced a $(p,q)$-analogue
of rook numbers which was later studied in more detail
by Briggs and Remmel~\cite{BR}
and by Remmel and Wachs~\cite{RW}.
In 2001, Haglund and Remmel~\cite{HR} considered a suitably modified
statistic involving rook cancellation on {\em shifted} Ferrers boards.
In this way they were able to develop a rook theory for partial matchings
of the complete graph $K_{2n}$ of $2n$ vertices and 
in particular proved a product formula analogous to \eqref{eqn:rookthm}.

In the present work, we construct elliptic analogues of the rook numbers
on Ferrers boards by utilizing {\em elliptic weights}.
Our elliptic rook numbers generalize Garsia and Remmel's $q$-rook numbers
by two additional independent parameters $a$ and $b$, and a nome $p$.
They also contain the aforementioned $(p,q)$-rook numbers of \cite{BR,WaW}
as a special case.
We show that the elliptic rook numbers satisfy an elliptic extension
of \eqref{eqn:rookthm}.
Our elliptic rook numbers can be used to define elliptic analogues
of the Stirling numbers of the second kind, of the Lah numbers,
and of certain ``$r$-restricted'' refinements of these numbers,
by specializing Ferrers boards.

We also provide an elliptic extension
of the $\J$-attacking model which was considered by
Remmel and Wachs~\cite{RW}. As a consequence, we present elliptic
extensions of the generalized Stirling numbers of the first and the
second kinds.

Similarly, we construct elliptic analogues of the file numbers on
skyline boards which in the classical case and in the $q$-case
were first studied by Garsia and Remmel. We show that the elliptic
file numbers satisfy a factorization theorem, extending an analogous
result of Garsia and Remmel. Special cases of the elliptic
file numbers include an elliptic extension
of the unsigned Stirling numbers of the first kind, an elliptic
enumeration of labeled forests of rooted trees
which extends the work of Goldman and Haglund~\cite{GH} to the elliptic
setting, and again, elliptic extensions of $r$-restricted refinements
of these numbers.

At this point, we would like to explain our motivation for this work.
What is the reason for ``going elliptic''? People working in
enumerative combinatorics often encounter identities involving
hypergeometric series (or more generally, special functions).
On one hand, the theory of hypergeometric series
serves as a tool for solving combinatorial problems,
and on the other hand, combinatorial models can be used to prove
or explain hypergeometric series identities. This phenomenon similarly
also applies to other areas, such as algebra and geometry, or
mathematics and physics. Problems which lay in the interface of
two or more areas are often challenging and particularly interesting.
The development of tools which combine two different areas is promising
and may ultimately lead to a better understanding of both
respective theories.
Now, just as in many classical instances
where hypergeometric series emerge from problems in
enumerative combinatorics, $q$-hypergeometric or basic hypergeometric
series frequently emerge from problems in enumerative combinatorics
involving some $q$-statistics (which is a refined counting).
On the contrary, given specific basic hypergeometric series identities,
one can ask for suitable models where a combinatorial
explanation can be provided. Now this is not the end of the story.
There is in fact a natural hierarchy of hypergeometric series:
\emph{rational} (i.e. ``ordinary''),
\emph{trigonometric} (or ``$q$'', i.e., ``basic''),
and \emph{elliptic} (or ``$q,p$'', balanced and well-poised)
hypergeometric series. 
Not such a long time ago, people working in hypergeometric series
have realized that
the following three term relation of theta functions,
\begin{equation}
\ta(xy,x/y,uv,u/v;p)-\ta(xv,x/v,uy,u/y;p)
=\frac uy\,\ta(yv,y/v,xu,x/u;p)
\end{equation}
(see Section~\ref{sec:ellan} for the notation),
can be used as a key relation to build up a theory of identities
for series involving theta functions (see \cite{FT,Sp,SZ,Wa}
and the discussion in \cite[Chapter~11]{GR}),
analogous to the classical theories of
hypergeometric and of basic hypergeometric series.
This can be compared to the hierarchy 
of meromorphic solutions of the Yang--Baxter equation,
being rational, trigonometric, or elliptic, described in \cite{Ji}.
(Whereas trigonometric functions are periodic, elliptic functions
are doubly periodic. This cannot be pushed further, since by
Liouville's theorem, meromorphic functions on $\C$ with three
independent periods are constant.)
In the last three decades the theory of theta and elliptic
hypergeometric functions has been developed to a great extent
from various points of view (including integrable systems,
special functions, and biorthogonal functions).
But despite of their original appearance in
lattice models in statistical mechanics \cite{DJKMO},
elliptic hypergeometric series have not been studied much yet
from a combinatorial point of view. In \cite{Schl0}, one of us
enumerated lattice paths with respect to suitable elliptic weight functions.
This led to a combinatorial proof of the Frenkel--Turaev ${}_{10}V_9$
summation formula, a fundamental identity in the theory
of elliptic hypergeometric series. Further results from \cite{Schl0}
included the closed form elliptic enumeration of nonintersecting
lattice paths. Similar elliptic weights have also
subsequently been used in \cite{BGR} and in \cite{Be} to enumerate
dimers and lozenge tilings. In the quest of trying to better understand
the connection between combinatorics and elliptic hypergeometric series
it is just natural to look for suitable general combinatorial models
where elliptic weights can be utilized. The goal is to
obtain explicit results that generalize
the existing ones to the elliptic level, but which are still ``attractive''
(such as results involving closed form products).

Although we were successful in our aim to extend the classical rook theory
to the elliptic setting, we were somehow disappointed to
find that elliptic hypergeometric series summations did not come out
in this study. (See the discussion in Subsection~\ref{subsecehs}.)
This is probably inherent to the model. Already in the well-studied
$q$-case the only basic hypergeometric series identities that
arise in rook theory are those of Karlsson--Minton type
(see \cite{H0}, and again Subsection~\ref{subsecehs}).
Nevertheless, we are able for the first time to present a bunch
of elliptic extensions of special numbers (Stirling, Lah, etc.).
As these are new (and this territory opens up a new theory,
namely of {\em elliptic special numbers}),
we have put quite some attention to them in our exposition.
A reader who is not so much interested in all of these
new special numbers, is advised to mainly focus on the material
leading to the main result of this paper, the product formula
in Theorem~\ref{thm:elptprod} and to look at one or two specific
examples of applications. Besides that, a reading of
Section~\ref{sec:fp} in the end also serves to give an idea about
the ``big story''.

The paper is outlined as follows.
In Section~\ref{section:rook} we give a gentle introduction to rook theory,
state the product formula and a recursion for $q$-rook numbers.
In Section~\ref{sec:ellan}, after introducing elliptic analogues
of numbers and their properties, we generalize the results
in Section~\ref{section:rook} to the elliptic case and highlight
several special cases of interest. In Section \ref{sec:j}, we work out
an elliptic extension
of the $\J$-attacking rook model. We consider
elliptic file numbers in Section~\ref{sec:file} and  highlight several
special cases of interest there as well.
Lastly, we list some topics for future investigation in
Section~\ref{sec:fp}.

We would like to thank Ole Warnaar for reading an earlier version
of this manuscript and suggesting valuable improvements.


\section{Introduction to rook theory}\label{section:rook}

Let $\mathbb N$ denote the set of positive integers and  $\mathbb N_0$
the set of nonnegative integers.
We consider a \emph{board} to be a finite subset of the
$\mathbb N\times\mathbb N$ grid, and label the columns from left to
right with $1,2,3,\dots$, and the rows from bottom to top with
$1,2,3,\dots$. We let $(i,j)$ denote the cell in the
$i$-th column from the left and the $j$-th row from the bottom.
If a board has at most $n$ columns and $m$ rows,
we consider it as a subset of the $[n]\times[m]$ grid, where
$[n]=\{1,2,\dots, n\}$ and $[m]=\{1,2,\dots, m\}$.
For technical reasons, in our proofs, we sometimes find it convenient to
extend the $\mathbb N\times \mathbb N$ grid to $\mathbb N\times \mathbb Z$
where cells may have a zero or negative integer row index. 

Let $B(b_1, \dots, b_n)$ denote the set of cells 
\begin{displaymath}
B=B(b_1,\dots, b_n)=\{(i,j)~|~ 1\le i\le n,~ 1 \le j\le b_i\}.
\end{displaymath}
If a board $B$ can be represented by the set $B(b_1, \dots, b_n)$ for 
some nonnegative integer $b_i$'s, then the board $B$ is called 
a \emph{skyline board}. If in addition those $b_i$'s are nondecreasing, 
then the board $B=B(b_1,\dots, b_n)$ is called a \emph{Ferrers board}.

\setlength{\unitlength}{1.2pt}

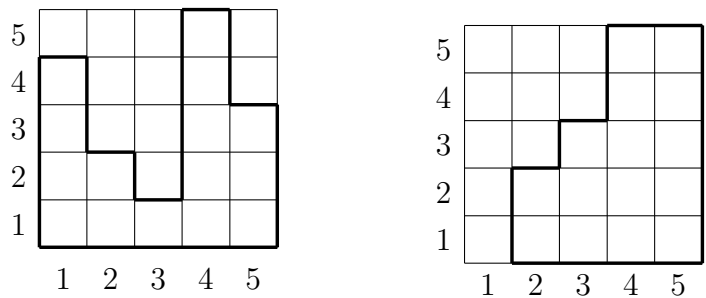
\begin{figure}[ht]
$$
\begin{picture}(85,85)(0,-10)
\multiput(10,0)(0,15){6}{\line(1,0){75}}
\multiput(10,0)(15,0){6}{\line(0,1){75}}
\put(1,64){5}
\put(1,49){4}
\put(1,34){3}
\put(1,19){2}
\put(1,4){1}
\put(15, -13){1}
\put(30,-13){2}
\put(45,-13){3}
\put(60,-13){4}
\put(75,-13){5}
\thicklines \linethickness{1.3pt}
\multiput(10,0)(0,15){1}{\line(0,1){60}}
\multiput(10,60)(0,15){1}{\line(1,0){15}}
\multiput(25,30)(0,15){1}{\line(0,1){30}}
\multiput(25,30)(0,15){1}{\line(1,0){15}}
\multiput(40,15)(0,15){1}{\line(0,1){15}}
\multiput(40,15)(0,15){1}{\line(1,0){15}}
\multiput(55,15)(0,15){1}{\line(0,1){60}}
\multiput(55,75)(0,15){1}{\line(1,0){15}}
\multiput(70,45)(0,15){1}{\line(0,1){30}}
\multiput(70,45)(0,15){1}{\line(1,0){15}}
\multiput(85,0)(0,15){1}{\line(0,1){45}}
\multiput(10,0)(0,15){1}{\line(1,0){75}}
\end{picture}
\qquad \qquad\quad
\begin{picture}(85,80)(0,-5)
\multiput(10,0)(0,15){6}{\line(1,0){75}}
\multiput(10,0)(15,0){6}{\line(0,1){75}}
\put(1,64){5}
\put(1,49){4}
\put(1,34){3}
\put(1,19){2}
\put(1,4){1}
\put(15,-10){1}
\put(30,-10){2}
\put(45,-10){3}
\put(60,-10){4}
\put(75,-10){5}
\thicklines \linethickness{1.3pt}
\multiput(25,0)(0,15){1}{\line(0,1){30}}
\multiput(25,30)(0,15){1}{\line(1,0){15}}
\multiput(40,30)(0,15){1}{\line(0,1){15}}
\multiput(40,45)(0,15){1}{\line(1,0){15}}
\multiput(55,45)(0,15){1}{\line(0,1){30}}
\multiput(55,75)(0,15){1}{\line(1,0){30}}
\multiput(25,0)(0,15){1}{\line(1,0){60}}
\multiput(85,0)(0,15){1}{\line(0,1){75}}
\end{picture}$$
\caption{A skyline board $B(4,2,1,5,3)$ and a Ferrers board $B(0,2,3,5,5)$.}
\end{figure}
We say that we
\emph{place $k$ nonattacking rooks} in $B$ by choosing a $k$-subset of
cells in $B$ such that no two elements have a common coordinate,
that is, no two rooks lie in the same row or in the same column.
Let $\mathcal{N}_{k}(B)$ denote the set of all nonattacking 
placements of $k$ rooks. The \emph{$k$-th rook number} of $B$ is defined by 
$r_k(B)=|\mathcal{N}_k (B)|$. 
To define the $q$-analogue of the rook numbers, we need the concept of
\emph{rook cancellation}. Given a rook placement $P\in\mathcal{N}_{k}(B)$,
a rook in $P$ cancels all the cells to the right in the same row and 
all the cells below it in the same column. Then
Garsia and Remmel \cite{GR} defined the $q$-analogue of the rook numbers
for Ferrers boards by
\begin{displaymath}
r_k (q;B)=\sum_{P\in \mathcal{N}_k(B)}q^{u_B (P)},
\end{displaymath}
where $q$ is an indeterminate and $u_B (P)$
counts the number of cells in $B$ which are neither
cancelled by rooks nor contain any rooks in a
$k$-rook placement $P$.
See Figure~\ref{fig2} for the set of cancelled cells (marked
by thick dots) of a particular placement of four rooks (marked by X's)
on the Ferrers board $B(0,2,3,5,5)$.

\begin{figure}[ht]\label{fig2}
$$\begin{picture}(85,80)(0,-5)
\multiput(10,0)(0,15){6}{\line(1,0){75}}
\multiput(10,0)(15,0){6}{\line(0,1){75}}
\put(1,64){5}
\put(1,49){4}
\put(1,34){3}
\put(1,19){2}
\put(1,4){1}
\put(15, -10){1}
\put(30,-10){2}
\put(45,-10){3}
\put(60,-10){4}
\put(75,-10){5}
\thicklines  \linethickness{1.3pt}
\multiput(25,0)(0,15){1}{\line(0,1){30}}
\multiput(25,30)(0,15){1}{\line(1,0){15}}
\multiput(40,30)(0,15){1}{\line(0,1){15}}
\multiput(40,45)(0,15){1}{\line(1,0){15}}
\multiput(55,45)(0,15){1}{\line(0,1){30}}
\multiput(55,75)(0,15){1}{\line(1,0){30}}
\multiput(25,0)(0,15){1}{\line(1,0){60}}
\multiput(85,0)(0,15){1}{\line(0,1){75}}
\multiput(27,17)(0,15){1}{\line(1,1){11}}
\multiput(27,28)(0,15){1}{\line(1,-1){11}}
\multiput(42,2)(0,15){1}{\line(1,1){11}}
\multiput(42,13)(0,15){1}{\line(1,-1){11}}
\multiput(57,47)(0,15){1}{\line(1,1){11}}
\multiput(57,58)(0,15){1}{\line(1,-1){11}}
\multiput(72,32)(0,15){1}{\line(1,1){11}}
\multiput(72,43)(0,15){1}{\line(1,-1){11}}
\put(30,5){$\bullet$}
\put(60,5){$\bullet$}
\put(75,5){$\bullet$}
\put(45,20){$\bullet$}
\put(60,20){$\bullet$}
\put(75,20){$\bullet$}
\put(60,35){$\bullet$}
\put(75,50){$\bullet$}
\end{picture}$$
\caption{A rook cancellation in $B(0,2,3,5,5)$.}
\end{figure}
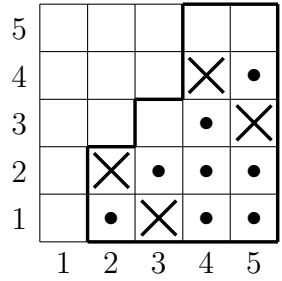

Note that for $B=[n]\times [n]$, a nonattacking rook placement of $n$ rooks
in $B$ corresponds to a permutation of $1,2,\dots, n$.
By considering all the rook placements corresponding to the permutations of $n$ 
numbers, it is not hard to see that 
\begin{equation}\label{mah}
r_n (q;[n]\times [n])=[n]_q!.
\end{equation}
Here the $q$-falling factorial and $q$-factorial are defined by
\begin{equation*}
[n]_q\!\downarrow_k=[n]_q [n-1]_q \dots [n-k+1]_q,
\qquad\text{and}\qquad
[n]_q!=[n]_q\!\downarrow_n
\end{equation*}
with $[n]_q\!\downarrow_0=1$,
respectively, where
\begin{equation*}
[n]_q =\frac{1-q^n}{1-q}
\end{equation*}
is the $q$-number of $n$.
From \eqref{mah} it can be concluded that the rook numbers
satisfy the {\em Mahonian} property
(first defined in \cite{F}), i.e.\ the two statistics, the number of
uncancelled cells of $n$ nonattacking rooks on an $[n]\times [n]$ board
and the number of inversions
of permutations of $n$ elements, have the same distribution.

For a given Ferrers board $B\subset[n]\times\mathbb N$,
let us denote by $B^\infty\subset[n]\times\mathbb Z$ the Ferrers board
obtained by appending below $B$ the infinite board of width $n$. 
For convenience, we denote by $\mathfrak{g}$ the line separating $B$
from the rest of $B^\infty$ and refer to it as the \emph{ground}.
For a rook placement $P$ in $B^\infty$, we let $\text{max}(P)$ denote
the number of rows below the ground in which the lowest rook of $P$
is located. Let $\text{max}(P)=0$ if there are no rooks below
$\mathfrak{g}$ in $P$. Garsia and Remmel~\cite[Equation~(I.11)]{GR}
showed the following identity.

\begin{proposition}\cite[Equation~(I.11)]{GR}\label{prop:q1}
For any Ferrers board $B=B(b_1,\dots, b_n)$,  
\begin{equation}
\frac{1}{1-z}\sum_{P\in \mathcal{N}_n(B^\infty)}z^{\text{max}(P)}q^{u_B (P)}
=\sum_{k\ge 0}z^k [k+b_1]_q [k+b_2 -1]_q \cdots [k+b_n -n+1]_q.
\end{equation}
\end{proposition}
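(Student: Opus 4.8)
The plan is to read off the coefficient of $z^k$ on both sides and then evaluate the resulting weighted rook count by a column-by-column argument of Goldman--Joichi--White type.

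First I would expand $\frac1{1-z}=\sum_{\ell\ge 0}z^\ell$; then the coefficient of $z^k$ on the left-hand side is $\sum q^{u_B(P)}$, the sum taken over those placements $P\in\mathcal N_n(B^\infty)$ having $\text{max}(P)\le k$. These are precisely the nonattacking placements of $n$ rooks on the finite board $B^{(k)}$ obtained from $B$ by appending $k$ full rows of width $n$ below the ground, and for each such $P$ the quantity $u_B(P)$ equals the number of uncancelled cells of $B^{(k)}$ (in $B^\infty$ every uncancelled cell lies weakly above the lowest rook, hence inside $B^{(k)}$, and each column of $B^\infty$ is bounded above, so this count is finite). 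Thus the claim reduces to the purely finite statement
\[
\sum_{P\in\mathcal N_n(B^{(k)})}q^{u(P)}=\prod_{i=1}^n[k+b_i-i+1]_q\qquad(k\ge0),
\]
where $u(P)$ counts the uncancelled cells of $B^{(k)}$.

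To prove this I would place the rooks column by column, from left to right, and check that column $i$ always contributes the factor $[k+b_i-i+1]_q$, whatever the earlier rooks were. After columns $1,\dots,i-1$ have been filled, exactly $i-1$ of the $b_i+k$ rows meeting column $i$ are blocked --- this is where the Ferrers hypothesis $b_1\le\dots\le b_n$ enters, since every earlier rook occupies a row that is also present in column $i$ --- leaving $s_i:=b_i+k-i+1$ admissible rows. If the new rook is put in the $t$-th lowest admissible row, the cells of column $i$ that remain uncancelled in the completed placement are exactly the $s_i-t$ admissible rows above it (later rooks, lying in columns to the right, neither cancel in column $i$ nor change what is blocked there); summing $q^{s_i-t}$ over $t=1,\dots,s_i$ gives $[s_i]_q$. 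Since $[s_i]_q$ does not depend on the first $i-1$ rooks, the iterated sum telescopes into $\prod_{i=1}^n[s_i]_q$, and matching coefficients of $z^k$ finishes the proof.

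The only genuinely delicate points are the bookkeeping below the ground (confirming that ``uncancelled cells'' is the right finite statistic and agrees with the count on the truncation $B^{(\text{max}(P))}$) and the uniformity of the per-column contribution, which rests entirely on the rows being nondecreasing; the degenerate cases in which $n$ nonattacking rooks do not fit are handled automatically, since then the product acquires a vanishing factor $[0]_q$. Everything else is routine.
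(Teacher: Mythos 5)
Your proof is correct and follows essentially the same route the paper takes for this identity (in its elliptic form): extracting the coefficient of $z^k$ reduces the claim to the weighted count of placements with $\text{max}(P)\le k$ on the truncated board, which is then evaluated column by column exactly as in Lemma~\ref{lem:prod} and Equation~\eqref{eqn:max} in the proof of Proposition~\ref{prop:elp1}. Your handling of the delicate points (that the statistic counts all uncancelled cells weakly above the lowest rook, and that the Ferrers condition makes each column's contribution $[k+b_i-i+1]_q$ independent of the earlier rooks) is accurate.
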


Using this result, Garsia and Remmel~\cite[Equation~(1.3)]{GR}
proved the following factorization theorem for $q$-rook numbers
on Ferrers boards which extends the result of Goldman, Joichi and White
in \eqref{eqn:rookthm}.

\begin{proposition}\cite[Equation~(1.3)]{GR}\label{thm:qrookthm}
Let $B=B(b_1,\dots, b_n)\subset [n]\times \mathbb N$ be
a Ferrers board. Then 
\begin{equation}
\prod_{i=1}^n [z+b_i-i+1]_q =
\sum_{k=0}^n r_{n-k}(q;B)\,[z]_q\!\downarrow_k.\label{eqn:qrookthm}
\end{equation} 
\end{proposition}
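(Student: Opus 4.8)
The plan is to derive the product formula \eqref{eqn:qrookthm} from Proposition~\ref{prop:q1}, following Garsia and Remmel: one decomposes nonattacking placements on $B^\infty$ according to how many rooks lie strictly below the ground $\mathfrak{g}$, and matches the resulting generating functions.

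\emph{Step 1 (a weighted decomposition of $\mathcal{N}_n(B^\infty)$).} Any $P\in\mathcal{N}_n(B^\infty)$ has exactly one rook in each of the $n$ columns. Let $j$ be the number of rooks of $P$ lying strictly below $\mathfrak{g}$; then the remaining $n-j$ rooks lie in $B$ and form a placement $Q\in\mathcal{N}_{n-j}(B)$, while the $j$ below-ground rooks occupy, one each, the $j$ columns not used by $Q$. The first claim is that the weight factors:
\[
z^{\max(P)}\,q^{u_B(P)}=q^{u_B(Q)}\cdot z^{\max(P')}\,q^{v(P')},
\]
where $P'$ is the sub-placement of the $j$ below-ground rooks and $v(P')$ counts the cells of $B^\infty$ strictly below $\mathfrak{g}$ in those $j$ columns that are neither occupied nor cancelled within $P'$. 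Indeed, a rook below $\mathfrak{g}$ cancels only cells that are themselves below $\mathfrak{g}$, so adjoining the below-ground rooks alters neither the set of rooks in $B$ nor the set of cancelled cells of $B$; hence the cells of $B$ contribute precisely $q^{u_B(Q)}$. Conversely, a column carrying an above-ground rook has all of its below-ground cells cancelled by that rook, and row-cancellation from above-ground rooks never reaches below $\mathfrak{g}$; so the only uncancelled below-ground cells are those counted by $v(P')$. Finally $\max(P)=\max(P')$ by definition.

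\emph{Step 2 (the below-ground generating function, via the empty board).} For a fixed choice of $j$ columns, the sum $\sum_{P'}z^{\max(P')}q^{v(P')}$ is independent of which $j$ columns are chosen, because cancellation below $\mathfrak{g}$ depends only on the left-to-right order of the occupied columns. Relabelling those columns $1,\dots,j$, the region below $\mathfrak{g}$ together with $P'$ is exactly a copy of $(B(0,\dots,0))^\infty$ with its natural statistics, so Proposition~\ref{prop:q1} applied to the empty Ferrers board of width $j$ gives
\[
\sum_{P'}z^{\max(P')}\,q^{v(P')}=(1-z)\sum_{k\ge 0}z^k\,[k]_q[k-1]_q\cdots[k-j+1]_q=(1-z)\sum_{k\ge 0}z^k\,[k]_q\!\downarrow_k \big|_{k\mapsto\,\text{as written}} .
\]
(More plainly: the sum equals $(1-z)\sum_{k\ge0}z^k\,[k]_q[k-1]_q\cdots[k-j+1]_q$.) Summing Step~1 over all $Q$, which produces $r_{n-j}(q;B)$, we obtain
\[
\sum_{P\in\mathcal{N}_n(B^\infty)}z^{\max(P)}\,q^{u_B(P)}=(1-z)\sum_{j=0}^n r_{n-j}(q;B)\sum_{k\ge 0}z^k\,[k]_q[k-1]_q\cdots[k-j+1]_q .
\]

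\emph{Step 3 (comparison and polynomiality).} Inserting this into Proposition~\ref{prop:q1} for $B$ and cancelling the common factor $1-z$ yields
\[
\sum_{k\ge 0}z^k\prod_{i=1}^n[k+b_i-i+1]_q=\sum_{k\ge 0}z^k\sum_{j=0}^n r_{n-j}(q;B)\,[k]_q[k-1]_q\cdots[k-j+1]_q .
\]
Comparing coefficients of $z^k$ shows that $\prod_{i=1}^n[z+b_i-i+1]_q=\sum_{j=0}^n r_{n-j}(q;B)\,[z]_q\!\downarrow_j$ holds for every $z=k\in\N_0$. Both sides are equivalent to polynomials in $q^z$ of degree at most $n$ (use $[z+c]_q=[c]_q+q^c[z]_q$ on the left, and note that $[z]_q\!\downarrow_k$ is a polynomial of degree $k$ in $q^z$ with nonvanishing leading coefficient, so $\{[z]_q\!\downarrow_k\}_{k=0}^n$ is a basis on the right), and they agree at the infinitely many values $z=0,1,2,\dots$; hence they agree identically, which is \eqref{eqn:qrookthm}. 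The one delicate point is Step~1: one must follow the rook cancellation carefully across the ground $\mathfrak{g}$ to see that the contribution of $B$ and that of the region below $\mathfrak{g}$ decouple, and that the latter is insensitive to the actual column labels; the remainder is formal generating-function manipulation built on Proposition~\ref{prop:q1}.
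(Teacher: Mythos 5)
Your argument is correct and is essentially the proof the paper points to: split the $n$-rook placements on the extended board into their part in $B$ and their part below the ground, identify the below-ground contribution with the $q$-falling factorial, and finish by comparing coefficients of $z^k$ and polynomiality in $q^z$ — the same two-way count that the paper carries out in the elliptic setting in Theorem~\ref{thm:elptprod} (via Lemma~\ref{lem:prod} and the $k=z$ case of the analogue of Proposition~\ref{prop:q1}). Your only twist, evaluating the below-ground generating function by applying Proposition~\ref{prop:q1} to the empty board of width $j$ instead of computing $[z]_q\!\downarrow_j$ directly on the finite extended board, is a harmless repackaging of the same idea.
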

We recover \eqref{eqn:rookthm} when $q\to 1$. 
By distinguishing whether there is a rook
in the last column or not, Garsia and Remmel also showed the following recursion
\cite[Theorem~1.1]{GR}. 
\begin{proposition}\cite[Theorem~1.1]{GR}\label{thm:recur}
Let $B$ be a Ferrers board of height at most $m$ and let $B\cup m$
denote the board obtained by adding a column of length $m$ to $B$.
Then for any nonnegative integer $k$, we have 
\begin{equation}
r_k(q;B\cup m)=q^{m-k}r_k (q;B)+[m-k+1]_q\,r_{k-1}(q;B).
\end{equation}
\end{proposition}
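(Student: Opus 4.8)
The plan is to establish the recursion combinatorially by partitioning the set $\mathcal{N}_k(q;B\cup m)$ of nonattacking placements of $k$ rooks on $B\cup m$ according to whether or not the newly adjoined column of length $m$ contains a rook. First I would fix notation: write $C$ for the new column, so that $B\cup m = B \sqcup C$ as a set of cells, and recall that $B$ has height at most $m$, hence the new column is at least as tall as every column of $B$ and sits to the right of all of them (so in the Ferrers board $B\cup m$ it is the last column). Throughout, I would keep careful track of the cancellation statistic $u$, remembering that a rook cancels the cells weakly to the right in its row and strictly below it in its column, and that $u_{B\cup m}(P)$ counts cells of $B\cup m$ that are neither occupied nor cancelled.

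The first case: $P$ has no rook in column $C$. Then $P$ restricts to a placement $P'\in\mathcal{N}_k(B)$, and this restriction is a bijection onto $\mathcal{N}_k(B)$ (any placement on $B$ extends uniquely by placing nothing in $C$). The key bookkeeping step is to compare $u_{B\cup m}(P)$ with $u_B(P')$. The cells of $C$ contribute: a cell $(\,\cdot\,,j)$ of $C$ is cancelled exactly when some rook of $P'$ occupies row $j$ (there are $k$ such rows, all $\le m$ since the rows used lie in $B$ which has height $\le m$), and it is occupied never; so the number of uncancelled cells of $C$ is $m-k$, and no cell of $B$ changes its status. Hence $u_{B\cup m}(P) = u_B(P') + (m-k)$, which yields the term $q^{m-k} r_k(q;B)$.

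The second case: $P$ has a rook in column $C$, say in row $j$ with $1\le j\le m$. Deleting this rook and the column $C$ gives $P''\in\mathcal{N}_{k-1}(B)$; conversely, given $P''\in\mathcal{N}_{k-1}(B)$, the rook in $C$ may be placed in any row $j$ of $C$ that is not already occupied by a rook of $P''$, and there are $m-(k-1)=m-k+1$ such choices. The bookkeeping: removing column $C$ together with its rook, the cells of $B$ lose exactly the cancellations coming from that rook's row — but wait, that rook cancels cells in $C$ only (to its right there is nothing, and below it only cells of $C$), so in fact the cell-statuses of $B$ are governed entirely by $P''$, giving $u_B(P'') $ uncancelled cells in $B$. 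Within $C$: cells below row $j$ are cancelled by the rook in $C$; cell $j$ is occupied; among the cells above row $j$, a cell in row $i>j$ is cancelled iff row $i$ is occupied by a rook of $P''$. So I would sum $q^{(\text{number of uncancelled cells of }C)}$ over the choice of $j$ and show, by a short geometric-series manipulation, that $\sum_{j} q^{\#\{\text{uncancelled cells of }C\}} = [m-k+1]_q$; this is precisely the standard computation that the $q$-count of ways to insert one rook into a column of height $m$ avoiding $k-1$ forbidden rows is $[m-k+1]_q$. Combining gives the term $[m-k+1]_q\, r_{k-1}(q;B)$.

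The main obstacle is the cancellation bookkeeping in the second case: one must be scrupulous that the rook placed in the new (tallest) column cancels nothing in $B$ and that the forbidden rows for its placement are exactly the $k-1$ rows occupied by $P''$, so that the weighted sum over $j$ telescopes to $[m-k+1]_q$ rather than to some shifted or twisted variant. Once that is handled correctly, adding the two cases gives $r_k(q;B\cup m) = q^{m-k} r_k(q;B) + [m-k+1]_q\, r_{k-1}(q;B)$, as claimed. (An alternative, purely formal route would be to derive the recursion from the product formula in Proposition~\ref{thm:qrookthm} by comparing coefficients after multiplying both sides by $[z+m-n]_q$; but the combinatorial argument above is cleaner and is the one I would present.)
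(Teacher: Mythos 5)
Your proposal is correct and matches the approach the paper takes (and attributes to Garsia--Remmel): split the placements on $B\cup m$ according to whether the new, tallest, last column contains a rook, observe that in the empty case the $m-k$ uncancelled cells of the new column contribute $q^{m-k}$, and in the occupied case note that the new rook cancels nothing in $B$ and that summing over its $m-k+1$ admissible rows telescopes to $[m-k+1]_q$. The bookkeeping you describe (exponents $m-k, m-k-1,\dots,0$ as the rook moves down through the available rows) is exactly the computation needed, so nothing further is required.
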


Now we are ready to turn to the elliptic setting.


\section{Elliptic analogues}\label{sec:ellan}

A function is defined to be {\em elliptic} if it is meromorphic and
doubly periodic. It is well known (cf.\ e.g.\ \cite{W}) that
elliptic functions can be built from quotients of
theta functions.

Define a \emph{modified Jacobi theta function} with argument $x$ and nome $p$ by
$$\theta(x;p)= \prod_{j\ge 0}((1-p^j x)(1- p^{j+1}/x)),\qquad
\theta(x_1,\dots, x_m;p)=\prod_{k=1}^m \theta(x_k;p),$$
where $x,x_1,\dots, x_m\ne 0$, $|p|<1$. 
Further, we define the {\em theta shifted factorial}
(or {\em $q,p$-shifted factorial}) by
\begin{equation*}
(a;q,p)_n = \begin{cases}
\prod^{n-1}_{k=0} \theta (aq^k;p),&\quad n = 1, 2, \ldots\,,\cr
1,&\quad n = 0,\cr
1/\prod^{-n-1}_{k=0} \theta (aq^{n+k};p),&\quad n = -1, -2, \ldots,
\end{cases}
\end{equation*}
together with
\begin{equation*}
(a_1, a_2, \ldots, a_m;q, p)_n = \prod^m_{k=1} (a_k;q,p)_n,
\end{equation*}
for compact notation.
For $p=0$ we have  $\theta (x;0) = 1-x$ and, hence, $(a;q, 0)_n = (a;q)_n$
is a {\em $q$-shifted factorial} in base $q$ (see \cite{GRhyp} for
classical $q$-series notation, \cite[Chapter~11]{GRhyp} treats the
elliptic case).
The parameters $q$ and $p$
in $(a;q,p)_n$ are called the {\em base} and {\em nome}, respectively. 

The modified Jacobi theta functions satisfy the following
basic properties which are essential in the theory of elliptic
hypergeometric series:
\begin{subequations}
\begin{equation}\label{tif}
\ta(x;p)=-x\,\ta(1/x;p),
\end{equation}
\begin{equation}\label{p1id}
\ta(px;p)=-\frac 1x\,\ta(x;p),
\end{equation}
and the {\em addition formula}
\begin{equation}\label{addf}
\ta(xy,x/y,uv,u/v;p)-\ta(xv,x/v,uy,u/y;p)
=\frac uy\,\ta(yv,y/v,xu,x/u;p)
\end{equation}
\end{subequations}
(cf.\ \cite[p.~451, Example 5]{WhW}).

As a matter of fact, the three-term relation in \eqref{addf},
containing four variables
and four factors of theta functions in each term, is the ``smallest''
addition formula connecting products of theta functions with general
arguments.
Note that in the theta function $\theta(x;p)$ we cannot let $x\to 0$
(unless we first let $p\to 0$) for $x$ is a pole of infinite order.
This is the reason why elliptic analogues of
$q$-series identities usually contain many parameters.

The elliptic identities we shall consider all involve terms
which are elliptic (with the same periods) in all of its parameters 
(see e.g.\ Remark~\ref{rem:totalell}).
Spiridonov~\cite{Sp} refers to such multivariate functions as
{\em totally elliptic}, and they are by nature
{\em well-poised} and {\em balanced} (see also \cite[Chapter~11]{GRhyp}).

Inspired by earlier work of the first author regarding
weighted lattice paths and elliptic binomial coefficients
\cite{Schl0,Schl1}, we now define the {\em elliptic weights}
$w_{a,b;q,p}(k)$ and $W_{a,b;q,p}(k)$,
depending on two independent parameters $a$ and $b$, base $q$,
nome $p$, and integer parameter $k$ by
\begin{subequations}\label{def:elpwt}
\begin{align}
w_{a,b;q,p}(k)&=\frac{\theta(aq^{2k+1},bq^{k},aq^{k-2}/b;p)}
{\theta(aq^{2k-1},bq^{k+2},aq^k /b;p)}q,\label{def:smallelpwt}\\\intertext{and}
W_{a,b;q,p}(k)&=
\frac{\theta(aq^{1+2k},bq,bq^2,aq^{-1}/b,a/b;p)}
{\theta(aq,bq^{k+1},bq^{k+2},aq^{k-1}/b, aq^k /b;p)}q^k,\label{def:bigelpwt}
\end{align}
respectively. Observe that if $k$ is a positive integer, Equations
\eqref{def:smallelpwt} and \eqref{def:bigelpwt} imply that
\begin{equation}
W_{a,b;q,p}(k)=\prod_{j=1}^k w_{a,b;q,p}(j).
\end{equation}
\end{subequations}
We refer to the $w_{a,b;q,p}(k)$ as {\em small} weights
and to the $W_{a,b;q,p}(k)$ as {\em big} weights. 
Note that the weights $w_{a,b;q,p}(k)$ and $W_{a,b;q,p}(k)$ also can be
defined for arbitrary (complex) $k$ which is clear from the definition.

Observe that
\begin{subequations}
\begin{equation}\label{wshift}
w_{a,b;q,p}(k+n)=w_{aq^{2k},bq^k;q,p}(n),
\end{equation}
and 
\begin{equation}\label{Wshift}
W_{a,b;q,p}(k+n)=W_{a,b;q,p}(k)\,
W_{aq^{2k},bq^k;q,p}(n),
\end{equation}
\end{subequations}
for all $k$ and $n$,
which are elementary identities we frequently make use of.

\begin{remark}\label{rem:totalell}
The small weight $w_{a,b;q,p}(k)$ (and so the big one) is indeed
elliptic in its parameters (i.e., totally elliptic).
If we write $q=e^{2\pi i\sigma}$,
$p=e^{2\pi i\tau}$, $a=q^\alpha$ and $b=q^\beta$ with complex $\sigma$,
$\tau$, $\alpha$, $\beta$ and $k$, then the small weight
$w_{a,b;q,p}(k)$ is clearly periodic in $\alpha$ with period $\sigma^{-1}$.
A simple computation involving \eqref{p1id} further shows that
$w_{a,b;q,p}(k)$ is also periodic in $\alpha$ with period $\tau\sigma^{-1}$.
The same applies to $w_{a,b;q,p}(k)$ as a function in $\beta$ (or $k$)
with the same two periods $\sigma^{-1}$ and $\tau\sigma^{-1}$.
\end{remark}

\begin{remark}
For $p\to 0$, the small and big weights reduce to
\begin{subequations}\label{def:qwt}
\begin{align}
w_{a,b;q}(k)&=\frac{(1-aq^{2k+1})(1-bq^{k})(1-aq^{k-2}/b)}
{(1-aq^{2k-1})(1-bq^{k+2})(1-aq^k /b)}q,\label{def:smallqwt}\\\intertext{and}
W_{a,b;q}(k)&=
\frac{(1-aq^{1+2k})(1-bq)(1-bq^2)(1-aq^{-1}/b)(1-a/b)}
{(1-aq)(1-bq^{k+1})(1-bq^{k+2})(1-aq^{k-1}/b)(1-aq^k /b)}q^k,\label{def:bigqwt}
\end{align}
\end{subequations}
respectively.
In the $a,b;q$-weights in \eqref{def:qwt},
we may let $b\to 0$ (or $b\to\infty$) to obtain ``$a,0;q$-weights'',
or in short, ``$a;q$-weights'':
\begin{equation}\label{def:aqwt}
w_{a;q}(k)=\frac{(1-aq^{2k+1})}{(1-aq^{2k-1})}q^{-1},\qquad\text{and}\qquad
W_{a;q}(k)=\frac{(1-aq^{1+2k})}{(1-aq)}q^{-k}.
\end{equation}
Note that by writing $q=e^{ix}$ and $a=e^{i(2c+1)x}$, $c\in\mathbb N$,
the $a;q$-weights
can be written as quotients of Chebyshev polynomials
of the second kind.

Also, in \eqref{def:qwt}, we may let $a\to 0$ (or $a\to\infty$) to obtain
``$0,b;q$-weights''.
Importantly, if in \eqref{def:qwt} we first let $b\to 0$ and then
$a\to\infty$ (or, equivalently, first let $a\to 0$ and then $b\to 0$),
we obtain the familiar $q$-weights
\begin{equation}
w_q(k)=q\qquad\text{and}\qquad W_q(k)=q^k,
\end{equation}
respectively.
\end{remark}

Next, for a variable $z$, 
we define an \emph{elliptic number} of $z$ by
\begin{equation}\label{elln}
[z]_{a,b;q,p}=\frac{\theta(q^z, aq^z, bq^2, a/b;p)}
{\theta(q,aq,bq^{z+1},aq^{z-1}/b;p)}.
\end{equation}

Using the addition formula for theta functions~\eqref{addf},
it is not difficult to verify that the thus defined elliptic numbers satisfy
\begin{subequations}
\begin{equation}\label{recelln}
[z]_{a,b;q,p} = [z-1]_{a,b;q,p}+W_{a,b;q,p}(z-1).
\end{equation}
In case $z=n$ is a nonnegative integer, \eqref{recelln} constitutes a
recursion which, together with $W_{a,b;q,p}(0)=1$,
uniquely defines any elliptic number
$[n]_{a,b;q,p}$.
More generally, by \eqref{addf} we have the following useful identity
\begin{equation}\label{recellny}
[z]_{a,b;q,p} = [y]_{a,b;q,p}+W_{a,b;q,p}(y)[z-y]_{aq^{2y},bq^y;q,p}
\end{equation}
\end{subequations}
which reduces to \eqref{recelln} for $y=z-1$.

\begin{remark}
In \cite{Schl1}, the first author, 
in analogy to the $q$-binomial coefficients 
\begin{equation*}
\begin{bmatrix}n\\k\end{bmatrix}_q
:=\frac{(q^{1+k};q)_{n-k}}
{(q;q)_{n-k}}=\frac{[n]_q!}{[k]_q![n-k]_q!},
\end{equation*}
defined the elliptic binomial coefficients 
\begin{equation}\label{ellbin}
\begin{bmatrix}n\\k\end{bmatrix}_{a,b;q,p}:=
\frac{(q^{1+k},aq^{1+k},bq^{1+k},aq^{1-k}/b;q,p)_{n-k}}
{(q,aq,bq^{1+2k},aq/b;q,p)_{n-k}}.
\end{equation}
In \cite{Schl1} the elliptic binomial coefficients in \eqref{ellbin}
were shown to satisfy an elliptic binomial theorem involving
``elliptic commuting'' variables.
They were also shown to satisfy a nice recursion, namely
\begin{subequations}\label{wbineq}
\begin{align}\label{recu}
&\begin{bmatrix}0\\0\end{bmatrix}_{a,b;q,p}=1,\qquad
\begin{bmatrix}n\\k\end{bmatrix}_{a,b;q,p}=0
\qquad\text{for\/ $n\in\N_0$, and\/
$k\in-\N$ or $k>n$},\\
\intertext{and}
\label{recw}
&
\begin{bmatrix}n+1\\k\end{bmatrix}_{a,b;q,p}=
\begin{bmatrix}n\\k\end{bmatrix}_{a,b;q,p}
+\begin{bmatrix}n\\k-1\end{bmatrix}_{a,b;q,p}
\,W_{aq^{k-1},bq^{2k-2};q,p}(n+1-k)
\qquad \text{for $n,k\in\N_0$}.
\end{align}
\end{subequations}
The recurrence in \eqref{recw} is a consequence of the
addition formula \eqref{addf}.

On the combinatorial side, the elliptic binomial coefficient
in \eqref{ellbin}
can be interpreted in terms of weighted lattice paths in $\mathbb Z^2$ 
(see \cite{Schl0}).
In fact, \eqref{ellbin} is the area generating function for
paths starting in $(0,0)$ and ending in $(k,n-k)$ composed of unit steps going
north or east only, when the weight of each cell
(with north-east corner $(s,t)$) ``covered'' by the path
is defined to be $w_{aq^{s-1},bq^{2s-2};q,p}(t)$.
Then it can be shown that the sum of weighted areas below the paths 
satisfies the same recursion \eqref{recw} by distinguishing the 
last step of the path which is either vertical or horizontal.
The elliptic number $[n]_{a,b;q,p}$ is nothing but a short-hand notation for
\begin{equation*}
[n]_{a,b;q,p}=\begin{bmatrix}n\\1\end{bmatrix}_{a,b;q,p},
\end{equation*}
the weighted enumeration of all paths starting in 
$(0,0)$ and ending in $(1,n-1)$.
Note that the elliptic binomial coefficients are in general \emph{not} symmetric with respect to replacing
$k$ by $n-k$. However, the $a;q$-binomial coefficients
\begin{equation}\label{aqbin}
\begin{bmatrix}n\\k\end{bmatrix}_{a;q}:=
\frac{(q^{1+k},aq^{1+k};q)_{n-k}}
{(q,aq;q)_{n-k}}q^{k(k-n)}
\end{equation}
obtained from \eqref{ellbin} by formally letting $p\to 0$ followed by
$b\to 0$, are symmetric, i.e., they satisfy
\begin{equation*}
\begin{bmatrix}n\\k\end{bmatrix}_{a;q}=
\begin{bmatrix}n\\n-k\end{bmatrix}_{a;q},
\end{equation*}
and this is a reason for the $a;q$-case being special (see e.g.\
Prop.~\ref{prop:aqrect}). The
$q$-binomial coefficient is recovered by letting $a\to\infty$.

\end{remark}

Now we are ready to develop an elliptic analogue of the $q$-rook theory.
We employ the same rook cancellation as Garsia and Remmel
considered in the $q$-case,
i.e., a rook cancels all the cells to the right and below of it.
However, as already indicated, our definition of an elliptic-weighted
rook number requires a refined statistic depending on the
specific locations of the uncancelled cells.

\begin{definition}
Given a Ferrers board $B=B(b_1,\dots, b_n)$,
we define the elliptic analogue of the
$k$-th rook number by
\begin{subequations}
\begin{equation}
r_k(a,b;q,p;B)=\sum_{P\in \mathcal{N}_k(B)}\text{wt}(P),\label{def:elptrook}
\end{equation}
with 
\begin{equation}\label{def:wt}
\text{wt}(P)=\prod_{(i,j)\in \text{U}_B(P)}
w_{a,b;q,p}\big(i-j-r_{(i,j)}(P)\big),
\end{equation}
\end{subequations}
where the elliptic weight $w_{a,b;q,p}(l)$ of an integer $l$ is
defined in \eqref{def:smallelpwt}, $\text{U}_B(P)$ is the set
of cells in $B$ which are neither
cancelled by rooks nor contain any rooks of $P$, and $r_{(i,j)}(P)$
is the number of rooks in $P$ which are in the north-west region of $(i,j)$. 
\end{definition}

\begin{example} Consider a Ferrers board $B=B(3,3,3)$ and let
$P$ be the placement of two rooks in $(1,3)$ and $(3,1)$ in $B$. 
\begin{figure}[ht]
$$\begin{picture}(55,50)(0,-5)
\multiput(10,0)(0,15){4}{\line(1,0){45}}
\multiput(10,0)(15,0){4}{\line(0,1){45}}
\multiput(12,32)(0,15){1}{\line(1,1){11}}
\multiput(12,43)(0,15){1}{\line(1,-1){11}}
\multiput(42,2)(0,15){1}{\line(1,1){11}}
\multiput(42,13)(0,15){1}{\line(1,-1){11}}
\put(1,34){3}
\put(1,19){2}
\put(1,4){1}
\put(15, -10){1}
\put(30,-10){2}
\put(45,-10){3}
\put(15,5){$\bullet$}
\put(15,20){$\bullet$}
\put(30,35){$\bullet$}
\put(45,35){$\bullet$}
\end{picture}$$
\caption{A rook placement in $(1,3)$ and $(3,1)$.}
\end{figure}
Then the set of uncancelled cells $U_B(P)$ is 
$\{ (2,1), (2,2), (3,2)\}$. Note that for all the uncancelled cells
$(i,j)\in U_B(P)$, $r_{(i,j)}(P)=1$ due to the rook in $(1,3)$.
Then $wt(P)$ is 
\begin{align*}
wt(P)
&= w_{a,b;q,p}(2-1-1)\cdot  w_{a,b;q,p}(2-2-1)
\cdot w_{a,b;q,p}(3-2-1)\\
&= \frac{\theta(aq^{-1},bq^{-1},aq^{-3}/b;p)}{\theta(aq^{-3},bq,aq^{-1}/b;p)}\cdot
\frac{\theta(aq,b,aq^{-2}/b;p)^2}{\theta(aq^{-1},bq^{2},a/b;p)^2}q^3.
\end{align*}

\setlength{\unitlength}{1.1pt}
If we place three rooks in all possible ways in $B$ and compute
$r_3(a,b;q,p;B)$, then 
\begin{figure}[ht]
$$\begin{array}{cccccc}
\begin{picture}(55,50)(0,-5)
\multiput(10,0)(0,15){4}{\line(1,0){45}}
\multiput(10,0)(15,0){4}{\line(0,1){45}}
\multiput(12,32)(0,15){1}{\line(1,1){11}}
\multiput(12,43)(0,15){1}{\line(1,-1){11}}
\multiput(27,17)(0,15){1}{\line(1,1){11}}
\multiput(27,28)(0,15){1}{\line(1,-1){11}}
\multiput(42,2)(0,15){1}{\line(1,1){11}}
\multiput(42,13)(0,15){1}{\line(1,-1){11}}
\put(1,34){3}
\put(1,19){2}
\put(1,4){1}
\put(15, -10){1}
\put(30,-10){2}
\put(45,-10){3}
\put(15,5){$\bullet$}
\put(30,5){$\bullet$}
\put(15,20){$\bullet$}
\put(45,20){$\bullet$}
\put(30,35){$\bullet$}
\put(45,35){$\bullet$}
\end{picture}
&
\begin{picture}(55,50)(0,-5)
\multiput(10,0)(0,15){4}{\line(1,0){45}}
\multiput(10,0)(15,0){4}{\line(0,1){45}}
\multiput(12,32)(0,15){1}{\line(1,1){11}}
\multiput(12,43)(0,15){1}{\line(1,-1){11}}
\multiput(42,17)(0,15){1}{\line(1,1){11}}
\multiput(42,28)(0,15){1}{\line(1,-1){11}}
\multiput(27,2)(0,15){1}{\line(1,1){11}}
\multiput(27,13)(0,15){1}{\line(1,-1){11}}
\put(1,34){3}
\put(1,19){2}
\put(1,4){1}
\put(15, -10){1}
\put(30,-10){2}
\put(45,-10){3}
\put(15,5){$\bullet$}
\put(45,5){$\bullet$}
\put(15,20){$\bullet$}
\put(30,35){$\bullet$}
\put(45,35){$\bullet$}
\end{picture}
&
\begin{picture}(55,50)(0,-5)
\multiput(10,0)(0,15){4}{\line(1,0){45}}
\multiput(10,0)(15,0){4}{\line(0,1){45}}
\multiput(27,32)(0,15){1}{\line(1,1){11}}
\multiput(27,43)(0,15){1}{\line(1,-1){11}}
\multiput(12,17)(0,15){1}{\line(1,1){11}}
\multiput(12,28)(0,15){1}{\line(1,-1){11}}
\multiput(42,2)(0,15){1}{\line(1,1){11}}
\multiput(42,13)(0,15){1}{\line(1,-1){11}}
\put(1,34){3}
\put(1,19){2}
\put(1,4){1}
\put(15, -10){1}
\put(30,-10){2}
\put(45,-10){3}
\put(15,5){$\bullet$}
\put(30,5){$\bullet$}
\put(30,20){$\bullet$}
\put(45,20){$\bullet$}
\put(45,35){$\bullet$}
\end{picture}
&
\begin{picture}(55,50)(0,-5)
\multiput(10,0)(0,15){4}{\line(1,0){45}}
\multiput(10,0)(15,0){4}{\line(0,1){45}}
\multiput(42,32)(0,15){1}{\line(1,1){11}}
\multiput(42,43)(0,15){1}{\line(1,-1){11}}
\multiput(12,17)(0,15){1}{\line(1,1){11}}
\multiput(12,28)(0,15){1}{\line(1,-1){11}}
\multiput(27,2)(0,15){1}{\line(1,1){11}}
\multiput(27,13)(0,15){1}{\line(1,-1){11}}
\put(1,34){3}
\put(1,19){2}
\put(1,4){1}
\put(15, -10){1}
\put(30,-10){2}
\put(45,-10){3}
\put(45,20){$\bullet$}
\put(30,20){$\bullet$}
\put(15,5){$\bullet$}
\put(45,5){$\bullet$}
\end{picture}
&
\begin{picture}(55,50)(0,-5)
\multiput(10,0)(0,15){4}{\line(1,0){45}}
\multiput(10,0)(15,0){4}{\line(0,1){45}}
\multiput(27,32)(0,15){1}{\line(1,1){11}}
\multiput(27,43)(0,15){1}{\line(1,-1){11}}
\multiput(42,17)(0,15){1}{\line(1,1){11}}
\multiput(42,28)(0,15){1}{\line(1,-1){11}}
\multiput(12,2)(0,15){1}{\line(1,1){11}}
\multiput(12,13)(0,15){1}{\line(1,-1){11}}
\put(1,34){3}
\put(1,19){2}
\put(1,4){1}
\put(15, -10){1}
\put(30,-10){2}
\put(45,-10){3}
\put(30,5){$\bullet$}
\put(45,5){$\bullet$}
\put(30,20){$\bullet$}
\put(45,35){$\bullet$}
\end{picture}
&
\begin{picture}(55,50)(0,-5)
\multiput(10,0)(0,15){4}{\line(1,0){45}}
\multiput(10,0)(15,0){4}{\line(0,1){45}}
\multiput(42,32)(0,15){1}{\line(1,1){11}}
\multiput(42,43)(0,15){1}{\line(1,-1){11}}
\multiput(27,17)(0,15){1}{\line(1,1){11}}
\multiput(27,28)(0,15){1}{\line(1,-1){11}}
\multiput(12,2)(0,15){1}{\line(1,1){11}}
\multiput(12,13)(0,15){1}{\line(1,-1){11}}
\put(1,34){3}
\put(1,19){2}
\put(1,4){1}
\put(15, -10){1}
\put(30,-10){2}
\put(45,-10){3}
\put(30,5){$\bullet$}
\put(45,20){$\bullet$}
\put(45,5){$\bullet$}
\end{picture}
\end{array}$$
\caption{Rook placements of three rooks in $[3]\times [3]$.}
\end{figure}
\begin{align*}
&r_3(a,b;q,p;B)\\
&= 1+w_{a,b;q,p}(-1)+w_{a,b;q,p}(-2)+
2\cdot w_{a,b;q,p}(-2)w_{a,b;q,p}(-1)+
w_{a,b;q,p}(-2)w_{a,b;q,p}(-1)^2\\
&= (1+w_{aq^{-6},bq^{-3};q,p}(1)+w_{aq^{-6},bq^{-3};q,p}(1)w_{aq^{-6},bq^{-3};q,p}(2))
(1+w_{aq^{-6},bq^{-3};q,p}(2))\\
&=(1+W_{aq^{-6},bq^{-3};q,p}(1)+W_{aq^{-6},q^{-3}b;q,p}(2))(1+W_{aq^{-4},bq^{-2};q,p}(1))\\
&=[3]_{aq^{-6},bq^{-3};q,p}[2]_{aq^{-4},bq^{-2};q,p},
\end{align*}
where we used the property \eqref{wshift}.
In general, for $B=B(n,n,\dots,n)=[n]\times [n]$, we have
\begin{equation}
r_n(a,b;q,p;B)=[n]_{aq^{-2n},bq^{-n};q,p}[n-1]_{aq^{2-2n},bq^{1-n};q,p}\cdots
[1]_{aq^{-2},bq^{-1};q,p}.
\end{equation}
See Corollary~\ref{nfac} for a proof.
\end{example}

The following lemma plays an essential role in the subsequent developments
leading to the product formula in Theorem~\ref{thm:elptprod}.
\begin{lemma}\label{lem:prod}
Let $B=B(b_1,\dots,b_n)\subset[n]\times\mathbb N$ be a board of $n$ columns
and let $B_k$ denote the extended board by attaching an $[n]\times [k]$
board below $B$ (the additional rows being indexed by $0,-1,\dots,{-k+1}$).
Suppose that $Q\in\mathcal{N}_t(B_k)$ is a rook placement of $t$ rooks
in the first $i-1$ columns of $B_k$. Let $D_i(Q)$ denote the set of all
rook placements which extend $Q$ by adding a rook in column $i$. Then we have 
\begin{equation}
\sum_{P\in D_i(Q)}wt(P)=
[b_i+k-t]_{aq^{2(i-1-b_i)},bq^{i-1-b_i};q,p}\,wt(Q).\label{eqn:lemma}
\end{equation}
\end{lemma}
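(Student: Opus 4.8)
The plan is to compute the left-hand side of \eqref{eqn:lemma} by summing over the possible rows in which the new rook in column $i$ may be placed. Fix a placement $Q\in\mathcal N_t(B_k)$ occupying columns $1,\dots,i-1$ and having $t$ rooks there. A placement $P\in D_i(Q)$ is obtained by choosing a cell $(i,j)$ in column $i$ of $B_k$ that is not already cancelled by a rook of $Q$; since each of the $t$ rooks of $Q$ lies in the north-west region of the whole column $i$, exactly $t$ cells of column $i$ are cancelled, leaving $b_i+k-t$ available rows. Label these available rows, from top to bottom, and note that placing the new rook in the $m$-th available row from the top (for $m=1,\dots,b_i+k-t$) cancels the $m-1$ available cells below it in column $i$, and cancels nothing in any other column. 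Therefore $\mathrm{wt}(P)/\mathrm{wt}(Q)$ is the product of the small weights $w_{a,b;q,p}(i-j-r_{(i,j)}(P))$ over precisely those $m-1$ cells $(i,j)$ of column $i$ below the new rook that were not cancelled by $Q$, because all other uncancelled cells of $P$ (those outside column $i$, or in column $i$ but with no bearing on rook count) contribute exactly as they do in $Q$; crucially, the quantity $i-j-r_{(i,j)}(P)$ at such a cell equals $i-j-r_{(i,j)}(Q)$ since the new rook sits above $(i,j)$ in the same column and hence is \emph{not} counted in $r_{(i,j)}$.

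The key bookkeeping step is then to identify, for the new rook placed in the $m$-th available row from the top, the multiset of arguments $\{\,i-j-r_{(i,j)}(P)\,\}$ as $(i,j)$ ranges over the $m-1$ uncancelled cells of column $i$ beneath it. Here one uses that $r_{(i,j)}(P)$, for such a cell, counts exactly the rooks of $Q$ strictly north-west of $(i,j)$; combined with the fact that the cancelled cells in column $i$ correspond precisely to these rook rows, a direct count shows that these $m-1$ arguments are consecutive integers, say $\ell, \ell-1, \dots, \ell-m+2$, with top value $\ell = (i-1-b_i)+\text{(something independent of $m$ but tracking $t$)}$; explicitly the arguments run over $\{\,b_i+k-t-m+1,\,\dots,\,b_i+k-t-1\,\}$ after shifting by the base point $i-1-b_i$. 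Consequently
\begin{equation*}
\sum_{P\in D_i(Q)}\mathrm{wt}(P)
= \mathrm{wt}(Q)\sum_{m=1}^{b_i+k-t}\ \prod_{j=1}^{m-1} w_{aq^{2(i-1-b_i)},bq^{i-1-b_i};q,p}(b_i+k-t-j),
\end{equation*}
where \eqref{wshift} has been used to absorb the base-point shift $i-1-b_i$ into the parameters.

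Finally I would recognize the inner sum as an elliptic number. Using the product formula $W_{a,b;q,p}(k)=\prod_{j=1}^k w_{a,b;q,p}(j)$ in \eqref{def:elpwt} together with the shift relation \eqref{Wshift}, each summand $\prod_{j=1}^{m-1} w_{\cdot}(N-j)$ (with $N=b_i+k-t$) telescopes to a ratio of big weights, and the telescoping sum $\sum_{m=1}^{N}(\cdots)$ collapses — via the defining recursion \eqref{recelln} for elliptic numbers, $[z]=[z-1]+W(z-1)$, applied iteratively — to $[N]_{aq^{2(i-1-b_i)},bq^{i-1-b_i};q,p}$. This yields exactly the right-hand side of \eqref{eqn:lemma}. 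The main obstacle I expect is the second step: carefully verifying that the arguments of the small weights occurring beneath the new rook are genuinely the consecutive integers claimed, uniformly over all configurations of $Q$ — this requires tracking simultaneously the cancellation pattern in column $i$ and the rook-count function $r_{(i,j)}$, and making sure the shift $i-1-b_i$ is the correct base point regardless of how the $t$ rooks of $Q$ are distributed among the columns $1,\dots,i-1$ and the rows of $B_k$.
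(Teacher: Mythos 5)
Your overall strategy coincides with the paper's: fix $Q$, sum over the $N:=b_i+k-t$ admissible cells of column $i$, identify the extra weight of each choice as a product of consecutive small weights with base shifted to $aq^{2(i-1-b_i)},bq^{i-1-b_i}$, and collapse the sum via \eqref{recelln}. However, the key bookkeeping step is carried out with the wrong orientation. Under the paper's convention a rook cancels the cells \emph{below} it in its column, so if the new rook occupies the $m$-th admissible cell counted from the top, the admissible cells below it are cancelled by that very rook and contribute nothing to $wt(P)$; the cells that account for $wt(P)/wt(Q)$ are the $m-1$ admissible cells \emph{above} it. (Your sentence is also internally inconsistent: the $m$-th admissible cell from the top has $N-m$, not $m-1$, admissible cells below it.) The correct count is: if the $s$-th admissible cell from the top is $(i,j)$, then exactly $(b_i-j)-(s-1)$ cells above it in column $i$ are cancelled, each by a rook of $Q$ strictly north-west of $(i,j)$, so $r_{(i,j)}(P)=(b_i-j)-(s-1)$ and the weight argument is $i-j-r_{(i,j)}(P)=(i-1-b_i)+s$. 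Hence the $m$-th choice contributes $\prod_{s=1}^{m-1}w_{aq^{2(i-1-b_i)},bq^{i-1-b_i};q,p}(s)=W_{aq^{2(i-1-b_i)},bq^{i-1-b_i};q,p}(m-1)$, and summing over $m=1,\dots,N$ gives $[N]_{aq^{2(i-1-b_i)},bq^{i-1-b_i};q,p}$ by \eqref{recelln}.

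Your version instead assigns to the $m$-th choice the arguments $N-1,N-2,\dots,N-m+1$, i.e.\ the summand $\prod_{j=1}^{m-1}w(N-j)=W(N-1)/W(N-m)$ (writing $w(\cdot),W(\cdot)$ for the weights in the shifted base $aq^{2(i-1-b_i)},bq^{i-1-b_i}$), so your column sum equals $W(N-1)\sum_{s=0}^{N-1}W(s)^{-1}$ rather than $\sum_{s=0}^{N-1}W(s)=[N]$. These coincide in the $q$-case $W(s)=q^s$ (which is why the slip is invisible classically), but already for $N=3$ equality would force $W(2)=W(1)^2$, i.e.\ $w(2)=w(1)$, which fails for generic $a,b,p$. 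So the final ``telescoping'' step does not go through as written: \eqref{recelln} collapses sums of big weights with arguments $0,1,\dots,N-1$ in a \emph{fixed} base, whereas your ratios $W(N-1)/W(N-m)$ are, by \eqref{Wshift}, big weights in further-shifted parameters. Correcting the orientation — the contributing cells are the admissible cells above the new rook, with arguments $1,\dots,m-1$ after the base shift by $i-1-b_i$ — repairs the argument and reproduces the paper's proof.
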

\begin{proof} 
Let $i=1$. We want to show that
\begin{equation*}
\sum_{P\in D_1 (Q)}wt(P)=[b_1 +k]_{aq^{-2b_1},bq^{-b_1};q,p}.
\end{equation*}
If we consider all possible rook placements $P$ in the first column
and sum up all the weights of $P$, then we obtain 
\begin{align*}
\sum_{P\in D_1 (Q)}wt(P) &= 1+w_{aq^{-2b_1},bq^{-b_1};q,p}(1)+\cdots +
\prod_{j=1}^{b_1 +k-1}w_{aq^{-2b_1},bq^{-b_1};q,p}(j)\\
&=1+\sum_{j=1}^{b_1 +k-1}W_{aq^{-2b_1},bq^{-b_1};q,p}(j)\\
&=[b_1 +k]_{aq^{-2b_1},bq^{-b_1};q,p},
\end{align*}
where the sum telescopes according to \eqref{recelln}.

Now given $Q$, a rook placement of $t$ rooks in the first $i-1$ columns,
we consider all possible rook placements of one additional rook
in the $i$-th column.
If we place the $i$-th rook in the topmost possible place, then it cancels
all the empty cells below and so the weight coming from that rook placement
is $1$. Say we placed the $i$-th rook in the second topmost possible place.
Then there is one empty cell which was the topmost possible cell to place
a rook. If the coordinate of that cell is $(i, b_i-l_1)$, then that
means there are $l_1$ many rooks in the north-west region of that cell.
So the weight of this cell would be
\begin{align*}
w_{a,b;q,p}(i-b_i+l_1-l_1)&= w_{a,b;q,p}(i-1-b_i+1)\\&=
w_{aq^{2(i-1-b_i)},bq^{i-1-b_i};q,p}(1),
\end{align*}
by \eqref{wshift}. If we place the $i$-th rook in the
third topmost place, then the weight of the second empty cell would be
\begin{align*}
w_{a,b;q,p}(i-b_i+l_1+l_2+1-l_1-l_2)&=
w_{a,b;q,p}(i-1-b_i+2)\\&=w_{aq^{2(i-1-b_i)},bq^{i-1-b_i};q,p}(2),
\end{align*}
where $l_2$ is the number of rows between the topmost empty cell and the
second topmost empty cell. If we place the $i$-th rook in the bottom-most
possible cell, the weight coming from that placement would be
\begin{equation*}
\prod_{j=1}^{b_i+k-t-1}w_{aq^{2(i+n-b_i-1)},bq^{i+n-b_i-1};q,p}(j).
\end{equation*}
Hence by summing up all the weights coming from the all possible
rook placements of the $(t+1)$-st rook in the $i$-th column, we get 
\begin{align*}
1+\sum_{s=1}^{b_i+k-t-1}\prod_{j=1}^s w_{aq^{2(i-1-b_i)},bq^{i-1-b_i};q,p}(j)
&=1+\sum_{s=1}^{b_i+k-t-1} W_{aq^{2(i-1-b_i)},bq^{i-1-b_i};q,p}(s)\\
&=[b_i+k-t]_{aq^{2(i-1-b_i)},bq^{i-1-b_i};q,p}.
\end{align*}
Combining this with the weights coming from the placement $Q$, we obtain
\eqref{eqn:lemma}.
\end{proof}

The following proposition constitutes an elliptic extension
of Proposition~\ref{prop:q1}. As before, $B^\infty$ denotes the Ferrers
board obtained by appending below $B$ the infinite board of width $n$,
and for a rook placement $P$ in $B^\infty$, $\text{max}(P)$ denotes
the number of rows below the ground in which the lowest rook is located.
\begin{proposition}\label{prop:elp1}
For a Ferrers board $B=B(b_1,b_2,\dots, b_n)$, we have 
\begin{equation}\label{eqn:prop1}
\frac{1}{1-z}\sum_{P\in \mathcal{N}_n (B^{\infty})}z^{\text{max}(P)}\cdot wt(P)=
\sum_{k\ge 0}z^k \prod_{i=1}^n [k+b_i -i+1]_{aq^{2(i-1-b_i)},bq^{i-1-b_i};q,p}.
\end{equation}
\end{proposition}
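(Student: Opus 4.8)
The plan is to imitate the classical argument of Garsia and Remmel but with Lemma~\ref{lem:prod} doing the work at each column. First I would fix $k$ and compute the contribution to the left-hand side coming from placements $P \in \mathcal N_n(B^\infty)$ with $\mathrm{max}(P) \le k$, equivalently from placements of $n$ nonattacking rooks entirely inside the truncated board $B_k$ (the board $B$ with $k$ extra rows appended below, indexed $0, -1, \dots, -k+1$). The key observation is that summing $\mathrm{wt}(P)$ over all such placements can be done column by column: starting from the empty placement in column $0$ and repeatedly applying \eqref{eqn:lemma} with $t = 0, 1, \dots, n-1$ as we fill columns $1, 2, \dots, n$, the $i$-th application (with exactly $i-1$ rooks already placed, so $t = i-1$) contributes the factor $[b_i + k - (i-1)]_{aq^{2(i-1-b_i)},bq^{i-1-b_i};q,p}$. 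Telescoping these $n$ steps gives
\[
\sum_{P\in\mathcal N_n(B_k)} \mathrm{wt}(P) \;=\; \prod_{i=1}^n [k + b_i - i + 1]_{aq^{2(i-1-b_i)},bq^{i-1-b_i};q,p}.
\]
(One should check that Lemma~\ref{lem:prod} applies with $t = i-1$ precisely because all $i-1$ previously placed rooks sit in the first $i-1$ columns; the lemma is stated exactly in this generality.)

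Next I would relate $\sum_{P\in\mathcal N_n(B_k)} \mathrm{wt}(P)$ to the generating function on the left. Writing $N_k := \sum_{P\in\mathcal N_n(B_k)} \mathrm{wt}(P)$ and letting $a_k$ denote the sum of $\mathrm{wt}(P)$ over placements with $\mathrm{max}(P) = k$ exactly, we have $N_k = \sum_{j=0}^{k} a_j$, so $a_k = N_k - N_{k-1}$ (with $N_{-1} := 0$). Hence
\[
\sum_{P\in\mathcal N_n(B^\infty)} z^{\mathrm{max}(P)}\,\mathrm{wt}(P) \;=\; \sum_{k\ge 0} a_k z^k \;=\; \sum_{k\ge 0}(N_k - N_{k-1}) z^k \;=\; (1-z)\sum_{k\ge 0} N_k z^k,
\]
where the last equality is the standard Abel-type rearrangement $\sum_k (N_k - N_{k-1})z^k = (1-z)\sum_k N_k z^k$, valid as an identity of formal power series in $z$. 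Dividing by $1-z$ and substituting the product formula for $N_k$ yields exactly \eqref{eqn:prop1}.

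The main thing to be careful about — rather than a deep obstacle — is the bookkeeping in the telescoping step: one must verify that the parameter shift in each factor is genuinely $aq^{2(i-1-b_i)}, bq^{i-1-b_i}$ independently of how many rows $k$ were appended and of the positions of the earlier rooks, which is precisely the content of Lemma~\ref{lem:prod} and ultimately rests on the shift relation \eqref{wshift} together with the telescoping identity \eqref{recelln}. A secondary point is to confirm that a placement of $n$ nonattacking rooks in $B^\infty$ with $\mathrm{max}(P) = k$ really is the same data as a placement of $n$ nonattacking rooks in $B_{k+1}$ whose lowest rook lies in row $-k+1$... more carefully, $\mathrm{max}(P)\le k$ corresponds to placements fitting in $B_k$, which is what is used above; since $B$ is a Ferrers board each column of $B_k$ has height $b_i + k$, so the column-by-column count matches the hypotheses of the lemma. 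With these checks in place the identity follows, and letting $p\to 0$ (and then $a,b$ to their limits) recovers Proposition~\ref{prop:q1}.
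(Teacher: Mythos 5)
Your proof is correct and follows essentially the same route as the paper: the key step is the truncated-board identity $\sum_{P\in\mathcal N_n(B_k)}\mathrm{wt}(P)=\prod_{i=1}^n[k+b_i-i+1]_{aq^{2(i-1-b_i)},bq^{i-1-b_i};q,p}$, obtained by iterating Lemma~\ref{lem:prod} column by column, exactly as in the paper. Your Abel-type rearrangement with $a_k=N_k-N_{k-1}$ is just a rephrasing of the paper's step of multiplying the identity (with the indicator $\chi(\mathrm{max}(P)\le k)$) by $z^k$ and summing over $k\ge 0$.
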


\begin{proof}
We first show the following identity
\begin{equation}
\sum_{P\in \mathcal{N}_n(B^\infty)} wt(P)\cdot \chi(\text{max}(P)\le k) =
\prod_{i=1}^n [k+b_i -i+1]_{aq^{2(i-1-b_i)},bq^{i-1-b_i};q,p},\label{eqn:max}
\end{equation}
where $\chi$ is the truth function, i.e.\ $\chi(A)=1$ if the statement
$A$ is true, otherwise $\chi(A)=0$.
This easily follows from Lemma~\ref{lem:prod} by iteration,
using the fact that each column of $B^\infty$ contains a rook.
Then \eqref{eqn:prop1} is obtained by multiplying both sides
of \eqref{eqn:max} by $z^k$
and summing over all $k\ge 0$.
\end{proof}

The following product formula is the main result of this section. 
\begin{theorem}\label{thm:elptprod}
Let $B=B(b_1,\dots, b_n)$ be a Ferrers board. Then we have 
\begin{align}\label{eqn:prodthm}
\qquad\sum_{k=0}^n r_{n-k}(a,b;q,p;B)\,
\prod_{j=1}^k[z-j+1]_{aq^{2(j-1)},bq^{j-1};q,p}&\notag\\
=\prod_{i=1}^n [z+b_i -i+1]_{aq^{2(i-1-b_i)},bq^{i-1-b_i};q,p}&.
\end{align}
\end{theorem}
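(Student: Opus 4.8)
The plan is to establish \eqref{eqn:prodthm} first for $z$ a nonnegative integer, by a rook-theoretic argument resting on Proposition~\ref{prop:elp1}, and then to promote it to arbitrary $z$ by an ellipticity argument. Fix $z=N\in\mathbb{N}_0$ and let $B^{(N)}$ be the board obtained from $B$ by appending $N$ rows below the ground (the board $B_N$ of Lemma~\ref{lem:prod} with $k=N$). By \eqref{eqn:max} --- equivalently, by reading off the coefficient of $z^N$ in Proposition~\ref{prop:elp1} --- the right-hand side of \eqref{eqn:prodthm} at $z=N$ equals $\sum_{P\in\mathcal{N}_n(B^{(N)})}\text{wt}(P)$, since the placements $P\in\mathcal{N}_n(B^\infty)$ with $\text{max}(P)\le N$ are precisely those sitting inside $B^{(N)}$, with unchanged weight. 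I would then evaluate this sum a second way, by sorting each $P\in\mathcal{N}_n(B^{(N)})$ according to the subplacement $P'$ of its rooks lying in $B$ (equivalently, weakly above the ground); if $P'$ occupies $n-j$ columns, the other $j$ columns of $B^{(N)}$ carry their rook strictly below the ground. Checking the cancellation rule shows that the uncancelled cells of $B^{(N)}$ weakly above the ground are exactly the uncancelled cells of $B$ relative to $P'$, with the same weights (a rook north-west of an above-ground cell is itself above the ground, hence in $P'$, so the pertinent north-west rook counts coincide); hence $\text{wt}(P)=\text{wt}(P')\cdot\text{wt}_{<0}(P)$, where $\text{wt}_{<0}(P)$ is the product of the weights of the uncancelled cells lying strictly below the ground.

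The heart of the matter is the claim that, for a fixed $P'$ with empty columns $c_1<c_2<\dots<c_j$,
\[
\sum_{P''}\text{wt}_{<0}(P'\sqcup P'')=\prod_{l=1}^{j}[N-l+1]_{aq^{2(l-1)},bq^{l-1};q,p},
\]
where $P''$ runs over the placements of one rook in each $c_l$, all in the $N$ rows below the ground and non-attacking among themselves. I would prove this by inserting the below-ground rooks column by column in the order $c_1,\dots,c_j$, in the spirit of the proof of Lemma~\ref{lem:prod}: if the rook in column $c_l$ occupies the $m$-th still-available below-ground row, then the uncancelled cells above it in that column are the $1$st through $(m-1)$st available rows, and evaluating the north-west rook count of the $t$-th such cell --- using that columns $1,\dots,c_l-1$ contain exactly $c_l-l$ rooks of $P'$ together with the $l-1$ rooks of $P''$ placed so far, and then invoking the shift relation \eqref{wshift} --- shows that its weight is $w_{aq^{2(l-1)},bq^{l-1};q,p}(t)$, independently of $P'$ and of the positions of the earlier below-ground rooks. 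Hence column $c_l$ contributes $W_{aq^{2(l-1)},bq^{l-1};q,p}(m-1)$, and summing over the $N-l+1$ admissible positions of the $c_l$-rook gives $\sum_{u=0}^{N-l}W_{aq^{2(l-1)},bq^{l-1};q,p}(u)=[N-l+1]_{aq^{2(l-1)},bq^{l-1};q,p}$, the sum telescoping via \eqref{recelln}. Summing $\text{wt}(P')$ over all $P'$ with $n-j$ rooks yields $r_{n-j}(a,b;q,p;B)$, and summing over $j$ produces exactly the left-hand side of \eqref{eqn:prodthm} at $z=N$. This proves \eqref{eqn:prodthm} for all $z\in\mathbb{N}_0$, and I expect this ``independence of $P'$'' bookkeeping --- the small miracle that the rooks of $P'$ sitting to the west combine with \eqref{wshift} to regenerate the shifted parameters $aq^{2(l-1)},bq^{l-1}$ cleanly --- to be the main obstacle.

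Finally, to pass from $z\in\mathbb{N}_0$ to arbitrary $z$, I would observe that both sides of \eqref{eqn:prodthm}, as functions of $x=q^z$, are elliptic (invariant under $x\mapsto px$; cf.\ Remark~\ref{rem:totalell}), because the $z$-dependent theta factors in the denominators of all the elliptic numbers occurring --- each of the shape $[z+c]_{aq^{-2c},bq^{-c};q,p}$ on both sides --- collapse to the single common factor $\theta(bq^{z+1};p)\,\theta(aq^{z-1}/b;p)$. Thus the difference of the two sides is an elliptic function of $x$ whose poles lie only at $x\equiv(bq)^{-1}$ and $x\equiv bq/a$ modulo the period lattice, each of order at most $n$; since it vanishes at the points $q^0,q^1,q^2,\dots$, infinitely many of which are pairwise distinct modulo the lattice when $q$ is generic, it vanishes identically. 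The identity \eqref{eqn:prodthm} for all $z$ and all admissible $q,p,a,b$ then follows by continuity.
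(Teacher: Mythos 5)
Your proposal is correct and follows essentially the same route as the paper's proof: it evaluates $\sum_{P\in\mathcal{N}_n(B_z)}\mathrm{wt}(P)$ for nonnegative integer $z$ in two ways, once via the $k=z$ case of \eqref{eqn:max} (Proposition~\ref{prop:elp1}) and once by splitting each placement into its above-ground part (giving $r_{n-k}(a,b;q,p;B)$) and its below-ground part (giving $\prod_{j=1}^k[z-j+1]_{aq^{2(j-1)},bq^{j-1};q,p}$), and then extends to general $z$ by analytic continuation. The only difference is that you spell out details the paper leaves implicit, namely the column-by-column verification (via \eqref{wshift} and the telescoping \eqref{recelln}) that the below-ground contribution is independent of the above-ground placement, and the ellipticity-in-$q^z$ argument implementing the continuation step.
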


\begin{proof}
It suffices to prove the theorem for nonnegative integer values of $z$.
The result follows then by analytic continuation.

We consider the extended board $B_z$ by attaching an $[n]\times [z]$ board
below $B$ and compute 
\begin{equation}
\sum_{P\in \mathcal{N}_n(B_z)}wt(P)\label{eqn:wtsum}
\end{equation}
in two different ways.
On one hand, \eqref{eqn:wtsum} can be evaluated using
the $k=z$ case of \eqref{eqn:max} which thus explains
the right-hand side of \eqref{eqn:prodthm}.
On the other hand,
in \eqref{eqn:wtsum} we can consider, for each $0\le k\le n$,
the contributions from the $k$-rook
configurations below the ground, yielding
$\prod_{j=1}^k[z-j+1]_{aq^{2(j-1)},bq^{j-1};q,p}$,
and those from the $(n-k)$ rooks in $B$, yielding $r_{n-k}(a,b;q,p;B)$,
separately. This explains the
left-hand side of \eqref{eqn:prodthm}.
\end{proof}

The following corollary is an easy consequence of Theorem~\ref{thm:elptprod}.
\begin{corollary}\label{nfac}
Let $B=B(b_1,\dots, b_n)$ be a Ferrers board. Then we have 
\begin{equation}
r_n(a,b;q,p;B)
=\prod_{i=1}^n [b_i -i+1]_{aq^{2(i-1-b_i)},bq^{i-1-b_i};q,p}.
\end{equation}
In particular, for the square shape Ferrers board $B=B(n,n,\dots,n)=[n]\times[n]$,
we have
\begin{equation}
r_n(a,b;q,p;B)
=[n]_{aq^{-2n},bq^{-n};q,p}[n-1]_{aq^{2-2n},bq^{1-n};q,p}\dots[1]_{aq^{-2},bq^{-1};q,p}.
\end{equation}
\end{corollary}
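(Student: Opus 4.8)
The plan is to derive the Corollary by specializing the product formula of Theorem~\ref{thm:elptprod} at $z=0$. Since \eqref{eqn:prodthm} has been established for all $z$ (in particular for the nonnegative integer $z=0$), we may substitute $z=0$ directly into it.

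First I would observe that the elliptic number $[0]_{a,b;q,p}$ vanishes: by the definition \eqref{elln}, its numerator carries the factor $\theta(q^0;p)=\theta(1;p)$, and $\theta(1;p)=\prod_{j\ge0}((1-p^j)(1-p^{j+1}))=0$ because of the $j=0$ term. Consequently, for every $k\ge 1$ the product $\prod_{j=1}^k[z-j+1]_{aq^{2(j-1)},bq^{j-1};q,p}$ contains, at $j=1$ and $z=0$, the factor $[0]_{a,b;q,p}=0$ and therefore vanishes at $z=0$. Thus on the left-hand side of \eqref{eqn:prodthm} only the $k=0$ summand survives; since the empty product equals $1$, this summand is exactly $r_n(a,b;q,p;B)$. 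On the right-hand side, setting $z=0$ yields $\prod_{i=1}^n[b_i-i+1]_{aq^{2(i-1-b_i)},bq^{i-1-b_i};q,p}$, which is the claimed formula.

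For the square board $B=B(n,\dots,n)=[n]\times[n]$ one simply sets $b_i=n$ for all $i$. Then the $i$-th factor becomes $[n-i+1]_{aq^{2(i-1-n)},bq^{i-1-n};q,p}$, and letting $i$ run from $1$ to $n$ produces the stated product $[n]_{aq^{-2n},bq^{-n};q,p}\,[n-1]_{aq^{2-2n},bq^{1-n};q,p}\cdots[1]_{aq^{-2},bq^{-1};q,p}$.

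There is no real obstacle here; the only point requiring (minor) care is the vanishing $[0]_{a,b;q,p}=0$, which is precisely what isolates the $k=0$ term as the unique surviving contribution. Alternatively, and just as quickly, one can bypass Theorem~\ref{thm:elptprod} and argue from \eqref{eqn:max} with $k=0$: the condition $\text{max}(P)\le 0$ forces all $n$ rooks of $P$ to lie in $B$ itself, so no cell below the ground is uncancelled and $wt(P)$ computed in $B^\infty$ agrees with $wt(P)$ computed in $B$; hence the left-hand side of \eqref{eqn:max} collapses to $\sum_{P\in\mathcal{N}_n(B)}wt(P)=r_n(a,b;q,p;B)$, while the right-hand side is the desired product.
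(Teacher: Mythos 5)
Your proposal is correct and follows essentially the same route as the paper: set $z=0$ in Theorem~\ref{thm:elptprod}, note that $[0]_{a,b;q,p}=0$ (since $\theta(1;p)=0$) kills every summand with $k\ge 1$, and read off the $k=0$ term; the square-board case is then the specialization $b_i=n$. The alternative you sketch via the $k=0$ case of \eqref{eqn:max} is also valid, but the main argument coincides with the paper's proof.
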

\begin{proof}
In Theorem~\ref{thm:elptprod} we let $z\to 0$.
Since
\begin{equation*}
\prod_{j=1}^k[1-j]_{aq^{2(j-1)},bq^{j-1};q,p}=\delta_{k,0},
\end{equation*}
the left-hand side of \eqref{eqn:prodthm}
reduces to one term only, corresponding to $k=0$.
\end{proof}

We also establish
an elliptic analogue of Proposition~\ref{thm:recur},
a recursion for elliptic rook numbers.
\begin{theorem}\label{thm:elptrecur}
Let $B$ be a Ferrers board with $l$ columns of height at most $m$,
and $B\cup m$ denote the board obtained by adding the $(l+1)$-st
column of height $m$ to $B$. Then, for any integer $k$, we have 
\begin{subequations}
\begin{align}
r_k (a,b;q,p;B)={}&0\qquad\text{for $k<0$ or $k>l$,}\\
r_0 (a,b;q,p;B)={}&1\qquad\text{for $l=0$, i.e.\ for
$B$ being the empty board},\\\intertext{and}
r_k (a,b;q,p;B\cup m)=
{}&W_{aq^{2(l-m)},bq^{l-m};q,p}(m-k)\,r_{k}(a, b;q,p;B)\notag\\\label{recrk}
&+[m-k+1]_{aq^{2(l-m)},bq^{l-m};q,p}\,r_{k-1}(a, b;q,p;B).
\end{align}
\end{subequations}
\end{theorem}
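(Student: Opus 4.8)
The plan is to follow Garsia and Remmel's strategy from Proposition~\ref{thm:recur}: fix $k$ and split $\mathcal{N}_k(B\cup m)$ according to whether the new $(l+1)$-st column is empty or carries a rook. The two boundary assertions are immediate (there is no placement of $k<0$ rooks and no placement of $k>l$ nonattacking rooks in $l$ columns, and for the empty board the unique placement is the empty one, of weight the empty product $1$), so the substance is the recursion \eqref{recrk}. In both cases the point is to extract, for a given placement, the elliptic weight contributed by the uncancelled cells lying in column $l+1$; this is where the shift \eqref{wshift} and the telescoping \eqref{recelln} are used, and the computation runs parallel to the proof of Lemma~\ref{lem:prod}.

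\emph{Column $l+1$ empty.} Then all $k$ rooks lie in $B$, the cancellation inside $B$ is unchanged, and the factors $w_{a,b;q,p}(i-j-r_{(i,j)}(P))$ with $i\le l$ are precisely those of $P$ regarded as a placement on $B$. In column $l+1$ exactly the $k$ rook-occupied rows are cancelled (each rook lies in a row $\le m$), leaving $m-k$ uncancelled cells; reading them from top to bottom, the $s$-th (sitting in some row $\mu_s$) contributes $w_{a,b;q,p}(l-m+s)=w_{aq^{2(l-m)},bq^{l-m};q,p}(s)$, because every row strictly above it is either one of the $s-1$ uncancelled cells above or a cancelled (rook-occupied) row, so the number of rooks in its north-west region is $m-\mu_s-(s-1)$. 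The product of these factors is $W_{aq^{2(l-m)},bq^{l-m};q,p}(m-k)$, independent of the placement, so this case contributes $W_{aq^{2(l-m)},bq^{l-m};q,p}(m-k)\,r_k(a,b;q,p;B)$.

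\emph{Column $l+1$ carries a rook at $(l+1,j_0)$.} Deleting it gives a placement $Q$ of $k-1$ rooks in $B$ avoiding row $j_0$; conversely each such $Q$ has $m-k+1$ completions, one per unoccupied row of $\{1,\dots,m\}$. A rook in the rightmost column cancels only the cells below it in its own column, so it does not affect the cancellation in $B$, and $\text{wt}(P)=\text{wt}(Q)$ times the weight of the uncancelled cells of column $l+1$, namely the rows $>j_0$ unoccupied by $Q$. The same bookkeeping shows the $s$-th of these (from the top) contributes $w_{aq^{2(l-m)},bq^{l-m};q,p}(s)$, so $N$ such cells give $W_{aq^{2(l-m)},bq^{l-m};q,p}(N)$; and as $j_0$ runs over the $m-k+1$ rows avoided by $Q$, listed as $\nu_1>\dots>\nu_{m-k+1}$, one gets $N=t-1$ for $j_0=\nu_t$, so $N$ sweeps $\{0,1,\dots,m-k\}$ bijectively. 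Hence, summing over $j_0$ and using $W_{aq^{2(l-m)},bq^{l-m};q,p}(0)=1$ together with \eqref{recelln},
\[
\sum_{N=0}^{m-k}W_{aq^{2(l-m)},bq^{l-m};q,p}(N)=[m-k+1]_{aq^{2(l-m)},bq^{l-m};q,p},
\]
and summing over $Q$ this case contributes $[m-k+1]_{aq^{2(l-m)},bq^{l-m};q,p}\,r_{k-1}(a,b;q,p;B)$. Adding the two cases gives \eqref{recrk}.

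The main obstacle is the two bookkeeping claims above: that the $s$-th uncancelled cell of the new column contributes exactly $w_{aq^{2(l-m)},bq^{l-m};q,p}(s)$ — which relies on $B\cup m$ again being a Ferrers board, so that column $l+1$ is rightmost (a rook there cancels nothing in $B$) and every rook occupies a row $\le m$ — and that $N$ ranges over $\{0,1,\dots,m-k\}$ without repetition; everything else is formal. One should also record that \eqref{recrk} stays correct in the degenerate ranges $m-k<0$ and $m-k+1\le 0$, where the pertinent rook numbers vanish and $[0]_{a,b;q,p}=0$ because $\theta(1;p)=0$.
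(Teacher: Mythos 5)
Your proof is correct and follows essentially the same route as the paper: the paper likewise splits placements on $B\cup m$ according to whether the last column is empty (giving the factor $W_{aq^{2(l-m)},bq^{l-m};q,p}(m-k)$ from the $m-k$ uncancelled cells there) or contains a rook (giving $[m-k+1]_{aq^{2(l-m)},bq^{l-m};q,p}$ via Lemma~\ref{lem:prod}). The only difference is that you re-derive the second case's weight sum explicitly in the style of the proof of Lemma~\ref{lem:prod}, where the paper simply cites that lemma; your bookkeeping there is accurate.
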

\begin{proof}
This recursion stems from a weighted enumeration of placements of
$k$ nonattacking rooks on $B\cup m$. 
We distinguish the cases whether there is a rook
in the last column or not.
The first term on the right-hand side of \eqref{recrk} is obtained
when there is no rook in the last column.
The 
weight multiplied in front of $r_k(a,b;q,p;B)$ comes from the uncancelled $(m-k)$ cells
in the last column. The second term on the right-hand side of
\eqref{recrk} is obtained when there
is a rook in the last column. The coefficient in front of
$r_{k-1}(a,b;q,p;B)$ is a consequence of Lemma~\ref{lem:prod}.
\end{proof}

For $p\to 0$, followed by $b\to 0$ the above recurrence relation
\eqref{recrk} reads
\begin{equation}\label{recrkaq}
r_k(a;q;B\cup m)=W_{aq^{2(l-m)};q}(m-k)\,r_{k}(a;q;B)+
[m-k+1]_{aq^{2(l-m)};q}\,r_{k-1}(a;q;B),
\end{equation}
where according to \eqref{def:aqwt} and the $p\to 0$, then $b\to 0$
case of \eqref{elln},
\begin{equation}
W_{a;q}(k)=\frac{(1-aq^{1+2k})}{(1-aq)}q^{-k},\qquad\text{and}\qquad
[z]_{a;q}=\frac{(1-q^z)(1-aq^z)}{(1-q)(1-aq)}q^{1-z},
\end{equation}
and the $a;q$-rook numbers are given by
$$r_k(a;q;B)=\lim_{b\to 0}\left(\lim_{p\to 0}r_k(a,b;q,p;B)\right),$$
or
$$r_k(a;q;B)=\sum_{P\in \mathcal N _k(B)}\left(\prod_{(i,j)\in U_B (P)}w_{a;q}(i-j-r_{(i,j)}(P)) \right).$$
As an immediate consequence of this recursion,
we have the following product formula
for the $a;q$-rook numbers of a rectangular shape board $B=[l]\times[m]$
with $l$ columns and $m$ rows.

\begin{proposition}\label{prop:aqrect}
\begin{equation}
r_k(a;q;[l]\times[m])=
q^{\binom{k+1}{2}-lm}\begin{bmatrix}l\\k\end{bmatrix}_{q}
\frac{[m]_q !}{[m-k]_q !}
\frac{(aq^{l-m-k} ;q)_{k}(aq^{1+2l-2m};q^2)_{m-k}}{(aq^{1-2m};q^2)_{m}}.
\end{equation}
\end{proposition}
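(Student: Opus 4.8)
The plan is to prove the identity by induction on the number of columns $l$, with the number of rows $m$ held fixed, using the column-addition recursion \eqref{recrkaq} (the $a;q$-specialization of Theorem~\ref{thm:elptrecur}). The base case $l=0$ is immediate: $[0]\times[m]$ is the empty board, so $r_k(a;q;[0]\times[m])=\delta_{k,0}$, whereas on the right-hand side of the asserted formula the factor $\begin{bmatrix}0\\k\end{bmatrix}_q=\delta_{k,0}$ kills every term except $k=0$, for which the remaining factors collapse to $1$; the cases $k<0$ or $k>\min(l,m)$ are vacuous on both sides (the $q$-binomial, resp.\ the quotient $[m]_q!/[m-k]_q!$, vanishing there). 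Since $l$ and $m$ are discrete, there is no analytic continuation to worry about.

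For the inductive step, writing $R(l,k)$ for the conjectured right-hand side, I would assume $r_k(a;q;[l]\times[m])=R(l,k)$ for all $k$ and substitute this into \eqref{recrkaq} with $B=[l]\times[m]$, so that $B\cup m=[l+1]\times[m]$. Using the explicit forms $W_{a;q}(j)=\frac{1-aq^{1+2j}}{1-aq}q^{-j}$ and $[z]_{a;q}=\frac{(1-q^z)(1-aq^z)}{(1-q)(1-aq)}q^{1-z}$ recorded after \eqref{recrkaq}, one computes $W_{aq^{2(l-m)};q}(m-k)=\frac{1-aq^{2l-2k+1}}{1-aq^{2l-2m+1}}\,q^{k-m}$ and $[m-k+1]_{aq^{2(l-m)};q}=[m-k+1]_q\,\frac{1-aq^{2l-m-k+1}}{1-aq^{2l-2m+1}}\,q^{k-m}$. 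The point is that the denominator factor $1-aq^{2l-2m+1}$ is precisely the leading factor of the base-$q^2$ Pochhammer $(aq^{1+2l-2m};q^2)_{m-k}$ of $R(l,k)$ (respectively of $(aq^{1+2l-2m};q^2)_{m-k+1}$ of $R(l,k-1)$), so multiplying absorbs it and shifts the Pochhammer to $(aq^{3+2l-2m};q^2)_{m-k}=(aq^{1+2(l+1)-2m};q^2)_{m-k}$, exactly the shape needed for $R(l+1,k)$; the factor $[m-k+1]_q$ merges $[m]_q!/[m-k+1]_q!$ into $[m]_q!/[m-k]_q!$; and the stray powers of $q$ collect to $q^{\binom{k+1}{2}-(l+1)m}$, with a residual $q^k$ left on the no-rook term. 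After these rearrangements the whole assertion collapses to the single identity
\begin{equation*}
q^k\begin{bmatrix}l\\k\end{bmatrix}_q(1-aq^{l-m-k})+\begin{bmatrix}l\\k-1\end{bmatrix}_q(1-aq^{2l-m-k+1})=\begin{bmatrix}l+1\\k\end{bmatrix}_q(1-aq^{l-m}),
\end{equation*}
and this is clear: its constant term in $a$ is $q^k\begin{bmatrix}l\\k\end{bmatrix}_q+\begin{bmatrix}l\\k-1\end{bmatrix}_q=\begin{bmatrix}l+1\\k\end{bmatrix}_q$, while the coefficient of $-a$, after cancelling $q^{l-m}$, is $\begin{bmatrix}l\\k\end{bmatrix}_q+q^{l+1-k}\begin{bmatrix}l\\k-1\end{bmatrix}_q=\begin{bmatrix}l+1\\k\end{bmatrix}_q$ --- the two standard forms of the $q$-Pascal recurrence.

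The only real care is in the second step: correctly pairing the factors coming out of $W_{aq^{2(l-m)};q}(m-k)$ and $[m-k+1]_{aq^{2(l-m)};q}$ with the right base-$q$ and base-$q^2$ Pochhammers of $R(l,k)$ and $R(l,k-1)$, and tracking the $q$- and $a$-powers so that the two terms of the recursion organize into the displayed identity; once this is set up, the verification via the two flavours of $q$-Pascal is immediate. As sanity checks one can confirm the $k=0$ case directly (the weights of all $lm$ cells telescope, giving $q^{-lm}(aq^{1+2l-2m};q^2)_m/(aq^{1-2m};q^2)_m$) and the $k=l$ case against Corollary~\ref{nfac}. (Alternatively one could specialize the product formula of Theorem~\ref{thm:elptprod} to $B=[l]\times[m]$ in the $a;q$-limit and invert via $q$-Chu--Vandermonde, but the recursive route is the one the text's phrase ``as an immediate consequence of this recursion'' points to.)
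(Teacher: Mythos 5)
Your proof is correct and is essentially the paper's own argument: induction on $l$ via the recursion \eqref{recrkaq}, with the two terms recombining after common factors are pulled out. Your displayed identity is exactly the paper's key identity $(1-q^{l-k+1})(1-aq^{l-m-k})q^k+(1-q^k)(1-aq^{2l-m-k+1})=(1-q^{l+1})(1-aq^{l-m})$, just written with the $q$-binomial coefficients still attached.
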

\begin{proof}
This follows by induction on $l$, the $l=0$ case being trivial.
In the computation of $r_k(a;q;[l+1]\times[m])$
as a sum of two explicit terms according to
the recurrence relation \eqref{recrkaq}, 
after pulling out common factors,
the sum of the two terms nicely factorizes due to the simple identity
\begin{align*}
(1-q^{l-k+1})(1-aq^{l-m-k})q^k+(1-q^k)(1-aq^{2l-m-k+1})&=(1-q^{l+1})(1-aq^{l-m}).\\[-3em]
\end{align*}
\end{proof}

\smallskip
Note that the elliptic rook numbers and even the $a,b;q$-rook numbers
(obtained from the elliptic rook numbers by letting $p\to 0$),
nor the $0,b;q$-rook numbers,
of rectangular shape boards in general do not factorize (unless $k=0$ or $k=l$).
They in fact already don't factorize in the case $l=2$ and $k=1$ (and $m>1$).

We now take a close look at several special cases of
elliptic rook numbers of particular interest.

\subsection{Elliptic Stirling numbers of the second kind}
The Stirling numbers of the second kind
admit a nice rook theoretic interpretation
when $B$ is a staircase board $\mathsf{St}_n=B(0,1,\dots, n-1)$
(see \cite[Corollary~2.4.2]{Stan}). Namely, for each configuration of
$n-k$ nonattacking rooks on $\mathsf{St}_n$, 
we can associate a set partition of $[n]$ in $k$ blocks. Whenever a cell $(i,j)$
is occupied by a rook, $i$ and $j$ are put in the same block, and 
the numbers which are not contained in any block in this way 
correspond to single blocks.
This describes a one-to-one correspondence between configurations of
$n-k$ nonattacking rooks on
$\mathsf{St}_n$ and  set partitions of $[n]$ into $k$ blocks.
Garsia and Remmel~\cite{GR} extended
this to the $q$-case, thus providing a rook theoretic realization
of Carlitz'~\cite{C1,C2} $q$-Stirling numbers.

We consider the staircase board $\mathsf{St}_n$ to define an
elliptic analogue of
the Stirling numbers of the second kind. For $b_i=i-1$, $i=1,\dots, n$,
Equation~\eqref{eqn:prodthm} becomes 
\begin{equation}
\left( [z]_{a,b;q,p}\right)^n=
\sum_{k=0}^n r_{n-k}(a,b;q,p;\mathsf{St}_n)\,\prod_{j=1}^k
[z-j+1]_{aq^{2(j-1)},bq^{j-1};q,p}.\label{eqn:elptStirling}
\end{equation}
The $r_{n-k}(a,b;q,p;\mathsf{St}_n)$ are actually the
\emph{elliptic Stirling numbers
of the second kind} $\mathcal{S}_{a,b;q,p}(n,k)$ which have recently been
defined and studied (in a different setting) by
Zs\'ofia Keresk\'{e}nyin\'{e} Balogh
and the first author~\cite{KBSchl}.

By using the $y=k$ case of the elementary identity \eqref{recellny},
we obtain from  \eqref{eqn:elptStirling} the following recursion
\begin{align}
\mathcal{S}_{a,b;q,p}(n,k)&=0\qquad\text{for $k<0$ or $k>n$},\notag\\
\mathcal{S}_{a,b;q,p}(0,0)&=1,\notag\\\intertext{and, for $k\ge 0$,}
\mathcal S_{a,b;q,p}(n+1,k)&= W_{a,b;q,p}(k-1)\mathcal S_{a,b;q,p}(n,k-1)+
[k]_{a,b;q,p}\mathcal S_{a,b;q,p}(n,k),
\end{align}
which also can be derived from Theorem~\ref{thm:elptrecur}. 

An explicit formula for the elliptic Stirling numbers
$\mathcal{S}_{a,b;q,p}(n,k)$ has not yet been established.
However, in \cite{KBSchl} the following formulae for small $k$
have been worked out.
\begin{subequations}\label{snk}
\begin{align}
\mathcal{S}_{a,b;q,p}(n,0)&=\delta_{n,0},\\
\mathcal{S}_{a,b;q,p}(n,1)&=1-\delta_{n,0},\\
\mathcal{S}_{a,b;q,p}(n,2)&=[2]_{a,b;q,p}^{n-1}-1,\\
\mathcal{S}_{a,b;q,p}(n,3)&=\frac 1{[2]_{aq^2,bq;q,p}}
\left([3]_{a,b;q,p}^{n-1}-[2]_{aq^2,bq;q,p}[2]_{a,b;q,p}^{n-1}+w_{a,b;q,p}(2)\right).
\end{align}
\end{subequations}
For $p\to0$, followed by $a\to 0$ and $b\to 0$,
these explicit evaluations can be easily seen to match
the special instances $k=0,1,2,3$ of Carlitz'~\cite[Equation~(3.3)]{C2}
well-known formula
\begin{equation}\label{carlitzexpl}
\mathcal S_q(n,k)=\frac 1{[k]_q!}\sum_{j=0}^k(-1)^jq^{\binom j2}
\begin{bmatrix}k\\j\end{bmatrix}_q[k-j]_q^n.
\end{equation}
As a matter of fact, the right-hand side of \eqref{carlitzexpl}
can also be rewritten in terms of basic hypergeometric series
(see \cite{GRhyp} for definitions and notation).
As such, the $q$-Stirling number of the second kind can be expressed
as the following multiple of a basic hypergeometric series of
Karlsson--Minton type:
\begin{equation}\label{stirlingkmtype}
\mathcal S_q(n,k)=\frac{[k]_q^ n}{[k]_q!}\,
_n\phi_{n-1}\!\left[\begin{matrix}
q^{1-k},q^{1-k},\dots,q^{1-k}\\q^{-k},\dots,q^{-k}\end{matrix};q,q^{k-n}\right].
\end{equation}
The existence of the latter series representation
is not so surprising, if one recalls that a big class of
($q$-)rook numbers generally admit a representation in terms
of (basic) hypergeometric series of Karlsson--Minton type, as revealed by
Haglund~\cite{H0}.

Coming back to our quest for finding an explicit formula in the elliptic case,
it is at this moment still not entirely clear how the pattern
in \eqref{snk} for the elliptic Stirling numbers can be extended to a
formula for $\mathcal S_{a,b;q,p}(n,k)$ valid for general $k$.

\subsection{Elliptic $r$-restricted Stirling numbers of the second kind}
\label{subsec:Str2}
The $r$-restricted Stirling numbers of the second kind count the number of
partitions of $[n]$ into $k$ blocks such that each of the first
$r$ numbers $1,2\dots,r$ is in a different block (cf.\ \cite{B}
or \cite{MS}).
The case $r=1$ (or $r=0$) gives the usual Stirling numbers of the second kind.
In the literature, the $r$-restricted  Stirling numbers of the second kind
are usually just called $r$-Stirling numbers of the second kind.
Nevertheless, in \cite[see sequences A143494, A143495 and A143496]{Sl}
they are referred to as ``$r$-restricted'', a terminology which we adopt here,
mainly to avoid confusion
with the $q$-Stirling numbers of the second kind.
These numbers admit a rook theoretic interpretation 
when $B$ is a cut-off staircase board
$\mathsf{St}^{(r)}_n=B(0,\dots,0,r,r+1,\dots,n-1)$
of $n$ columns, the first $r$ columns being empty.
The correspondence between the $n-k$ nonattacking
rook placements in $\mathsf{St}_n^{(r)}$ 
and the set partitions of $[n]$ in $k$ blocks works exactly in the same way
as for the board $\mathsf{St}_n$. Then the shape of the board $\mathsf{St}_n^{(r)}$ 
puts $1,2,\dots, r$ automatically in different blocks. 

We use $\mathsf{St}^{(r)}_n$ in \eqref{eqn:prodthm} to define an elliptic extension of 
the $r$-restricted Stirling numbers of the second kind.
For $b_i=0$ for $i=1,\dots,r$, and $b_i=i-1$ for $i=r+1,\dots, n$,
Equation~\eqref{eqn:prodthm} becomes
\begin{align}
&\left([z]_{a,b;q,p}\right)^{n-r}
\prod_{i=1}^r[z-i+1]_{aq^{2(i-1)},bq^{i-1};q,p}\notag\\&=
\sum_{k=0}^n r_{n-k}(a,b;q,p;\mathsf{St}^{(r)}_n)\,\prod_{j=1}^k
[z-j+1]_{aq^{2(j-1)},bq^{j-1};q,p}.\label{eqn:elptrStirling}
\end{align}
Defining $\mathcal{S}^{(r)}_{a,b;q,p}(n,k):=r_{n-k}(a,b;q,p;\mathsf{St}^{(r)}_n)$
to be the \emph{elliptic $r$-restricted Stirling numbers
of the second kind}, we obtain from Theorem~\ref{thm:elptrecur}
the following recursion
\begin{align}
\mathcal{S}^{(r)}_{a,b;q,p}(n,k)&=0\qquad\text{for $k<r-1$ or $k>n$},\notag\\
\mathcal{S}^{(r)}_{a,b;q,p}(r-1,r-1)&=1\qquad\text{(an artificial but
felicitous initial condition)},\notag\\\intertext{and,
for $k\ge r-1$,}
\mathcal S^{(r)}_{a,b;q,p}(n+1,k)&= W_{a,b;q,p}(k-1)\mathcal S_{a,b;q,p}^{(r)}(n,k-1)+
[k]_{a,b;q,p}\mathcal S_{a,b;q,p}^{(r)}(n,k).
\end{align}

\subsection{Elliptic Lah numbers}
The $q$-Lah numbers $\mathcal L_{n,k}(q)$ have first been studied
by Garsia and Remmel in \cite{GR0} by carrying out a $q$-counting of
placements of $n$ distinguishable balls in $k$ nonempty
indistinguishable tubes which have a linear order on its elements.
The same authors, in \cite{GR}, subsequently gave a rook theoretic
interpretation by considering the board
$\mathsf L_n=[n]\times[n-1]$ of $n$ columns, each of height $n-1$.
In this case, Proposition~\ref{thm:qrookthm} gives 
$$[z]_q\!\uparrow_n =\sum_{k=0}^n r_{n-k}(q;B)[z]_q\!\downarrow_k,$$
where $[z]_q\!\uparrow_n=[z]_q [z+1]_q\cdots [z+n-1]_q$ and
$[z]_q\!\downarrow_k=[z]_q[z-1]_q\cdots [z-k+1]_q$. If we let
$\mathcal{L}_{n,k}(q)=r_{n-k}(q;\mathsf L_n)$,
then the $q$-Lah numbers $\mathcal{L}_{n,k}(q)$ satisfy the recursion
\begin{equation}\label{reclah}
\mathcal{L}_{n+1,k}(q)=q^{n+k-1}\mathcal{L}_{n,k-1}(q)+
[n+k]_q \mathcal{L}_{n,k}(q).
\end{equation}
This can be established by placing $n+1-k$ nonattacking
rooks on $\mathsf L_{n+1}$ and 
distinguishing the cases whether there is a rook
or not in the union of the top row and the last column. 
If there is no such rook, we remove the top
row and last column (which contributes weight $q^{n+k-1}$) and
consider $n+1-k$ nonattacking rooks on the smaller board $\mathsf L_{n}$.
If, for $1\le j\le n+1$, there is a rook in the
$j$-th position of the
top row, the weight of the uncancelled cells coming from this rook
(which are located to the left of the rook) will be $q^{j-1}$.
We then remove the top row and $j$-th column and are left with
a smaller board on which $n-k$
nonattacking rooks are placed. If there is no rook
in the top row, there must be one in the last column (but not
on the most top of that column).
The weight of the uncancelled cells (the top row included)
coming from this rook
will be $q^{n+l-1}$, for some $2\le l\le k$, depending on the position
of the other $n-k$ rooks. The precise analysis is similar to that of
the proof of Lemma~\ref{lem:prod}.
After removing the top row and last column
(the possible weights adding up to $[n+k]_q$),
we are again left with a smaller board on which $n-k$
nonattacking rooks are placed.

Using the recursion in \eqref{reclah}, one can verify that
$\mathcal{L}_{n,k}(q)$ has the following closed form
\begin{equation}
\mathcal{L}_{n,k}(q)=q^{k(k-1)}\begin{bmatrix}n\\ k\end{bmatrix}_q
\frac{[n-1]_q !}{[k-1]_q !}.
\end{equation}

Now we turn to the elliptic setting.
For the board $\mathsf L_n$, Theorem~\ref{thm:elptprod} gives 
\begin{align}\label{eqn:elprook_rah1}
&[z+n-1]_{aq^{2-2n},bq^{1-n};q,p}
[z+n-2]_{aq^{4-2n},bq^{2-n};q,p}\cdots[z]_{a,b;q,p}\notag\\
&=\sum_{k=1}^n r_{n-k}(a,b;q,p;\mathsf L_n)\;
[z]_{a,b;q,p}[z-1]_{aq^{2},bq^{1};q,p}
\cdots
[z-k+1]_{aq^{2k-2},bq^{k-1};q,p}.
\end{align}
Let $\mathcal{L}_{n,k}(a,b;q,p)$ denote $r_{n-k}(a,b;q,p;\mathsf L_n)$.
This defines an elliptic analogue of Lah numbers and matches those which
have been defined and studied (in a different setting)
by Keresk\'{e}nyin\'{e} Balogh and the first author~\cite{KBSchl}.
Then by distinguishing whether there is a rook in the union
of the top row and the last column
of the board $\mathsf L_{n+1}$ or not,
we obtain the following recursion for
$\mathcal{L}_{n,k}(a,b;q,p)$ :
\begin{align}\label{recelllah}
\mathcal{L}_{n+1,k}(a,b;q,p)=
W_{aq^{-2n},bq^{-n};q,p}(n+k-1)\,\mathcal{L}_{n, k-1}(a,b;q,p)&\notag\\
+[n+k]_{aq^{-2n},bq^{-n};q,p}\,\mathcal{L}_{n,k}(a,b;q,p)&.
\end{align}
Unfortunately, this elliptic analogue of Lah number does not have a
nice closed form, but if we let $p\to 0$, followed by $b\rightarrow 0$,
then it has the following closed form
\begin{equation}
\mathcal{L}_{n,k}(a;q)=q^{\binom k2-\binom n2-n(k-1)}
\begin{bmatrix}n\\k\end{bmatrix}_q
\frac{[n-1]_q!}{[k-1]_q !}
\frac{(aq^{k-n+1};q)_{n+k}}{(aq^{3-2n};q^2)_n(aq^2;q^2)_k},
\end{equation}
the formula being a consequence of the
$(l,m,k)\mapsto(n,n-1,n-k)$ case of Proposition~\ref{prop:aqrect}.
It is not difficult to verify that the $a;q$-Lah numbers
$\mathcal{L}_{n,k}(a;q)$ converge to the $q$-Lah numbers $\mathcal{L}_{n,k}(q)$
when $a\rightarrow \infty$.


\begin{remark}
Goldman, Joichi and White \cite{GJW} observed that if
the left-hand sides of the product formula \eqref{eqn:rookthm}
are equal for two different Ferrers boards $B_1$ and $B_2$,
then also the rook numbers for $B_1$ and $B_2$ must be the same.
In this case the two Ferrers boards $B_1$ and $B_2$ are called
\emph{rook equivalent}. By appealing to the $q$-analogue of the factorization
theorem stated in Proposition~\ref{thm:qrookthm},
Garsia and Remmel~\cite{GR}
observed that Goldman, Joichi and White's observation
readily extends to the $q$-case, i.e., two Ferrers boards that have
the same rook numbers must also have the same $q$-rook numbers.
For instance, the $q$-Lah number $\mathcal{L}_{n,k}(q)$ can also be
obtained as the $q$-rook number of the Ferrers board $B(0,2,4,\dots,2n-2)$.
Theorem \ref{thm:elptprod} guarantees that this further
extends to the elliptic case, namely, two rook equivalent Ferrers boards have
the same elliptic rook numbers. In particular, 
\begin{equation*}
\mathcal{L}_{n,k}(a,b;q,p)=r_{n-k}(a,b;q,p;B_1)
=r_{n-k}(a,b;q,p;B_2),
\end{equation*}
for $B_1=\mathsf L_n=(n-1,\dots,n-1)$ ($n$ occurrences of $n-1$) and
$B_2=B(0,2,4,\dots,2n-2)$. This appears to be not at all obvious
from the combinatorial interpretation.
\end{remark}

\subsection{Elliptic $r$-restricted Lah numbers}

The $r$-restricted Lah numbers count the number of
placements of the elements $1,2,\dots,n$ into $k$ nonempty tubes
of linearly ordered elements such that $1,2,\dots,r$ are in distinct tubes,
cf.\ \cite{NR} or \cite{MS}.
The case $r=1$ (or $r=0$) gives the usual unsigned Lah numbers.
In the literature, the $r$-restricted Lah numbers are usually just called
$r$-Lah numbers. We use ''restricted'', in accordance
with the terminology used in
\cite[see sequences A143497, A143498 and A143499]{Sl},
to avoid confusion with the $q$-Lah numbers.
These numbers admit a rook theoretic interpretation 
when $B$ is the board
$\mathsf L^{(r)}_n=[n+r-1]\times[n-r]$
of $n+r-1$ columns, each of height $n-r$. In the following,
we describe a simple correspondence between the
rook configurations $P$ of $n-k$ nonattacking rooks on the
board $B=[n+r-1]\times[n-r]$ and the set of placements $T$
of the elements  $1,2,\dots,n$ into $k$ nonempty tubes of linearly ordered
elements such that the first $r$ numbers $1,2,\dots,r$ are in distinct tubes:
given a rook configuration of $n-k$ nonattacking rooks
on $\mathsf L^{(r)}_n$, we have $n-k$ rows containing rooks and
$k-r$ rows containing no rooks.
We start with the trivial tube placement $T_0=\{(1),(2),\dots,(r)\}$
of singletons and want to successively build up the
final tube placement by adding an element for each of the $n-r$ rows,
depending on the existence and the positions of the rooks.
Now, as mentioned, there are exactly $k-r$ rows without
rooks, say in rows $l_1,\dots,l_{k-r}$ (without
loss of generality, we may assume
$n-r\ge l_1>l_2>\dots>l_{k-r}\ge 1$).
These indices will determine the minimal elements of the
new tubes which we append to $T_0$. These $k-r$ additional
elements shall remain minimal elements, and we shall refer to them
as \emph{designated tube leaders}.
(On the contrary, the elements $1,2,\dots,r$ do not necessarily remain as
tube leaders in the final tube placement $T$.)
We thus replace $T_0$ by
$T_1=\{(1),(2),\dots,(r),(n+1-l_1),\dots,(n+1-l_{k-r})\}$,
and that is a new placement of exactly $k$ tubes
of singletons. The $n-k$ remaining rows in $P$ contain rooks
and are indexed by $[n-r]\setminus\{l_1,\dots,l_{k-r}\}$.
We add $r$ to each of these indices, thus obtain
the index set
$I=([n]\setminus[r])\setminus\{r+l_1,\dots,r+l_{k-r}\}$
which contain exactly the numbers of $[n]$ which have not
been already used in the tube $T_1$.
We remove the top-most rook in $P$, say $\mathbf r_1$,
and identity it with the smallest element in $I$, say $\iota_1$.
Since there are $n+r-1$ columns in $B$ of which $n-k-1$
columns contain rooks below $\mathbf r_1$, there are
exactly $k+r$ possibilities for $\mathbf r_1$ to be
placed in its row. On the other side, there are exactly
$(n+r-1)-(n-k-1)=k+r$ possible positions for the smallest
element $\iota_1$
of $I$ to be added to $T_1$. That is, $\iota_1$ can be placed
on top of any element (which gives $k$ possibilities),
or below any element except the $k-r$ designated
tube leaders (which gives $r$ additional possibilities).
In total we have $k+r$ possible positions to insert
$\iota_1$ in $T_1$, after which we obtain $T_2$.
We now remove $\mathbf r_1$ from $P$ and also delete
$\iota_1$ from $I$. In $P$, we turn to the next row from
the top containing a rook, say $\mathbf r_2$, remove it
and identify it with the next smallest element in
$I\setminus\{\iota_1\}$ which we label $\iota_2$.
Now there are $k+r+1$ possibilities
to place $\mathbf r_2$ in its row, and there are also
exactly $k+r+1$ possibilities for $\iota_2$ to be inserted
in $T_2$. We iterate this, and in the end have
$n+r-1$ possibilities for the $(n-k)$-th rook, say
$\mathbf r_{n-k}$ to be placed in its row, and accordingly,
$n+r-1$ possible positions to insert the maximal element
of $I$, say $\iota_{n-k}$, in the placement $T_{n-k}$
of tubes after which we finally obtain the final tube placement $T$.

In total we have
$$
\binom{n-r}{k-r}\frac{(n+r-1)!}{(k+r-1)!}=\binom{n+r-1}{k+r-1}\frac{(n-r)!}{(k-r)!}
$$
such placements. This number matches the $r$-restricted Lah number.

For a concrete example of a rook configuration mapped to a placement
of elements in tubes, see Figure~\ref{fig:rlah}, where we have chosen
$n=8$, $r=2$, $k=4$.
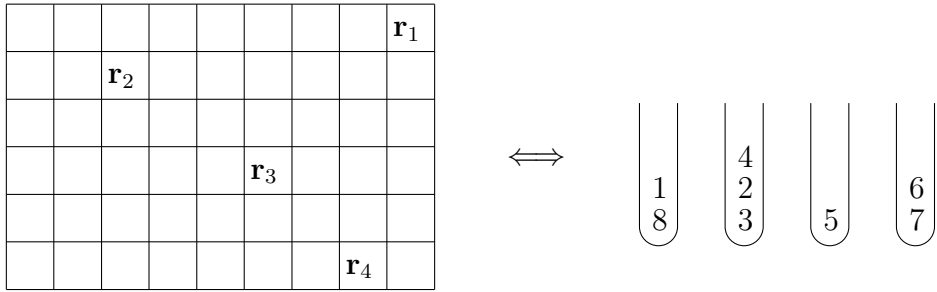
\begin{figure}[ht]
\begin{picture}(135,100)(15,15)
\multiput(15,15)(0,15){7}{\line(1,0){135}}
\multiput(15,15)(15,0){10}{\line(0,1){90}}
\put(137, 95){$\mathbf{r}_1$}
\put(47, 80){$\mathbf{r}_2$}
\put(92, 50){$\mathbf{r}_3$}
\put(122, 20){$\mathbf{r}_4$}
\end{picture}
\qquad\raisebox{4em}{$\Longleftrightarrow$}\qquad\quad
\raisebox{6em}{\begin{picture}(100,40)(10,10)
\put(10,10){\oval(12,90)[b]}
\put(37,10){\oval(12,90)[b]}
\put(64,10){\oval(12,90)[b]}
\put(91,10){\oval(12,90)[b]}
\put(8,-30){$8$}
\put(8,-20){$1$}
\put(35,-30){$3$}
\put(35,-20){$2$}
\put(35,-10){$4$}
\put(62,-30){$5$}
\put(89,-30){$7$}
\put(89,-20){$6$}
\end{picture}}
\caption{$n=8$, $r=2$, $k=4$; $B=[9]\times[6]$}\label{fig:rlah}
\end{figure}
We consider a placement of $4$ nonattacking rooks on $B=[9]\times[6]$,
the rooks being in the cells $(9,6)$, $(3,5)$, $(6,3)$, and $(8,1)$
(from top to bottom).
We start with putting the numbers $1$ and $2$ into the (the first)
two distinct tubes. Since the third and fifth row from the top of $B$
contain no rooks, we put the third and fifth smallest numbers not
already used, i.e.\ $5$ and $7$, into the third and fourth tubes,
respectively.
The numbers $5$ and $7$ are designated tube leaders. They will remain
to be the minimal elements of their tubes.
Now consider the top-most rook, $\mathbf r_1$, which is in $(9,6)$.
If we remove it and decide to put it back into the top row,
we have $6$ possibilities. The possible positions are
$(1,6)$, $(2,6)$, $(4,6)$, $(5,6)$, $(7,6)$ and $(9,6)$.
From these $(9,6)$ is the sixth position.
Accordingly, there are $6$ different positions for the smallest number
not already used in one of the tubes, i.e. $3$,
to be placed in one of the tubes.
The six different choices are as follows: the element $3$ can be put on
top of the element $1$, on top of the element $2$, on top of the
element $5$, on top of the element $7$, below the element $1$,
or below the element $2$. (Nothing can be put
below $5$ or below $7$ since they are designated tube leaders.)
We take the sixth option (as $(9,6)$ was the sixth possible position
of its row), which means that we put $3$ below $2$ (which is in the
second tube). We chop off the top row of the board and consider
the next rook from the top, $\mathbf r_2$, which is in position $(3,5)$.
This rook occupies the third possible position in its row, of $7$
possible positions in total.
Accordingly, we put the element $4$ (which is the smallest number
not already used in the tube placement) on top of the element $2$
which is the third possible position of $7$ possibilities (on top of
$1$, on top of $3$, on top of $2$, on top of $5$, on top of $7$, below $1$,
or below $2$.)
We again chop off the top row and also the empty row below it and
turn to the next rook from the top, $\mathbf r_3$, which is in
position $(6,3)$.
This rook occupies the sixth possible position in its row, of $8$
possible positions in total.
Accordingly, we put the element $6$ (which is the smallest number
not already used in the tube placement) on top of the element $7$
which is the sixth possible position of $8$ possibilities (on top of
$1$, on top of $3$, on top of $2$, on top of $4$, on top of $5$,
on top of $7$, below $1$, or below $3$.)
We again chop off the top row and also the empty row below it and turn
to the last remaining rook, $\mathbf r_4$, 
which is in position $(8,1)$. This is the eighth possible position
of $9$ possible positions in total. In the tube placement, the
eight possibility is the position below the element $1$.
Thus we put the last element not yet appearing in the placements of tubes,
i.e., the element $8$, below $1$. Finally we have arrived at the
placement $\{(8,1),(3,2,4),(5),(7,6)\}$, written as a set of ordered lists.
\begin{remark}
Note that in the case when $r=1$, this algorithm reduces to the case of the original
Lah numbers $\mathcal{L}_{n,k}$ which counts the number of ways placing 
$n$ distinguishable balls in $k$ nonempty tubes.
Garsia and Remmel \cite{GR0} also provided a correspondence between 
$r_{n-k}(\mathsf L_n)$ and a placement of $n$ balls in $k$ tubes, 
but their correspondence is different from ours explained above. 
\end{remark}

We use the board $\mathsf L_n ^{(r)}$ in Theorem \ref{thm:elptprod} to define 
an elliptic analogue of the $r$-restricted Lah numbers.
For $B=\mathsf L_n^{(r)}=[n+r-1]\times[n-r]$, i.e.\ $b_i=n-r$
for $i=1,\dots,n+r-1$,
Theorem~\ref{thm:elptprod} becomes
\begin{align}
\sum_{k=2r-1}^{n+r-1}r_{n+r-1-k}(a,b;q,p;\mathsf L_n^{(r)})
\prod_{j=1}^k[z-j+1]_{aq^{2(j-1)},bq^{j-1};q,p}&\notag\\=
\prod_{i=1}^{n+r-1}[z+n-r-i+1]_{aq^{2(i-1-n+r)},bq^{i-1-n+r};q,p}&
\end{align}
where we can readily start the index of summation with $k=2r-1$ since the lower
terms vanish,
and after shifting the index as $k\mapsto k+r-1$ and cancelling common
factors on both sides of the sum we obtain
\begin{align*}
\sum_{k=r}^{n}r_{n-k}(a,b;q,p;\mathsf L_n^{(r)})
\prod_{j=1}^{k}[z-r-j+2]_{aq^{2(j+r-2)},bq^{j+r-2};q,p}&\notag\\=
\prod_{i=1}^{n-r}[z+n-r-i+1]_{aq^{2(i-1-n+r)},bq^{i-1-n+r};q,p}
\prod_{i=1}^{r}[z-r-i+2]_{aq^{2(i+r-2)},bq^{i+r-2};q,p}.&
\end{align*}
For a more compact result, we replace $(a,b,z)$ by $(aq^{2(1-r)},bq^{1-r},z+r-1)$,
after which we obtain
\begin{align}
\sum_{k=r}^{n}r_{n-k}(aq^{2(1-r)},bq^{1-r};q,p;\mathsf L_n^{(r)})
\prod_{j=1}^{k}[z-j+1]_{aq^{2(j-1)},bq^{j-1};q,p}&\notag\\=
\prod_{i=1}^{n-r}[z+n-i]_{aq^{2(i-n)},bq^{i-n};q,p}
\prod_{i=1}^{r}[z-i+1]_{aq^{2(i-1)},bq^{i-1};q,p}.&
\end{align}
The \emph{elliptic $r$-restricted Lah numbers}
$\mathcal{L}^{(r)}_{n,k}(a,b;q,p):=r_{n-k}(aq^{2(1-r)},bq^{1-r};q,p;\mathsf L^{(r)}_n)$
satisfy the recursion
\begin{align}\label{recellrlah}
\mathcal{L}^{(r)}_{n+1,k}(a,b;q,p)=
W_{aq^{-2n},bq^{-n};q,p}(n+k-1)\,\mathcal{L}^{(r)}_{n, k-1}(a,b;q,p)&\notag\\
+[n+k]_{aq^{-2n},bq^{-n};q,p}\,\mathcal{L}^{(r)}_{n,k}(a,b;q,p)&,
\end{align}
with initial conditions
\begin{align}
\mathcal{L}^{(r)}_{n,k}(a,b;q,p)&=0\qquad\text{for $k<r-1$ or $k>n$},\notag\\
\mathcal{L}^{(r)}_{r-1,r-1}(a,b;q,p)&=1\qquad\text{(an artificial but
felicitous initial condition)}.
\end{align}

As in the elliptic Lah-number case, this elliptic analogue of $r$-restricted Lah number
does not have a nice closed form, but if we let $p\to 0$, followed by
$b\rightarrow 0$,
then it has the following closed form
\begin{equation}
\mathcal{L}^{(r)}_{n,k}(a;q)=
q^{\binom k2-\binom n2-n(k-1)+2\binom r2}
\begin{bmatrix}n+r-1\\k+r-1\end{bmatrix}_{q}
\frac{[n-r]_q !}{[k-r]_q !}
\frac{(aq^{1-n+k} ;q)_{n-k}(aq^{1+2r};q^2)_{k-r}}{(aq^{3-2n};q^2)_{n-r}},
\end{equation}
the formula being a consequence of the
$(a,l,m,k)\mapsto(aq^{2(1-r)},n+r-1,n-r,n-k)$ case of
Proposition~\ref{prop:aqrect}.
For $a\rightarrow \infty$ this $a;q$-analogue of
$r$-restricted Lah numbers $\mathcal{L}^{(r)}_{n,k}(a;q)$
converges to the following $q$-analogue of $r$-restricted Lah numbers
$\mathcal{L}^{(r)}_{n,k}(q)$ 
\begin{equation}
\mathcal{L}^{(r)}_{n,k}(q)=
q^{k(k-1)-r(r-1)}
\begin{bmatrix}n+r-1\\k+r-1\end{bmatrix}_{q}
\frac{[n-r]_q !}{[k-r]_q !}.
\end{equation}

\subsection{ $\mathfrak{p},q$-Analogues}\label{subsec:pq}
Briggs and Remmel~\cite{BR} defined the $\mathfrak{p},q$-analogue\footnote{In
this subsection, we use the Fraktur letter $\mathfrak{p}$ to denote
the second base variable instead
of the common Latin-script letter $p$ since in our elliptic setting,
we have reserved $p$ to denote the $nome$.}
of rook numbers by using the (homogeneous)
$\mathfrak{p},q$-analogue of $n$ and $n!$
defined by
$$[n]_{\mathfrak{p},q}:=\mathfrak{p}^{n-1}+\mathfrak{p}^{n-2}q +\cdots +
\mathfrak{p}q^{n-2}+q^{n-1}=\frac{\mathfrak{p}^n - q^n}{\mathfrak{p}-q}$$
and $[n]_{\mathfrak{p},q}!=[n]_{\mathfrak{p},q}[n-1]_{\mathfrak{p},q}\cdots
[1]_{\mathfrak{p},q}$. They in particular proved that for a
Ferrers board $B=B(b_1,\dots, b_n)\subseteq [n]\times \mathbb N$,
one has
\begin{subequations}
\begin{align}
\prod_{i=1}^n [z+b_i-(i-1)]_{\mathfrak{p},q}=\sum_{k=0}^n r_{k,n}(B,\mathfrak{p},q)\,p^{zk+\binom{k+1}{2}}
\prod_{i=0}^{n-k}[z-i+1]_{\mathfrak{p},q}
,
\end{align}
where
\begin{equation}
r_{k,n}(B,\mathfrak{p},q):=\sum_{P\in \mathcal{N}_k (B)}
q^{\alpha_B (P)+\epsilon_B (P)}\mathfrak{p}^{\beta_B (P)-(c_1+\cdots + c_k)},
\end{equation}
\end{subequations}
for specifically defined $\alpha_B (P)$, $\beta_B (P)$ and $\epsilon_B (P)$,
and where the $c_1,c_2,\dots,c_k$ are the column labels of the $k$ columns
containing rooks of $P$. See \cite{BR} for the full details.

If in \eqref{def:smallelpwt} we let $p=0$ and replace $q$ by
$q/\mathfrak{p}$, then the small weight function becomes
\begin{equation}\label{def:pqweight}
w_{a,b;q/\mathfrak{p},0}(k)=\frac{(\mathfrak{p}^{2k+1}-aq^{2k+1})
(\mathfrak{p}^k-bq^k)(b\mathfrak{p}^{k-2}-aq^{k-2})}
{(\mathfrak{p}^{2k-1}-aq^{2k-1})(\mathfrak{p}^{k+2}-bq^{k+2})
(b\mathfrak{p}^k-aq^k)}\mathfrak{p}q,
\end{equation}
while the $\mathfrak{p},q$-numbers become
\begin{equation}\label{ellnpq}
[z]_{a,b;q/\mathfrak{p},0}=\frac{(\mathfrak{p}^z-q^z)(\mathfrak{p}^z-aq^z)
(\mathfrak{p}^2-bq^2)(b-a)}
{(\mathfrak{p}-q)(\mathfrak{p}-aq)(\mathfrak{p}^{z+1}-bq^{z+1})
(b\mathfrak{p}^{z-1}-aq^{z-1})}.
\end{equation}

The use of this weight in Theorem \ref{thm:elptprod} yields
an $a,b$-extension of the above result
of Briggs and Remmel. By utilizing the weight function in
\eqref{def:pqweight} with the
staircase board $\mathsf{St}_n$, an $a,b$-extension of the
$\mathfrak{p},q$-Stirling numbers defined by Wachs and White~\cite{WaW}
can be obtained.

\section{$\J$-attacking rook model}\label{sec:j}

We develop an elliptic analogue of the \emph{$\J$-attacking rook model}
of Remmel and Wachs \cite{RW}. We recall their setting first.
For a fixed integer $\J\ge 1$, we say that a Ferrers board
$B(b_1,\dots, b_n)$ is a \emph{$\J$-attacking board} if for all
$1\le i<n$, $b_i\ne 0$ implies $b_{i+1}\ge b_i +\J-1$.
Suppose that $B(b_1,\dots, b_n)$ is a $\J$-attacking board and $P$ 
is a placement of rooks in $B(b_1,\dots, b_n)$ which has at most one
rook in each column of $B(b_1,\dots, b_n)$. Then for any individual
rook $\mathbf{r}\in P$, 
we say that $\mathbf{r}$ \emph{$\J$-attacks} a cell
$c\in B(b_1,\dots, b_n)$ if $c$ lies in a column which is
strictly to the right of the column of $\mathbf{r}$ and $c$
lies in the first $\J$ rows which are weakly above the row of
$\mathbf{r}$ and which are not $\J$-attacked by any rook which
lies in a column that is strictly to the left of 
$\mathbf{r}$. Figure~\ref{fig:jattack} shows an example of
$\J$-attack when $\J=2$. In Figure \ref{fig:jattack}, the cells which are attacked by the rook
$\mathbf{r}_i$ are denoted by $i$ in the cell. 
\begin{figure}[ht]
\begin{picture}(80,100)(10,10)
\multiput(10,10)(0,10){2}{\line(1,0){70}}
\multiput(20,30)(0,10){1}{\line(1,0){60}}
\multiput(30,40)(0,10){1}{\line(1,0){50}}
\multiput(40,50)(0,10){2}{\line(1,0){40}}
\multiput(50,70)(0,10){2}{\line(1,0){30}}
\multiput(60,90)(0,10){1}{\line(1,0){20}}
\multiput(70,100)(0,10){1}{\line(1,0){10}}
\multiput(10,10)(10,0){1}{\line(0,1){10}}
\multiput(20,10)(10,0){1}{\line(0,1){20}}
\multiput(30,10)(10,0){1}{\line(0,1){30}}
\multiput(40,10)(10,0){1}{\line(0,1){50}}
\multiput(50,10)(10,0){1}{\line(0,1){70}}
\multiput(60,10)(10,0){1}{\line(0,1){80}}
\multiput(70,10)(10,0){2}{\line(0,1){90}}
\put(22, 23){$\mathbf{r}_1$}
\put(32,22){$1$}
\put(42,22){$1$}
\put(52,22){$1$}
\put(62,22){$1$}
\put(72,22){$1$}
\put(32,32){$1$}
\put(42,32){$1$}
\put(52,32){$1$}
\put(62,32){$1$}
\put(72,32){$1$}
\put(41, 13){$\mathbf{r}_2$}
\put(52, 12){$2$}
\put(62, 12){$2$}
\put(72, 12){$2$}
\put(52, 42){$2$}
\put(62, 42){$2$}
\put(72, 42){$2$}
\put(61, 63){$\mathbf{r}_3$}
\put(72, 62){$3$}
\put(72, 72){$3$}
\end{picture}
\caption{$\J=2$, $B=B(1,2,3,5,7,8,9)$.}\label{fig:jattack}
\end{figure}
Let a rook $\mathbf{r}$ in $B(b_1,\dots, b_n)$ cancel the cells below
it and the cells which are $\J$-attacked by $\mathbf{r}$.
A placement $P$ of $k$ rooks in $B$ is called 
\emph{$\J$-nonattacking} if each column contains at most one rook and each rook
does not $\J$-attack other rooks.
Given a $\J$-attacking board $B$, we let $\mathcal{N}_k ^\J (B)$
be the set of all placements $P$ of $k$ $\J$-nonattacking rooks in $B$. 
  
Let $B=B(b_1,\dots, b_n)$ be a $\J$-attacking board. For any
placement $P\in\mathcal{N}_k ^\J (B)$, denote the number of uncancelled
cells in $B-P$ as $u_B ^{\J}(P)$. We define the $q$-rook number of $B$ by 
$$r_k ^{\J} (q;B)=\sum_{P\in \mathcal{N}_k ^\J(B)}q^{u_B ^{\J}(P)}.$$
Then Remmel and Wachs \cite{RW} proved the following product formula.  

\begin{theorem}\cite{RW}\label{thm:RW}
Let $B=B(b_1,\dots, b_n)$ be a $\J$-attacking board. Then 
$$\prod_{i=1}^n [z+b_i -\J(i-1)]_q =
\sum_{k=0}^n r_{n-k} ^\J (q;B)[z]_q \!\downarrow_{k,\J },$$
where $[z]_q\!\downarrow_{0,\J}=1$ and for $k>0$,
$[z]_q \!\downarrow_{k,\J}=[z]_q [z-\J]_q\cdots [z-(k-1)\J]_q$. 
\end{theorem}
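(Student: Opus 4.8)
The plan is to imitate the two-fold counting argument that underlies Proposition~\ref{thm:qrookthm} (and its elliptic refinement Theorem~\ref{thm:elptprod}), adapted to the $\J$-attacking setting of Remmel and Wachs~\cite{RW}. First I would note that, for fixed $q$, both sides of the asserted identity are polynomials in $q^z$ of degree at most $n$ --- every factor appearing on either side is affine in $q^z$ --- so it suffices to verify the identity for all sufficiently large positive integers $z$, the general case then following by interpolation. Fix such a $z$ and append below $B=B(b_1,\dots,b_n)$ a block of $z$ fresh rows of width $n$; call the result $B_z$ and call the line separating the original cells from the new ones the \emph{ground}. For $z$ large, $B_z$ is again a $\J$-attacking board with $n$ columns, the $i$-th of height $b_i+z$, so every placement $P\in\mathcal{N}_n^\J(B_z)$ has exactly one rook in each column. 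The idea is to evaluate
$$\sum_{P\in\mathcal{N}_n^\J(B_z)}q^{u_B^\J(P)}$$
--- the weighted count in which only uncancelled cells lying in $B$ itself are recorded --- in two ways.

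For the first evaluation, insert the rooks column by column from left to right. When the $i$-th rook is placed, the $\J$-attack cancellation from the $i-1$ rooks already sitting in columns $1,\dots,i-1$ has removed exactly $\J(i-1)$ cells from column $i$ (the $\J$-attacking hypothesis together with the largeness of $z$ guarantees there is always enough room, so these cancellations really are ``fresh'' in each column), leaving $b_i+z-\J(i-1)$ admissible rows and contributing a factor $[z+b_i-\J(i-1)]_q$; the product over $i$ is the left-hand side. This is the $\J$-analogue of Lemma~\ref{lem:prod} and Proposition~\ref{prop:q1}. For the second evaluation, classify $P$ by the number $k$ of its rooks lying strictly below the ground. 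For fixed $k$, the $n-k$ rooks of $P$ in $B$ form a placement in $\mathcal{N}_{n-k}^\J(B)$, and one argues that the $q$-weight contributed by the uncancelled cells of $B$, summed over all admissible ways to adjoin the $k$ below-ground rooks, factors as $r_{n-k}^\J(q;B)$ times the contribution of the below-ground rooks alone. That contribution is computed by inserting the below-ground rooks column by column: the $j$-th one is $\J$-attacked by the previous $j-1$ in $\J(j-1)$ of the $z$ available rows, hence ranges over $z-\J(j-1)$ rows and contributes $[z-\J(j-1)]_q$, giving $[z]_q\!\downarrow_{k,\J}$ in total. Summing over $k$ yields the right-hand side, and equating the two evaluations proves the theorem.

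The hard part will be justifying the decoupling used in the second evaluation: one must show that whether a given cell of $B$ is cancelled in $P$ depends only on the configuration of the rooks of $P$ lying in $B$, not at all on the positions of the below-ground rooks, provided the insertion order and the convention of measuring rook positions from the bottom are set up correctly. This is exactly the independence lemma at the heart of \cite{RW} (and, in spirit, of \cite{GR}), and verifying its $\J$-attacking version is the main technical content. A viable alternative, bypassing the augmented board, is to establish directly a one-column recursion $r_k^\J(q;B\cup m)=(\text{cancellation weight})\,r_k^\J(q;B)+(\text{$q$-number})\,r_{k-1}^\J(q;B)$ by distinguishing whether the newly adjoined column of height $m$ carries a rook --- the $\J$-analogue of Proposition~\ref{thm:recur} and Theorem~\ref{thm:elptrecur} --- and then induct on the number of columns; there the obstacle migrates to identifying the exact cancellation weight and $q$-number produced by rooks that $\J$-attack into the new column.
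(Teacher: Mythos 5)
Your overall strategy --- extend $B$ downward by $z$ rows, then evaluate a weighted sum over $\mathcal{N}_n^\J(B_z)$ once column by column and once by splitting the rooks above/below the ground --- is precisely the route the paper takes to prove the elliptic generalization \eqref{eqn:elptjump}, of which the present statement is the degenerate case; so the plan is the right one. However, as written it has a genuine gap in two places. First, the statistic you fix does not support either of your evaluations: if $u_B^\J(P)$ records only uncancelled cells lying in $B$ itself, then once the rook of column $i$ descends below the ground every further descent leaves the exponent of $q$ unchanged, so the column sum cannot equal $[z+b_i-\J(i-1)]_q$, and likewise the $k$ below-ground rooks would contribute a mere multiplicity $z(z-\J)\cdots(z-(k-1)\J)$ rather than $[z]_q\!\downarrow_{k,\J}$. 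What must be weighted is the full set of uncancelled cells of the extended board $B_z$, below-ground cells included (this is exactly what the weight \eqref{eqn:jwt} does in the paper's elliptic proof); the below-ground uncancelled cells are the source of the $q$-powers in both factors.

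Second, the ``independence lemma'' you defer as the hard part is not a lemma to be proved but a convention to be imposed, and in the form you state it it is false under the unmodified attack rule: since $\J$-attacks go weakly upward, a rook placed within $\J-1$ rows below the ground attacks cells of $B$ in later columns, so whether a cell of $B$ is cancelled would then depend on the below-ground rooks, the restriction of $P$ to $B$ would not carry weight $q^{u_B^\J(P\cap B)}$, and the below-ground factor would not telescope to $[z]_q[z-\J]_q\cdots$. The fix, adopted by Remmel--Wachs and spelled out in the paper's proof of \eqref{eqn:elptjump}, is to decree that a rook below the ground $\J$-attacks only cells below the ground, its attack wrapping downward when fewer than $\J$ unattacked rows are available between it and the ground. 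With that convention the decoupling is immediate (rooks in $B$ attack only cells of $B$, below-ground rooks never touch $B$), the $j$-th below-ground rook sees exactly $z-\J(j-1)$ free rows in its column, and both of your evaluations go through verbatim. Until the statistic is corrected and this convention is specified, the proposal does not actually compute either side of the identity.
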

\begin{remark}
Remmel and Wachs defined $(\mathfrak{p},q)$-rook numbers including one more
parameter $\mathfrak{p}$. Here we have set $\mathfrak p=1$. As we remarked in 
Section \ref{subsec:pq}, we can modify the weight function to recover 
or extend the $\mathfrak{p},q$-rook numbers.
\end{remark}

Now we establish an elliptic analogue of the $\J$-attacking rook model.
Given a $\J$-attacking board $B=B(b_1,\dots, b_n)$ and a placement
$P\in \mathcal{N}_k ^\J (B)$, let $U_B ^{\J}(P)$ be the set of uncancelled
cells in $B-P$. Then define 
$$wt^{\J}(P)=\prod_{(i,j)\in U_B ^{\J}(P)}w_{a,b;q,p}(\J (i-1)+1-j-\J r_{(i,j)}(P)),$$
where $r_{(i,j)}(P)$ is the number of rooks in $P$ which are in the
north-west region of $(i,j)$, and define the $k$-th rook number of $B$ by
$$r_{k}^{\J}(a,b;q,p;B)=\sum_{P\in\mathcal{N}_k ^\J (B) }wt^{\J}(P).$$
Then we have the following elliptic analogue of Theorem \ref{thm:RW}.

\begin{theorem}
Let $B=B(b_1,\dots, b_n)$ be a $\J$-attacking board. Then we have 
\begin{align}\label{eqn:elptjump}\notag
&\prod_{i=1}^n [z+b_i -\J(i-1)]_{aq^{2(\J (i-1) -b_i )},bq^{ \J (i-1) -b_i };q,p}\\
&=\sum_{k=0}^n r_{n-k}^{\J}(a,b;q,p;B)
\prod_{j=1}^k [z-\J (j-1)]_{aq^{2\J( j-1)},bq^{\J (j -1)};q,p}.\qquad\qquad\qquad
\end{align}
\end{theorem}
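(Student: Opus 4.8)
The plan is to run the proof of Theorem~\ref{thm:elptprod} with $\J$-attacking cancellation in place of ordinary rook cancellation; only the combinatorial bookkeeping changes. As in Theorem~\ref{thm:elptprod}, it suffices to establish \eqref{eqn:elptjump} for $z\in\mathbb N$, the general case following by analytic continuation.

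Fix such a $z$ and let $B_z$ be the board obtained from $B$ by appending $z$ rows below it (indexed $0,-1,\dots,-z+1$), with ground $\mathfrak g$ separating $B$ from the new strip --- the $\J$-attacking counterpart of the extended board used in Lemma~\ref{lem:prod}. I would compute $\sum_{P\in\mathcal N_n^{\J}(B_z)}wt^{\J}(P)$ in two ways. The first evaluation rests on a $\J$-attacking analogue of Lemma~\ref{lem:prod}: if $Q$ is a $\J$-nonattacking placement of $t$ rooks occupying the first $i-1$ columns of $B_z$, then
\begin{equation*}
\sum_{P}wt^{\J}(P)=[\,b_i+z-\J t\,]_{aq^{2(\J(i-1)-b_i)},\,bq^{\J(i-1)-b_i};\,q,p}\;wt^{\J}(Q),
\end{equation*}
the sum being over all $\J$-nonattacking placements $P$ obtained from $Q$ by inserting a rook in column $i$. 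Granting this and iterating over $i=1,\dots,n$ (each column of $B_z$ carries exactly one rook in an $n$-rook placement, so $t$ runs through $0,1,\dots,n-1$) produces $\prod_{i=1}^n[\,z+b_i-\J(i-1)\,]_{aq^{2(\J(i-1)-b_i)},bq^{\J(i-1)-b_i};q,p}$, which is the left-hand side of \eqref{eqn:elptjump}. The second evaluation splits $P\in\mathcal N_n^{\J}(B_z)$ according to the number $k$ of its rooks lying below $\mathfrak g$: the $n-k$ rooks in $B$ account for $r_{n-k}^{\J}(a,b;q,p;B)$, while the $k$ rooks below $\mathfrak g$, read off from the top down and again handled by the $\J$-attacking analogue of Lemma~\ref{lem:prod} applied to the strip, account for $\prod_{j=1}^k[\,z-\J(j-1)\,]_{aq^{2\J(j-1)},bq^{\J(j-1)};q,p}$; summing over $0\le k\le n$ gives the right-hand side of \eqref{eqn:elptjump}, just as in the proof of Theorem~\ref{thm:elptprod}.

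The main obstacle is the $\J$-attacking analogue of Lemma~\ref{lem:prod}. When the new rook is inserted in column $i$, the admissible cells are those of column $i$ not $\J$-attacked by the $t$ earlier rooks, and the defining inequality ``$b_{i-1}\ne0$ implies $b_i\ge b_{i-1}+\J-1$'' of a $\J$-attacking board is precisely what forces those $\J$-attacked cells to occupy the bottommost $\J t$ rows of column $i$, so that the remaining $b_i+z-\J t$ admissible cells form a contiguous block at the top. If the rook is placed at the $(s{+}1)$-st admissible cell from the top, it leaves $s$ uncancelled cells above it, and the delicate point is to verify that, for each such cell $(i,j)$, the weight argument $\J(i-1)+1-j-\J r_{(i,j)}(P)$ is exactly the one making the product of the $s$ weights telescope --- through the recursion \eqref{recelln}, after a shift governed by \eqref{wshift} --- to the claimed elliptic number; here the coefficient $\J$ in front of $r_{(i,j)}(P)$ is dictated by the fact that every rook to the north-west now cancels $\J$ rows rather than one. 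Once this telescoping is established, the remainder of the argument is formal, parallel to the non-attacking case. (An alternative route, bypassing the extended board, would be to first prove the $\J$-attacking analogue of the recursion in Theorem~\ref{thm:elptrecur} by analysing the last column and then derive \eqref{eqn:elptjump} by induction on $n$ using the elliptic-number identity \eqref{recellny}; the essential computation is the same.)
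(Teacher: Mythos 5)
Your overall plan is the same as the paper's: append a strip of $z$ rows below $B$, and double count $\sum_{P\in\mathcal N_n^{\J}(B_z)}wt^{\J}(P)$, once column by column (giving the left side) and once by splitting the placement at the ground (giving the right side). But the two points you defer are exactly where the work lies, and your supporting claim for the first one is false. The cells of column $i$ that are $\J$-attacked by the $t$ rooks to its left are in general \emph{not} the bottommost $\J t$ cells of the extended column, and the admissible cells do \emph{not} form a contiguous top block: already with $\J=2$, a single rook placed in column $1$ at row $1$ attacks rows $1$ and $2$ of every later column, so in the extended column $2$ the admissible cells are the above-ground cells in rows $\ge 3$ together with the whole below-ground part, separated by the attacked rows $1,2$ (which are far from bottommost). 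What the condition $b_{i+1}\ge b_i+\J-1$ actually guarantees is that each earlier rook attacks exactly $\J$ cells \emph{inside} column $i$ (its attack rows never overshoot the top of the column), so the count $b_i+z-\J t$ is right; contiguity is simply not available. Consequently the telescoping is not the trivial one over a contiguous block: one must show that the arguments $\J(i-1)+1-j-\J r_{(i,j)}(P)$ of the admissible cells, read from top to bottom, are consecutive integers starting at $\J(i-1)-b_i+1$, which amounts to checking that every rook north-west of an admissible cell accounts for exactly $\J$ attacked cells above it, while a rook below it attacks only rows below it (this uses that a rook attacks the \emph{lowest} available rows weakly above its own row); only then do \eqref{recelln} and \eqref{wshift} give the elliptic number. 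You correctly name this as the delicate point, but you neither prove it nor justify it correctly --- indeed, if your contiguity claim were true, the compensation term $\J r_{(i,j)}(P)$ would be pointless --- so the $\J$-analogue of Lemma~\ref{lem:prod}, which is the entire content of the theorem beyond Theorem~\ref{thm:elptprod}, is left unestablished.

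The second gap is that you never say how a rook placed below the ground $\J$-attacks. With the unmodified rule (attack the lowest non-attacked rows weakly above its own row, possibly crossing the ground) your second evaluation breaks: a below-ground rook could cancel cells of $B$ or attack a rook of $Q$, so the sum over all extensions of a fixed $Q\in\mathcal N_{n-k}^{\J}(B)$ would not factor as $wt^{\J}(Q)\prod_{j=1}^k[z-\J(j-1)]_{aq^{2\J(j-1)},bq^{\J(j-1)};q,p}$, which is precisely the factorization your right-hand side requires. The paper's proof therefore stipulates that a rook below the ground attacks only cells below the ground, with a wrap-around rule (if fewer than $\J$ non-attacked rows lie weakly above its row within the strip, it attacks the missing number of rows below its own row); this decouples the two regions and makes each below-ground rook remove exactly $\J$ below-ground cells from every later column, so that the $j$-th below-ground rook contributes $[z-\J(j-1)]$. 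A minor further point: the combinatorial argument needs $z\ge\J n$ (as in the paper), not merely $z\in\mathbb N$; for small $z$ some of the elliptic numbers involved have nonpositive argument and no longer record the column sums, so one proves \eqref{eqn:elptjump} for $z\ge\J n$ and then continues analytically.
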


\begin{proof}
The idea of proof is basically the same as in the proof of the product
formula in the case when $\J=1$. It is enough to prove \eqref{eqn:elptjump}
for all positive integers $z\ge \J n$. So fix a positive integer $z\ge \J n$
and consider $B_z$, the extended board obtained from $B$ by attaching
a $[n]\times[z]$ board below $B$.
We shall consider nonattacking placements of $n$ rooks
in $B_z$. Recall that we denoted the line separating the board $B$ and the
extended part below by $\mathfrak{g}$ and called it \emph{ground}. 
A rook $\mathbf{r}$ placed in $B$ will $\J$-attack as described above,
and thus it only $\J$-attacks cells which are above the ground.
If a rook $\mathbf{r}$ is placed below the ground, then it shall
$\J$-attack only the cells below the ground. More precisely, if a rook
$\mathbf{r}$ is placed below the ground $\mathfrak{g}$, then it
$\J$-attacks cells strictly to the right of the column containing
$\mathbf{r}$ and in the first $\J$ rows which are weakly above the
row containing $\mathbf{r}$ and below the ground which contain no cells
that are $\J$-attacked by any other rook $\mathbf{r}'$ to the left of
$\mathbf{r}$ if there are such $\J$ rows, and if there are $t<\J$ such rows,
then the rook $\mathbf{r}$ $\J$-attacks those $t$ rows and the first $\J-t$
rows below the row of $\mathbf{r}$ which contain no cells that are
$\J$-attacked by any other rooks to the left of $\mathbf{r}$.
Then we define that a rook placed below the ground cancels the cells below it
and the cells which are $\J$-attacked by the rook. 

Now let $\mathcal{N}_n ^\J(B_z)$ denote the set of all placements $P$ of
$n$ rooks in $B_z$ such that there is at most one rook in each row and
column and no rooks $\J$-attack another rook. For a placement
$P\in \mathcal{N}_n ^\J(B_z)$, let $U_{B_z} ^{\J}(P)$ be the set of
uncancelled cells in $B_z-P$. Then we show \eqref{eqn:elptjump}
by computing the sum 
\begin{equation}\label{eqn:jumppf}
\sum_{P\in \mathcal{N}_n ^\J (B_z)}wt^{\J}(P),
\end{equation}
where 
\begin{equation}\label{eqn:jwt}
wt^{\J}(P)=
\prod_{(i,j)\in U_{B_z} ^{\J}(P)}w_{a,b;q,p}(\J (i-1)+1-j-\J r_{(i,j)}(P)),
\end{equation}
in two different ways. 
First, we place rooks column by column from the left to right and compute
the contribution to the sum. If we place a rook in the first column
in all possible ways, it gives the weights 
\begin{align*}
&1+w_{a,b;q,p}(1-b_1)+\cdots +w_{a,b;q,p}(1-b_1)\cdots w_{a,b;q,p}(z-1)\\
={}& 1+w_{aq^{-2b_1},bq^{-b_1}}(1)+ w_{aq^{-2b_1},bq^{ -b_1}}(1)\cdots
w_{aq^{-2b_1},bq^{-b_1}}(b_1+z-1)\\
={}& [b_1 +z]_{aq^{-2b_1},bq^{ -b_1};q,p}.
\end{align*}
This first rook cancels $\J$ rows to the right of it and so,
placing the second rook in the second column gives
$[b_2 +z-\J]_{aq^{2(\J-b_2)},bq^{\J -b_2}}$. Note that the weights sum up
to the elliptic integer since the factor $\J r_{(i,j)}(P)$ compensates
the possible gap in the row coordinates due to the cancellation from
the rook to the left. Placing $n$ rooks in this way gives the
left-hand side of \eqref{eqn:elptjump}. 

For the right hand side of
\eqref{eqn:elptjump}, we place $n-k$ rooks in $B$ and $k$ rooks in the
extended part below the ground and compute \eqref{eqn:jumppf}.
Fix a placement $\mathcal{Q}$ of $n-k$ rooks in $B$. We want to compute 
$$W(\mathcal{Q})=\sum_{\substack{P\in \mathcal{N}_n ^{\J}(B_z),\\
P\cap B=\mathcal{Q}}}
wt^\J (P).$$
We put $wt^\J (\mathcal{Q})$ for the weight contribution coming from
the uncancelled cells in $B$ and compute the weight coming from the
uncancelled cells below the ground. Let $s$ denote the first available
column coordinate for the first rook below the ground. This means that there
are $s-1$ rooks in the north-west region of this column.
Then the possible placements of the first rook in this column give
\begin{equation*}
1+w_{a,b;q,p}(\J (s-1)+1 -\J (s-1))+\cdots +
w_{a,b;q,p}(1)\cdots w_{a,b;q,p}(z-1)
= [z]_{a,b;q,p}.
\end{equation*}
This rook cancels $\J$ rows to the right and so the second rook
contributes $[z-\J]_{aq^{2\J}, bq^{\J };q,p}$. Finally, placing $k$ rooks
below the ground gives 
$$\prod_{j=1}^k [z-\J (j-1)]_{aq^{2\J (j-1)},bq^{\J( j -1)};q,p}$$
to $wt^\J (P)$. Hence, 
\begin{align*}
\sum_{\mathcal{Q}\in \mathcal{N}_{n-k}^\J (B)}W(\mathcal{Q}) &=
\sum_{\mathcal{Q}\in \mathcal{N}_{n-k}^\J (B)}
\sum_{\substack{P\in \mathcal{N}_n ^{\J}(B_z),\\P\cap B=\mathcal{Q}}}wt^\J (P)\\
&= \sum_{\mathcal{Q}\in \mathcal{N}_{n-k}^\J (B)} wt^\J (\mathcal{Q})
\prod_{j=1}^k [z-\J (j-1)]_{aq^{2\J (j-1)},bq^{\J (j -1)};q,p}\\
&= r_{n-k} ^\J (a,b;q,p;B) \prod_{j=1}^k [z-\J (j-1)]_{aq^{2\J( j-1)},bq^{\J (j -1)};q,p}.
\end{align*}
We get the right hand side of \eqref{eqn:elptjump} by
summing this over $k=0,\dots, n$. 
\end{proof}

It is clear that by taking $z=0$ in \eqref{eqn:elptjump} the following  product formula
is obtained.
\begin{corollary}
Let $B=B(b_1,\dots, b_n)$ be a $\J$-attacking board. Then we have 
\begin{equation*}
 r_{n}^{\J}(a,b;q,p;B)=
\prod_{i=1}^n [b_i -\J(i-1)]_{aq^{2(\J (i-1) -b_i )},bq^{ \J (i-1) -b_i };q,p}.
\end{equation*}
\end{corollary}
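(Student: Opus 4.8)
The plan is to obtain this corollary as the $z=0$ specialization of the product formula \eqref{eqn:elptjump}, in exact parallel with the deduction of Corollary~\ref{nfac} from Theorem~\ref{thm:elptprod}. First I would observe that, although \eqref{eqn:elptjump} was proved only for integers $z\ge\J n$, both of its sides are meromorphic functions of $q^z$ (for the nome $p$ held fixed), so that agreement at infinitely many points forces the identity to hold for all $z$; in particular the substitution $z=0$ is legitimate.

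Upon setting $z=0$, the left-hand side of \eqref{eqn:elptjump} becomes precisely $\prod_{i=1}^n[b_i-\J(i-1)]_{aq^{2(\J(i-1)-b_i)},bq^{\J(i-1)-b_i};q,p}$, which is the claimed value of $r_n^{\J}(a,b;q,p;B)$; it therefore only remains to check that the right-hand side collapses to its $k=0$ term. The key point is that in $\prod_{j=1}^k[-\J(j-1)]_{aq^{2\J(j-1)},bq^{\J(j-1)};q,p}$ the factor indexed by $j=1$ is $[0]_{a,b;q,p}$, and from the defining formula \eqref{elln} one has $[0]_{a,b;q,p}=\theta(1,a,bq^2,a/b;p)/\theta(q,aq,bq,aq^{-1}/b;p)=0$, since $\theta(1;p)=0$ (the $j=0$ factor of the product defining $\theta(1;p)$ vanishes). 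Consequently $\prod_{j=1}^k[-\J(j-1)]_{aq^{2\J(j-1)},bq^{\J(j-1)};q,p}=\delta_{k,0}$, every summand with $k\ge1$ disappears, and only $r_n^{\J}(a,b;q,p;B)$ survives.

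I do not expect a genuine obstacle: the result is an immediate consequence of the preceding theorem, and the only step deserving care is the evaluation of \eqref{eqn:elptjump} at $z=0$, which is handled by the same analytic-continuation argument already used to prove Theorem~\ref{thm:elptprod} and the $\J$-attacking product formula themselves. Once that is granted, the vanishing $[0]_{a,b;q,p}=0$ does all the remaining work.
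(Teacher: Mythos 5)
Your proposal is correct and matches the paper's own argument: the paper deduces the corollary simply by setting $z=0$ in \eqref{eqn:elptjump}, exactly as you do, with the terms $k\ge 1$ vanishing because the $j=1$ factor is $[0]_{a,b;q,p}=0$ (the same mechanism as in Corollary~\ref{nfac}). Your extra remark on analytic continuation is harmless and consistent with how the paper itself justifies evaluating its product formulas at general $z$.
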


\subsection{Elliptic analogue of generalized Stirling numbers
of the second kind}\label{subsec:pqstir}

Here we consider the generalized $(\mathfrak p,q)$-Stirling numbers of the
second kind $\tilde{S}_{n,k}^{\I,\J}(\mathfrak p, q)$ 
(here $\I$ is an additional nonnegative integer parameter) which were thoroughly
investigated  by Remmel and Wachs~\cite{RW}. They are defined by 
\begin{equation}
\tilde{S}_{n+1,k}^{\I,\J}(\mathfrak p, q)=
q^{\I+(k-1)\J}\tilde{S}_{n, k-1}^{\I, \J}(\mathfrak p, q)+
\mathfrak p^{-(n+1)\J}[k\J+\I]_{\mathfrak p, q} \tilde{S}_{n,k}^{\I,\J}(\mathfrak p, q),
\end{equation}
with $\tilde{S}_{0,0}^{\I, \J}(\mathfrak p, q)=1$ and $\tilde{S}_{n,k}^{\I, \J}(\mathfrak p, q)=0$
if $k<0$ or $k>n$. They also satisfy 
$$
[z+\I]_{\mathfrak p,q} ^n =\sum_{k=0}^n \tilde{S}_{n,k}^{\I, \J}(\mathfrak p, q)
\mathfrak p^{z(n-k)+\binom{n-k+1}{2}}[z]_{\mathfrak p,q}\!\downarrow_{k,\J}.
$$
While for $\I=0$ and $\J=1$
they were introduced by Wachs and White \cite{WaW},
the rescaled variant $S_{n,k}^{\I, \J}(\mathfrak p, q)=
\mathfrak p^{\binom{n-k+1}{2}\J-(n-k)(\I-1)}
q^{-k\I-\binom{k}{2}\J}\tilde{S}_{n,k}^{\I, \J}(\mathfrak p, q)$
was defined by de~M\'{e}dicis and Leroux~\cite{ML} by using $0$-$1$ tableaux. 

From now on, we set $\mathfrak p=1$ and use the notation
$\tilde{S}_{n,k}^{\I, \J}(q)=\tilde{S}_{n,k}^{\I, \J}(1,q)$ and
$S_{n,k}^{\I, \J}(q)=S_{n,k}^{\I, \J}(1,q)$ for
$\tilde{S}_{n,k}^{\I, \J}(q)=q^{k\I+\binom{k}{2}\J}S_{n,k}^{\I, \J}(q)$. 

Let $B_{\I, \J, n}=B(\I, \I+\J, \I+2\J, \dots, \I+(n-1)\J)$. 
In \cite{RW}, Remmel and Wachs showed that 
$$\tilde{S}_{n,k}^{\I, \J}(q)= r_{n-k}^{\J}(q;B_{\I, \J, n}).$$
If we let $\alpha_B (P)$ denote the number of uncancelled cells of $B$
which lie above a rook in $P$, then Remmel and Wachs also showed that 
$$S_{n,k}^{\I, \J}(q)=\sum_{P\in\mathcal{N}_{n-k}^\J (B_{\I, \J, n})}q^{\alpha_B (P)}.$$

We use the board $B=B_{\I, \J, n}$ in \eqref{eqn:elptjump} to define
an elliptic analogue of $\tilde{S}_{n,k}^{\I, \J}(q)$. For this board,
the product formula becomes  
\begin{equation}
([z+\I]_{aq^{-2\I}, bq^{-\I};q,p})^n =
\sum_{k=0}^n r_{n-k}^{\J}(a,b;q,p;B_{\I, \J, n})
\prod_{j=1}^k [z-\J (j-1)]_{aq^{2\J (j-1)},bq^{\J( j -1)};q,p}.
\end{equation}
If we define $\tilde{S}_{n,k}^{\I, \J}(a,b;q,p):=
r_{n-k}^{\J}(a,b;q,p;B_{\I, \J, n})$, then up to whether there is a rook
or not in the last column of $B_{\I, \J, n}$, we get the following recursion 
\begin{align}\label{eqn:genstirrec}
\tilde{S}_{n+1,k}^{\I, \J}(a,b;q,p)={}\notag&
W_{aq^{-2\I},bq^{-\I};q,p}(\I+(k-1)\J)\tilde{S}_{n, k-1}^{\I, \J}(a,b;q,p)\\
&+[\I +k\J]_{aq^{-2\I},bq^{-\I};q,p} \tilde{S}_{n,k}^{\I, \J}(a,b;q,p).
\end{align}
With the initial conditions $\tilde{S}_{0,0}^{\I, \J}(a,b;q,p)=1$ and 
$\tilde{S}_{n,k}^{\I, \J}(a,b;q,p)=0$ for $k<0$ or $k>n$,
\eqref{eqn:genstirrec} can be used to characterize $\tilde{S}_{n,k}^{\I, \J}(a,b;q,p)$.
In \cite{RW}, Remmel and Wachs developed a combinatorial interpretation for
$S_{n,k}^{\I, \J}(q)$ in terms of permutation statistics, colored partitions
and restricted growth functions. We can modify their $q$-weight
function to give a combinatorial interpretation for
$S_{n,k}^{\I,\J}(a,b;q,p)$ where 
$$\tilde{S}_{n,k}^{\I, \J}(a,b;q,p)=
\left(\prod_{j=1}^k W_{aq^{-2\I },bq^{-\I};q,p}(\I+(j-1)\J)\right)
S_{n,k}^{\I,\J}(a,b;q,p).$$
We shall assume that $0\le \I\le \J$. Let $\mathcal{CP}$ be the
collection of all set partitions of $\{ 0, 1, \dots, n\}$ whose
nonzero elements are colored with colors in the set
$\{ 0, 1,\dots, j-1\}$. We refer to the block of a colored partition
that contains $0$ as the \emph{zero-block}.
Define $\mathcal{CP}_{n,k}^{\I, \J}$ to be the subset of
$\mathcal{CP}$ consisting of partitions with $k+1$ blocks where the
elements are colored so that 
\begin{itemize}
\item[(a)] the nonzero elements of the zero-block have colors in
$\{ 0, \dots, i-1\}$, and 
\item[(b)] the smallest element of each block other than the
zero-block has color $0$. 
\end{itemize}
Note that there is a natural way to encode the set partitions of
$[n]$ as restricted growth functions. A \emph{restricted growth function}
is a word $w_1 \cdots w_n$ over the alphabet $[n]$ such that $w_1=1$ and
for $s=2,\dots, n$, we have $w_s \le 1+max\{ w_1,\dots, w_{s-1}\}$.
To a partition $\pi =\langle \pi_1,\dots, \pi_k\rangle$, where
$min(\pi_1)<\cdots <min(\pi_k)$, we associate the restricted
growth function $w_1 w_2\cdots w_n$, where $w_s= t$ if $s\in \pi_t$. 

Now we generalize this encoding to colored partitions.
Let $\pi=\langle \pi_0 ,\dots, \pi_k\rangle\in \mathcal{CP}_{n,k}^{\I,\J}$
where $min(\pi_0)<\cdots < min(\pi_k)$ and let $w(\pi)=w_0 w_1\cdots w_n$
where for all $0\le s \le n$, $w_s =t $ if $s\in \pi_t$.
Then we color $w$ with the same color that $s$ was colored with in $\pi$.
For example, if
$\pi=\langle \{0, 1^1, 4^0\}, \{ 2^0, 5^1\}, \{3^0, 6^2\}
\rangle \in \mathcal{CP}_{6,2}^{2,3}$ (here the exponents of the elements
are the respective colors), then
$w(\pi)=0 0^1 1^0 2^0 0^0 1^12^2$. We let
$\mathcal{RG}_{n,k}^{\I, \J}=\{ w(\pi)=
w_0 w_1 ^{e_1}\cdots w_n ^{e_n} \mid \pi\in \mathcal{CP}_{n,k}^{\I,\J}\}$.
We also express the colored word
$w=w_0 w_1 ^{e_1}\cdots w_n ^{e_n} \in \mathcal{RG}_{n,k}^{\I, \J}$ as a
pair of words $(w_0 w_1 \cdots w_n : e_1 \cdots e_n)$. Remmel and
Wachs~\cite[Theorem 18]{RW} showed that
$S_{n,k}^{\I, \J}(1)=|\mathcal{RG}_{n,k}^{\I, \J}|=|\mathcal{CP}_{n,k}^{\I, \J}|$
by constructing a bijection 
$\phi:\mathcal{RG}_{n,k}^{\I,\J}\rightarrow \mathcal{N}_{n-k}^{\J}(B_{\I,\J,n})$
as follows.
Let $(w:e)\in \mathcal{RG}_{n,k}^{\I, \J}$. We place rooks from left to right
column by column so that in column $s$, a rook is placed in the
$(\I +w_s \J -e_s)$-th available cell (that is not $\J$-attacked)
from the bottom. If no such cell is available then we leave the column
$s$ empty. For example, $(w:e)=(0012012 : 100012)$ corresponds to the
rook placement in Figure \ref{fig:we}. In Figure \ref{fig:we}, the
$\J$-attacked cells are denoted by $\bullet$'s and the uncancelled
cells above rooks are denoted by $\circ$'s.
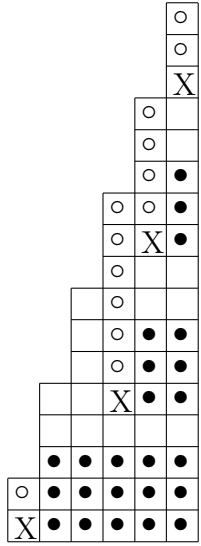
\begin{figure}[ht]
\begin{picture}(60,175)(0,0)
\multiput(0,10)(0,10){3}{\line(1,0){60}}
\multiput(10,40)(0,10){3}{\line(1,0){50}}
\multiput(20,70)(0,10){3}{\line(1,0){40}}
\multiput(30,100)(0,10){3}{\line(1,0){30}}
\multiput(40,130)(0,10){3}{\line(1,0){20}}
\multiput(50,160)(0,10){3}{\line(1,0){10}}
\multiput(0,10)(10,0){1}{\line(0,1){20}}
\multiput(10,10)(10,0){1}{\line(0,1){50}}
\multiput(20,10)(10,0){1}{\line(0,1){80}}
\multiput(30,10)(10,0){1}{\line(0,1){110}}
\multiput(40,10)(10,0){1}{\line(0,1){140}}
\multiput(50,10)(10,0){2}{\line(0,1){170}}
\put(2,11){X}
\put(32, 51){X}
\put(42, 101){X}
\put(52, 151){X}
\put(12, 13){$\bullet$}
\put(22, 13){$\bullet$}
\put(32, 13){$\bullet$}
\put(42, 13){$\bullet$}
\put(52, 13){$\bullet$}
\put(12, 23){$\bullet$}
\put(22, 23){$\bullet$}
\put(32, 23){$\bullet$}
\put(42, 23){$\bullet$}
\put(52, 23){$\bullet$}
\put(12, 33){$\bullet$}
\put(22, 33){$\bullet$}
\put(32, 33){$\bullet$}
\put(42, 33){$\bullet$}
\put(52, 33){$\bullet$}
\put(42, 53){$\bullet$}
\put(52, 53){$\bullet$}
\put(42, 63){$\bullet$}
\put(52, 63){$\bullet$}
\put(42, 73){$\bullet$}
\put(52, 73){$\bullet$}
\put(52, 103){$\bullet$}
\put(52, 113){$\bullet$}
\put(52, 123){$\bullet$}
\put(2, 23){$\circ$}
\put(32, 63){$\circ$}
\put(32, 73){$\circ$}
\put(32, 83){$\circ$}
\put(32, 93){$\circ$}
\put(32, 103){$\circ$}
\put(32, 113){$\circ$}
\put(42, 113){$\circ$}
\put(42, 123){$\circ$}
\put(42, 133){$\circ$}
\put(42, 143){$\circ$}
\put(52, 163){$\circ$}
\put(52, 173){$\circ$}
\end{picture}
\caption{The rook placement corresponding to
$(0012012 : 100012)\in \mathcal{RG}_{6,2}^{2,3}$.}\label{fig:we}
\end{figure}
We then define a $q$-statistic for $S_{q}^{\I, \J}(n,k)$.
For a given $\gamma =(w : e)\in \mathcal{RG}_{n,k}^{\I, \J}$,
let $m_s (\gamma)=max \{ w_1,\dots, w_{s-1}\}$, for each $s=1,\dots, n$,
and define 
\begin{align*}
\mathcal{MAX}&= \{s\in [n] : w_s > max \{w_0,\dots, w_{s-1}\}\},\\
inv(w:e)&=\J\sum_{1\le s < t\le n}\chi(w_s > w_t \text{ and } s\in
\mathcal{MAX}(w:e) )+\sum_{s=1}^n e_s.
\end{align*}
Then for 
$$D_{n,k}^{\I, \J}(q)=\sum_{\gamma\in\mathcal{RG}_{n,k}^{\I, \J} }q^{inv(\gamma)},$$
Remmel and Wachs showed in \cite[Theorem 19]{RW} that 
$D_{n,k}^{\I, \J}(q)=S_{n,k}^{\I, \J}(q)$. 

Now for $\gamma=(w:e)=(w_0 w_1\cdots w_n : e_1\cdots e_n)\in
\mathcal{RG}_{n,k}^{\I, \J}$, we define 
$$D_{n,k}^{\I, \J}(a,b;q,p)=\sum_{\gamma\in\mathcal{RG}_{n,k}^{\I, \J} }
\prod_{s=1}^n W_{aq^{-2\I},bq^{-\I};q,p}(\J |\{t<s : w_t > w_s
\text{ and } t\in \mathcal{MAX}(\gamma)\} |+e_s).$$
\begin{proposition}
For each $\gamma \in  \mathcal{RG}_{n,k}^{\I, \J}$, we have
$$wt^\J (\phi(\gamma))=\prod_{s=1}^n
W_{aq^{-2\I},bq^{-\I};q,p}(\J |\{t<s : w_t > w_s \text{ and }
t\in \mathcal{MAX}(\gamma)\} |+e_s),$$
and hence, 
$$D_{n,k}^{\I, \J}(a,b;q,p)=S_{n,k}^{\I, \J}(a,b;q,p).$$
\end{proposition}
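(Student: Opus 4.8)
The plan is to prove the first (per-$\gamma$) identity by a column-by-column analysis of the rook placement $\phi(\gamma)$, mirroring the statistic computation of Remmel and Wachs in the $q$-case but now carried out at the level of the elliptic weights, and then to deduce the equality $D_{n,k}^{\I,\J}(a,b;q,p)=S_{n,k}^{\I,\J}(a,b;q,p)$ by summing over $\gamma$. First I would fix $\gamma=(w:e)$ and recall from the construction of $\phi$ that in column $s$ a rook is placed in the $(\I+w_s\J-e_s)$-th available (non-$\J$-attacked) cell from the bottom, and that column $s$ is empty precisely when $s\in\mathcal{MAX}(\gamma)$; in that case all $\I+(w_s-1)\J$ available cells of the column survive uncancelled. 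For a column $s$ carrying a rook, the uncancelled cells of $B_{\I,\J,n}-\phi(\gamma)$ lying in it are exactly the non-$\J$-attacked cells strictly above the rook, and the Remmel--Wachs analysis identifies their number as $\J\,|\{t<s:w_t>w_s,\ t\in\mathcal{MAX}(\gamma)\}|+e_s$.

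The crux is that the small-weight arguments $\J(i-1)+1-j-\J r_{(i,j)}(\phi(\gamma))$ attached to those surviving cells run through the consecutive integers $1,2,\dots$ as the cell sweeps upward through the column: the $-j$ term drops by one per row, the gaps produced in the column by the $\J$-attacks of rooks to the left of column $s$ are compensated exactly by the matching jumps of the $-\J r_{(i,j)}(\phi(\gamma))$ term, and the base is shifted to $(aq^{-2\I},bq^{-\I})$ because the leftmost column of $B_{\I,\J,n}$ has height $\I$ (the same shift already occurring in the elliptic analogue of Theorem~\ref{thm:RW}). Invoking \eqref{wshift} together with $W_{a,b;q,p}(m)=\prod_{j=1}^{m}w_{a,b;q,p}(j)$, the weights of the uncancelled cells in each column telescope into a single big weight: column $s$ contributes $W_{aq^{-2\I},bq^{-\I};q,p}(\J\,|\{t<s:w_t>w_s,\ t\in\mathcal{MAX}(\gamma)\}|+e_s)$ if it carries a rook, and $W_{aq^{-2\I},bq^{-\I};q,p}(\I+(w_s-1)\J)$ if it is empty. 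Since for $s\in\mathcal{MAX}(\gamma)$ one has $|\{t<s:w_t>w_s\}|=0$, the term indexed by an empty column is $W_{aq^{-2\I},bq^{-\I};q,p}(0)=1$, so the rook-column contributions multiply to $\prod_{s=1}^n W_{aq^{-2\I},bq^{-\I};q,p}(\J\,|\{t<s:w_t>w_s,\ t\in\mathcal{MAX}(\gamma)\}|+e_s)$, while the empty-column contributions assemble into $\prod_{j=1}^k W_{aq^{-2\I},bq^{-\I};q,p}(\I+(j-1)\J)$, which is precisely the constant factor relating $r_{n-k}^\J(a,b;q,p;B_{\I,\J,n})=\tilde S_{n,k}^{\I,\J}(a,b;q,p)$ to $S_{n,k}^{\I,\J}(a,b;q,p)$. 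This establishes the first assertion; summing over $\gamma\in\mathcal{RG}_{n,k}^{\I,\J}$ and using that $\phi$ is a bijection onto $\mathcal N_{n-k}^\J(B_{\I,\J,n})$ then yields $D_{n,k}^{\I,\J}(a,b;q,p)=S_{n,k}^{\I,\J}(a,b;q,p)$.

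The main obstacle is the per-column cancellation bookkeeping in the telescoping step: one must verify exactly which cells of a given column survive \emph{both} the $\J$-attacks coming from rooks to the left and the ordinary cancellation below a rook, and that the combined effect of the summands $-j$ and $-\J r_{(i,j)}(\phi(\gamma))$ in the weight argument compresses the resulting, possibly gapped, stack of surviving cells into the unbroken string $1,2,\dots$ required by $W_{a,b;q,p}(m)=\prod_{j}w_{a,b;q,p}(j)$. This is the elliptic refinement of precisely the statistic-preservation property that Remmel and Wachs checked in proving $D_{n,k}^{\I,\J}(q)=S_{n,k}^{\I,\J}(q)$, so the combinatorial skeleton is already available; what has to be confirmed is that it survives passage from $q$-powers to the multiplicative elliptic weights, which it does because the weight of a cell depends on its position only through the single integer $\J(i-1)+1-j-\J r_{(i,j)}(P)$. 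I do not expect any new difficulty from the base shifts, since — as in the proof of the elliptic $\J$-attacking product formula — they are dictated solely by the column heights of $B_{\I,\J,n}$.
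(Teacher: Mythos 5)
Your treatment of the rook-bearing columns is exactly the paper's argument: count the uncancelled cells above the rook in column $s$ as $\J\,|\{t<s:w_t>w_s,\ t\in\mathcal{MAX}(\gamma)\}|+e_s$, note that their weight arguments form the unbroken string $1-\I,2-\I,\dots$ because the $-\J r_{(i,j)}$ term compensates the gaps produced by $\J$-attacks from the left, and telescope via \eqref{wshift} into a single big weight with shifted base $(aq^{-2\I},bq^{-\I})$. Up to that point you and the paper coincide.

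The delicate point is the rookless columns, and there your own bookkeeping contradicts the conclusion you draw from it. With $wt^\J$ as defined in Section~\ref{sec:j} --- a product over \emph{all} uncancelled cells of $B_{\I,\J,n}-P$ --- the $k$ empty columns of $\phi(\gamma)$ contribute nontrivially: as you compute, the column in which $w$ attains its $j$-th new maximum contributes $W_{aq^{-2\I},bq^{-\I};q,p}(\I+(j-1)\J)$, which is not $1$. So what your argument actually proves is
\begin{equation*}
wt^\J(\phi(\gamma))=\Bigl(\prod_{j=1}^k W_{aq^{-2\I},bq^{-\I};q,p}(\I+(j-1)\J)\Bigr)
\prod_{s=1}^n W_{aq^{-2\I},bq^{-\I};q,p}\bigl(\J\,|\{t<s:w_t>w_s,\ t\in\mathcal{MAX}(\gamma)\}|+e_s\bigr),
\end{equation*}
and the sentence ``this establishes the first assertion'' is not right as written, since the first display of the Proposition carries no such prefactor. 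That display is correct only if $wt^\J(\phi(\gamma))$ there is read as the product over uncancelled cells lying \emph{above a rook} (the elliptic analogue of the statistic $\alpha_B$), which is what the paper's own proof tacitly does --- it never mentions the empty columns. Your extra factor is not wasted, though: under the full-weight reading it is precisely what the second assertion needs, since summing over $\gamma$ gives $\tilde{S}_{n,k}^{\I,\J}(a,b;q,p)=\bigl(\prod_{j=1}^k W_{aq^{-2\I},bq^{-\I};q,p}(\I+(j-1)\J)\bigr)\,D_{n,k}^{\I,\J}(a,b;q,p)$, and $S_{n,k}^{\I,\J}(a,b;q,p)$ is by definition $\tilde{S}_{n,k}^{\I,\J}(a,b;q,p)$ divided by exactly this product; this actually makes the step $D_{n,k}^{\I,\J}(a,b;q,p)=S_{n,k}^{\I,\J}(a,b;q,p)$ more transparent than in the paper. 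So repair the logic either by restricting the weight in the first identity to cells above rooks and then using your empty-column computation to justify that the remaining cells account for the normalizing factor, or by restating the first identity with the prefactor; as it stands you assert an identity that your own calculation shows to be off by that factor. (Also note that the empty-column factor in the displayed product equals $W(e_s)$, so you need $e_s=0$ for $s\in\mathcal{MAX}(\gamma)$, i.e.\ condition (b) on block minima, not merely the emptiness of $\{t<s:w_t>w_s\}$.)
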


\begin{proof}
Let $\gamma =(w:e)\in  \mathcal{RG}_{n,k}^{\I, \J}$.
Observe that for each $s=1,\dots, n$, column $s$ of $\phi(\gamma)$
has $\I +\J m_s (\gamma)$ cells that are not $\J$-attacked by any
rooks on the left, since there would be $(s-1-m_s(\gamma))$ many
rooks in the first $(s-1)$-columns, where
$m_s(\gamma)=max\{ w_0, w_1, \dots, w_{s-1}\}$. This implies that the
number of uncancelled cells above a rook in column $s$ is 
\begin{align*}
\I +\J m_s (\gamma) -(\I +\J w_s -e_s) &= \J (m_s (\gamma)-w_s)+e_s\\
&=\J |\{t<s : w_t > w_s \text{ and } t\in \mathcal{MAX}(\gamma)\} | +e_s.
\end{align*}
The weight of the top-most uncancelled cell is $w_{a,b;q,p}(1-\I)$,
and so the product of the weights of uncancelled cells in the column $s$ is 
\begin{align*}
&w_{a,b;q,p}(1-\I)w_{a,b;q,p}(2-\I)\cdots
w_{a,b;q,p}(\J |\{t<s : w_t > w_s \text{ and }
t\in \mathcal{MAX}(\gamma)\} | +e_s-\I)\\
={}& w_{aq^{-2\I },bq^{-\I};q,p}(1)\cdots
w_{aq^{-2\I },bq^{-\I };q,p}(\J |\{t<s : w_t > w_s \text{ and }
t\in \mathcal{MAX}(\gamma)\} | +e_s)\\
={}& W_{aq^{-2\I},bq^{-\I};q,p}(\J |\{t<s : w_t > w_s \text{ and }
t\in \mathcal{MAX}(\gamma)\} |+e_s).
\end{align*}
Thus $wt^\J (\phi(\gamma))$ is obtained by multiplying the above weights
over all $s=1,\dots, n$.
\end{proof}


\section{Elliptic file numbers}\label{sec:file}


In this section, we consider an elliptic analogue of the file numbers.
The file numbers and their $q$-analogue were first considered by
Garsia and Remmel in 1984 upon the introduction of
$q$-rook numbers in \cite{GR}
(but did not get into the final version of their paper).
The first time they actually appeared in literature under the name
``file numbers'' was in \cite{RW} where already $\mathfrak p,q$-extensions
were investigated.
Other instances where they appear include \cite{BCHR}
and \cite{McR}.

Given a board $B\subset [n]\times \mathbb N$, let  $\mathcal{F}_k (B)$
be the set of placements $Q$ of $k$ rooks in $B$ such that no two rooks
in $Q$ lie in the same column. 
We refer to such a $Q$ as a
\emph{file placement} of $k$ rooks in $B$. Thus in a file placement
$Q$, we do allow the possibility that two rooks lie in the same row.
Given a placement $Q\in \mathcal{F}_k (B)$, we let each rook in
$Q$ cancel all the cells below it in $B$. Let $u_B(Q)$ be the number
of cells in $B-Q$ which are not cancelled by any rook in $Q$.
Then the $q$-file numbers are defined by 
\begin{equation}
f_k(q;B)=\sum_{Q\in\mathcal{F}_k (B)}q^{u_B (Q)}.
\end{equation}
Garsia and Remmel proved the following product formula
involving the $q$-file numbers.

\begin{theorem}
For any skyline board $B=B(c_1, \dots, c_n)$, 
\begin{equation}\label{eqn:file}
\prod_{i=1}^n [z+c_i]_q = \sum_{k=0}^n f_{n-k}(q;B)([z]_q)^k.
\end{equation}
\end{theorem}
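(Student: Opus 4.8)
The plan is to mimic the classical rook-theoretic proof of a factorization theorem: fix a nonnegative integer $z$, build an enlarged board, and count a weighted sum over placements in two ways; the polynomial identity then follows by analytic continuation (both sides of \eqref{eqn:file} are polynomials in $[z]_q$ of degree $n$, or equivalently rational in $q^z$, so agreement at infinitely many integers $z$ forces agreement identically). Concretely, given the skyline board $B=B(c_1,\dots,c_n)$, let $B_z$ denote the board obtained by appending a $[n]\times[z]$ block of $z$ rows below $B$ (with the new rows indexed $0,-1,\dots,-z+1$), and consider
\begin{equation*}
\sum_{Q\in\mathcal{F}_n(B_z)}q^{u_{B_z}(Q)},
\end{equation*}
the weighted sum over all file placements of $n$ rooks in $B_z$ (i.e.\ exactly one rook in each of the $n$ columns, with no row restriction and each rook cancelling the cells below it in its column).

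First I would evaluate this sum column by column. Because a file placement imposes \emph{no interaction between columns} (rooks only cancel within their own column, and there is no non-attacking condition on rows), the sum over $\mathcal{F}_n(B_z)$ factors completely as a product over columns. In column $i$ there are $c_i+z$ cells; placing the single rook of that column in the cell at height $h$ from the top leaves exactly $h-1$ uncancelled cells above it, contributing $q^{h-1}$, so the column-$i$ contribution is $\sum_{h=1}^{c_i+z}q^{h-1}=[c_i+z]_q$. Multiplying over $i=1,\dots,n$ yields $\prod_{i=1}^n[z+c_i]_q$, the left-hand side of \eqref{eqn:file}.

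Second I would evaluate the same sum by splitting each placement according to how many of its $n$ rooks lie in $B$ versus in the appended $[n]\times[z]$ block. Say $n-k$ rooks lie in $B$ and $k$ rooks lie in the lower block; since a file placement has one rook per column, the $k$ ``lower'' columns are an arbitrary choice and the two parts are independent. The $n-k$ rooks in $B$, together with their uncancelled cells lying in $B$, contribute $f_{n-k}(q;B)$ by definition; and a rook placed in one of the $z$ appended rows of a given column cancels everything below it \emph{within that column}, so — crucially, every cell of $B$ lying above that column's rook in the appended block has already been ``used up'' or is not in $B$ — each of the $k$ lower columns independently contributes $\sum_{h=1}^{z}q^{h-1}=[z]_q$ (the uncancelled cells being those of the appended block strictly above the rook). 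Hence the total is $\sum_{k=0}^n f_{n-k}(q;B)\,([z]_q)^k$, the right-hand side of \eqref{eqn:file}. Equating the two evaluations finishes the proof.

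The only subtle point — and the step I would write most carefully — is the bookkeeping of uncancelled cells in the second evaluation: one must check that when a column's rook sits in the appended block, the cells of $B$ in that column contribute nothing extra to $u_{B_z}(Q)$ in a way that would spoil the clean product $([z]_q)^k$. This works because in the decomposition we are counting uncancelled cells of the \emph{whole} board $B_z$, and a rook in row $-r$ of column $i$ cancels all cells below it in column $i$ but the cells of $B$ above it in column $i$ are exactly accounted for by the $f_{n-k}(q;B)$ factor only if that column has \emph{no} rook in $B$ — but it doesn't, precisely because file placements allow at most (here exactly) one rook per column. So the columns with a rook in the block are disjoint from the columns contributing to $f_{n-k}(q;B)$, and the factorization is clean. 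Everything else is the routine geometric-series identity $\sum_{h=1}^{m}q^{h-1}=[m]_q$.
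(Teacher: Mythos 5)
Your proof is correct and follows essentially the same route as the paper: the paper sketches exactly this argument (computing $\sum_{Q\in\mathcal{F}_n(B_z)}q^{u_{B_z}(Q)}$ on the extended board in two ways, column by column versus splitting at the ground) and defers the details to its elliptic analogue, Theorem~\ref{thm:elptfile}, whose proof is the weighted version of your two evaluations. Your careful bookkeeping of the $B$-cells sitting above a below-ground rook (they are absorbed into the $f_{n-k}(q;B)$ factor because that column is rook-free in $B$) is precisely the point the paper leaves implicit.
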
 
This product formula can be proved by computing the sum
$$\sum_{Q \in \mathcal{F}_n (B_z)}q^{u_{B_z}(Q)},$$
where $B_z$ again denotes the extended board obtained by
attaching the board $[n]\times[z]$ below $B$,
in two different ways. We omit the details since we will prove
the elliptic extension of this result in Theorem~\ref{thm:elptfile}.

By distinguishing the cases whether there is a rook or not 
in the last column, one can also obtain the following recursion. 
\begin{proposition}\label{thm:recurfile}
Let $B$ be a skyline board and let $B\cup m$
denote the board obtained by adding a column of length $m$ to $B$.
Then for any nonnegative integer $k$ we have 
\begin{equation}
f_k(q;B\cup m)=q^{m}f_k(q;B)+[m]_q\,f_{k-1}(q;B).
\end{equation}
\end{proposition}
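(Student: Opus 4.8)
The plan is to mimic the standard argument for the recursion of $q$-rook numbers (Proposition~\ref{thm:recur}); it is in fact simpler here, because file placements impose no row condition and the cancellation is purely columnwise. I would partition the set $\mathcal{F}_k(B\cup m)$ of file placements of $k$ rooks on $B\cup m$ according to whether the last (newly appended) column of length $m$ contains a rook or not, and show that the two resulting subsums produce the two terms on the right-hand side.

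Suppose first that $Q\in\mathcal{F}_k(B\cup m)$ has no rook in the last column. Then $Q$ is exactly a file placement of $k$ rooks on $B$, and conversely every element of $\mathcal{F}_k(B)$ arises this way. Since cancellation only propagates downward within a column, no rook of $Q$ (all lying in $B$) cancels any cell of the appended column, so all $m$ of its cells are uncancelled; hence $u_{B\cup m}(Q)=u_B(Q)+m$, and summing $q^{u_{B\cup m}(Q)}$ over these placements yields $q^m f_k(q;B)$. Next, suppose $Q$ has a rook in the last column, say in row $j$ with $1\le j\le m$. That rook cancels the $j-1$ cells below it and leaves the $m-j$ cells above it uncancelled, while affecting nothing in the other columns; removing it leaves a file placement $Q'\in\mathcal{F}_{k-1}(B)$ with $u_{B\cup m}(Q)=u_B(Q')+(m-j)$, and conversely any $Q'\in\mathcal{F}_{k-1}(B)$ together with a choice of row $j\in\{1,\dots,m\}$ for the extra rook gives such a $Q$. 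Therefore this subsum equals
\[
\sum_{Q'\in\mathcal{F}_{k-1}(B)}\;\sum_{j=1}^m q^{\,u_B(Q')+m-j}
=\Bigl(\sum_{j=1}^m q^{m-j}\Bigr)f_{k-1}(q;B)=[m]_q\,f_{k-1}(q;B).
\]
Adding the two contributions gives the claimed recursion.

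The argument is essentially bookkeeping; the only point needing a moment's care is the observation that in a file placement a rook cancels only within its own column, so appending a new column neither alters the cancellation pattern inside $B$ nor lets rooks of $B$ reach into the new column. This is exactly what decouples the enumeration into the two stated terms, and once it is made explicit there is no further obstacle.
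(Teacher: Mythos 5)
Your proposal is correct and follows exactly the argument the paper indicates: split $\mathcal{F}_k(B\cup m)$ according to whether the appended column contains a rook, with the rookless case giving the factor $q^m$ from the $m$ uncancelled cells and the other case giving $\sum_{j=1}^m q^{m-j}=[m]_q$ from the possible rook positions. The bookkeeping about purely columnwise cancellation is right, so nothing is missing.
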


We can define an elliptic analogue of the $q$-file numbers by
assuming the same rook cancellation as in the $q$-case where
elliptic weights are assigned to the uncancelled cells.

\begin{definition}
Given a skyline board $B=B(c_1,\dots, c_n)$, we define the elliptic
analogue of the $k$-th file number by
\begin{subequations}
\begin{equation}
f_k (a,b;q,p;B) =\sum_{Q\in \mathcal{F}_k(B)}wt_f (Q),
\end{equation}
with 
\begin{equation}
wt_f (Q)=\prod_{(i,j)\in U_B (Q)}w_{a,b;q,p}(1-j),
\end{equation}
\end{subequations}
where the elliptic weight $w_{a,b;q,p}(l)$ of an integer $l$ is
defined in \eqref{def:smallelpwt}.
\end{definition}
Note that in this case, the elliptic weight of a cell only depends
on its row coordinate. 

\begin{theorem}\label{thm:elptfile}
For any skyline board $B=B(c_1,\dots, c_n)$, we have 
\begin{equation}\label{eqn:elptfile}
\prod_{i=1}^n [z+c_i]_{aq^{-2c_i},bq^{-c_i};q,p}=
\sum_{k=0}^n f_{n-k}(a,b;q,p;B)([z]_{a, b;q,p})^k.
\end{equation}
\end{theorem}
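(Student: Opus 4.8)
The plan is to follow the same route as the proof of the product formula for elliptic rook numbers (Theorem~\ref{thm:elptprod}). As there, it suffices to establish \eqref{eqn:elptfile} for nonnegative integer values of $z$, the general case then following by analytic continuation in $z$. So I would fix such a $z$, let $B_z$ denote the skyline board obtained by attaching $[n]\times[z]$ below $B$ (the new rows indexed $0,-1,\dots,-z+1$, with $\mathfrak g$ the ground separating $B$ from the extension), and evaluate
$$\sum_{Q\in\mathcal F_n(B_z)}wt_f(Q)$$
in two different ways. The structural observation, special to file placements, is that every one of the $n$ columns of $B_z$ carries exactly one rook of $Q$, and that $wt_f(Q)$ factors over columns: if the rook of column $i$ sits in row $l_i\in\{-z+1,\dots,c_i\}$, its contribution is $\prod_{j=l_i+1}^{c_i}w_{a,b;q,p}(1-j)$, the product over the cells of column $i$ lying strictly above that rook (this also covers the ``empty column of $B$'' case $l_i\le 0$, where the whole of column $i$ inside $B$ is uncancelled).

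For the first evaluation I would sum column by column. Using the shift relation \eqref{wshift} to rewrite $\prod_{j=l_i+1}^{c_i}w_{a,b;q,p}(1-j)=W_{aq^{-2c_i},bq^{-c_i};q,p}(c_i-l_i)$, the sum over the $z+c_i$ admissible rows $l_i$ becomes $\sum_{s=0}^{c_i+z-1}W_{aq^{-2c_i},bq^{-c_i};q,p}(s)=[z+c_i]_{aq^{-2c_i},bq^{-c_i};q,p}$, the last step being the telescoping via \eqref{recelln} (using $[0]_{a,b;q,p}=0$ and $W_{a,b;q,p}(0)=1$) already exploited in the proof of Lemma~\ref{lem:prod}. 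The product over $i$ is the left-hand side of \eqref{eqn:elptfile}.

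For the second evaluation I would group the placements $Q\in\mathcal F_n(B_z)$ by the set $S\subseteq[n]$ of columns whose rook lies below $\mathfrak g$, and put $k=|S|$. A column $i\notin S$ contributes exactly the column-$i$ part of a file placement on $B$. For a column $i\in S$ the rook lies below all of $B$, so the column-$i$ weight factors as $\bigl(\prod_{j=1}^{c_i}w_{a,b;q,p}(1-j)\bigr)W_{a,b;q,p}(-l_i)$ with $l_i\in\{0,-1,\dots,-z+1\}$; summing over $l_i$ produces the extra factor $\sum_{s=0}^{z-1}W_{a,b;q,p}(s)=[z]_{a,b;q,p}$ by the same telescoping. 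Extracting one factor $[z]_{a,b;q,p}$ for each $i\in S$ and then summing over all $S$ of size $k$, what remains is precisely the weighted count $f_{n-k}(a,b;q,p;B)$ of file placements of $n-k$ rooks on $B$ (the sum over $S$ of empty-column weights times nonempty-column weights is, by definition, that count). Hence $\sum_{Q\in\mathcal F_n(B_z)}wt_f(Q)=\sum_{k=0}^n f_{n-k}(a,b;q,p;B)\,([z]_{a,b;q,p})^k$, the right-hand side of \eqref{eqn:elptfile}, and comparing the two evaluations finishes the proof.

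The computations are routine once the bookkeeping is fixed; the only point requiring care — and the step I expect to be the main (minor) obstacle — is the factorization at the ground in the second evaluation, namely recognizing that a rook placed below $\mathfrak g$ leaves its entire column inside $B$ uncancelled, so that the $B$-part of $wt_f$ of a placement in $\mathcal F_n(B_z)$ collapses cleanly to $f_{n-k}(a,b;q,p;B)$. Everything else is a direct elliptic transcription of Garsia and Remmel's $q$-file-number argument.
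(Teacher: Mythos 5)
Your proposal is correct and follows essentially the same route as the paper's proof: fix a nonnegative integer $z$, attach $[n]\times[z]$ below $B$, and evaluate $\sum_{Q\in\mathcal F_n(B_z)}wt_f(Q)$ in two ways, column by column for the left-hand side and by separating the placements above and below the ground for the right-hand side. Your version merely spells out the telescoping and the factorization at the ground in more detail than the paper does.
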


\begin{proof}
As before, it suffices to prove the theorem for nonnegative integer values of $z$.
We fix an integer $z$ and consider the extended board $B_z$ by attaching an $[n]\times [z]$ board
below the board $B$ and consider the $n$-file placements
$\mathcal{F}_n (B_z)$ in $B_z$. Then \eqref{eqn:elptfile} can be
proved by computing the sum 
\begin{equation}\label{eqn:elptfilez}
\sum_{Q\in \mathcal{F}_n (B_z)}wt_f (Q)
\end{equation}
in two ways. The left-hand side of \eqref{eqn:elptfile} computes
the above sum by placing rooks column by column. Since the elliptic weight
used to define $wt_f (Q)$ does not depend on the column coordinate of the
uncancelled cells, the weight sum in \eqref{eqn:elptfilez} is
the product of the weight sums coming from the possible placements in
each column, which is exactly the left-hand side of \eqref{eqn:elptfile}.
The right-hand side computes \eqref{eqn:elptfilez} by considering the
file placements in $B$ and in the extended part separately. 
\end{proof}

We have the following recursion for the elliptic file numbers,
which is an elliptic extension of Proposition~\ref{thm:recurfile}.
\begin{theorem}\label{thm:elptrecurfile}
Let $B$ be a skyline board, 
and $B\cup m$ denote the board obtained by adding a
column of height $m$ to $B$. Then, for any integer $k$, we have 
\begin{subequations}
\begin{align}
f_k (a,b;q,p;B)={}&0\qquad\text{for $k<0$,}\\
f_0 (a,b;q,p;B)={}&1\qquad\text{for
$B$ being the empty board},\\\intertext{and}
f_k (a,b;q,p;B\cup m)=
{}&W_{aq^{-2m},bq^{-m};q,p}(m)\,f_{k}(a, b;q,p;B)\notag\\\label{recrkf}
&+[m]_{aq^{-2m},bq^{-m};q,p}\,f_{k-1}(a, b;q,p;B).
\end{align}
\end{subequations}
\end{theorem}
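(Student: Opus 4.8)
The plan is to follow the template of the proof of Theorem~\ref{thm:elptrecur}: perform a weighted enumeration of the file placements of $k$ rooks on $B\cup m$ and split the sum according to whether the newly adjoined column (of height $m$) carries a rook or not. The two boundary assertions are immediate --- there are no file placements of a negative number of rooks, so $f_k(a,b;q,p;B)=0$ for $k<0$, and the unique file placement of $0$ rooks on the empty board has weight equal to the empty product $1$ --- so all the content lies in the recursion \eqref{recrkf}.

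First I would record the elementary weight computation that drives the argument. In a file placement the weight of a cell $(i,j)$ is $w_{a,b;q,p}(1-j)$, which does not depend on the column; applying the shift rule \eqref{wshift} with the base shifted by $-m$ gives $w_{a,b;q,p}(1-j)=w_{aq^{-2m},bq^{-m};q,p}(m+1-j)$ for every $j$. Consequently, for $0\le t\le m$,
\begin{equation*}
\prod_{j=t+1}^{m}w_{a,b;q,p}(1-j)=\prod_{l=1}^{m-t}w_{aq^{-2m},bq^{-m};q,p}(l)=W_{aq^{-2m},bq^{-m};q,p}(m-t),
\end{equation*}
using that $W_{a,b;q,p}(k)=\prod_{j=1}^{k}w_{a,b;q,p}(j)$ for $k\in\mathbb N$ (see \eqref{def:elpwt}). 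The $t=0$ instance produces the first coefficient in \eqref{recrkf}.

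The main argument then runs as follows. Adjoining a column changes neither the file-cancellation pattern inside $B$ nor the weight of any cell of $B$, so a file placement $Q$ on $B\cup m$ restricts to a file placement on $B$ with the same $B$-weight. If the last column is empty, the restriction is a file placement of $k$ rooks on $B$ (summing to $f_k(a,b;q,p;B)$) and the last column contributes the $m$ uncancelled cells in rows $1,\dots,m$, i.e.\ the factor $W_{aq^{-2m},bq^{-m};q,p}(m)$ by the $t=0$ case above. If the last column carries a rook, say in row $h$ with $1\le h\le m$, that rook cancels the $h-1$ cells below it and leaves the $m-h$ uncancelled cells in rows $h+1,\dots,m$, contributing $W_{aq^{-2m},bq^{-m};q,p}(m-h)$ by the display above, while the remaining $k-1$ rooks form a file placement on $B$ (summing to $f_{k-1}(a,b;q,p;B)$). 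Summing over $h$ and reindexing $s=m-h$,
\begin{equation*}
\sum_{h=1}^{m}W_{aq^{-2m},bq^{-m};q,p}(m-h)=\sum_{s=0}^{m-1}W_{aq^{-2m},bq^{-m};q,p}(s)=[m]_{aq^{-2m},bq^{-m};q,p},
\end{equation*}
where the last equality is the telescoping of \eqref{recelln} (together with $W_{a,b;q,p}(0)=1$). Adding the contributions of the two cases yields \eqref{recrkf}.

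I do not anticipate a genuine obstacle: the argument is bookkeeping parallel to the proof of Theorem~\ref{thm:elptrecur}, and the only two points needing care are getting the parameter shift in \eqref{wshift} right and checking that the range of the occupied row in the last column turns the summand into the partial product $\prod_{l=1}^{m-h}$, which is precisely what lets the sum telescope to the elliptic number $[m]_{aq^{-2m},bq^{-m};q,p}$. Everything else --- well-definedness, the empty-board case, and negative $k$ --- is formal.
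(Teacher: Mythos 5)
Your proof is correct and follows essentially the same route as the paper: a weighted enumeration of $k$-rook file placements on $B\cup m$, split according to whether the adjoined column is empty (giving the factor $W_{aq^{-2m},bq^{-m};q,p}(m)$ from its $m$ uncancelled cells) or occupied (the possible rook heights summing, by the telescoping of \eqref{recelln}, to $[m]_{aq^{-2m},bq^{-m};q,p}$). The only difference is cosmetic: the paper delegates the second coefficient to Lemma~\ref{lem:prod}, whereas you carry out the same shift-and-telescope computation explicitly, which is fine.
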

\begin{proof}
This recursion stems from a weighted enumeration of a file placement of
$k$ rooks on $B\cup m$. 
We distinguish the cases whether there is a rook
in the last column or not.
The first term on the right-hand side of \eqref{recrkf} is obtained
when there is no rook in the last column.
The 
weight multiplied in front of $f_k(a,b;q,p;B)$ comes from the uncancelled $m$ cells
in the last column. The second term on the right-hand side of
\eqref{recrkf} is obtained when there
is a rook in the last column. The coefficient in front of
$f_{k-1}$ is a consequence of Lemma~\ref{lem:prod}.
\end{proof}

\begin{remark}
It is analytically and combinatorially obvious that two skyline boards
$B_1$ and $B_2$ for which the left-hand sides of
\eqref{eqn:elptfile} are equal ($B_2$ then must consist of the
columns of $B_1$ which may be permuted) have the
same elliptic file numbers. In this case we may refer to
such boards $B_1$ and $B_2$ as \emph{file equivalent}.
\end{remark}

\subsection{Elliptic Stirling numbers of the first kind}

For the staircase board $\mathsf{St}_n=B(0, 1, \dots, n-1)$,
the product formula in Theorem~\ref{thm:elptfile} becomes 
\begin{equation}\label{eqn:elptstirling1}
\prod_{i=1}^n [z+i-1]_{aq^{2(1-i)},bq^{1-i};q,p}=
\sum_{k=0}^n f_{n-k}(a,b;q,p;B)([z]_{a, b;q,p})^k.
\end{equation}
The file numbers $f_{n-k}(a,b;q,p;\mathsf{St}_n)$ are in fact the unsigned
\emph{elliptic Stirling numbers of the first kind}
$\mathfrak{c}_{a,b;q,p}(n,k)$ which have recently been
defined and studied (in a different setting) by
Zs\'ofia Keresk\'{e}nyin\'{e} Balogh
and the first author~\cite{KBSchl}.
For a bijection of file placements of $n-k$ rooks in $\mathsf{St}_n$
and permutations of $[n]$ with $k$ cycles, see the subsequent subsection,
where we consider a refinement of the Stirling numbers of the first kind.

By using the $(a,b,y,z)\mapsto(aq^{-2n},bq^{-n},n,z+n)$
case of the elementary identity \eqref{recellny},
or by distinguishing whether there is a rook or not in the last column,
we obtain from \eqref{eqn:elptstirling1}
the following recurrence relation
\begin{equation}\label{eqn:elptstir1rec}
\mathfrak{c}_{a,b;q,p}(n+1,k)=
[n]_{aq^{-2n}, bq^{-n};q,p}\mathfrak{c}_{a, b;q,p}(n,k)+
W_{aq^{-2n},bq^{-n};q,p}(n)\mathfrak{c}_{a,b;q,p}(n,k-1).
\end{equation}
With the conditions $\mathfrak{c}_{a,b;q,p}(0,0)=1$ and
$\mathfrak{c}_{a,b;q,p}(n,k)=0$  for $k<0$ or $k>n$,
the recurrence relation \eqref{eqn:elptstir1rec}
uniquely determines $\mathfrak{c}_{a,b;q,p}(n,k)$.

\subsection{Elliptic $r$-restricted Stirling numbers of the first kind}

The \emph{$r$-restricted (signless) Stirling numbers of the first kind}
(these are usually called $r$-Stirling numbers of the first kind
but we adopt the terminology from
\cite[see the sequences A143491, A143492 and A143493]{Sl}
to avoid possible confusion with the $q$-Stirling numbers), which we
denote by ${\mathfrak c}^{(r)} (n,k)$, are defined, for all positive $r$,
by the number of permutations of the set $\{ 1,\dots, n\}$ having $k$
cycles, such that the numbers $1, 2,\dots, r$ are in distinct cycles.
For $r=1$ (or $r=0$) they reduce to the usual Stirling numbers
of the first kind.
They are treated with some detail in \cite{B}, where it
is shown that the $r$-restricted Stirling numbers
of the first kind have the following generating function 
\begin{equation}
\sum_{k=0}^n {\mathfrak c}^{(r)}(n,k)z^k =
\left\{ \begin{array}{ll} z^r(z+r)(z+r+1)\cdots (z+n-1),&
n\ge r\ge 0,\\ 0,& \text{otherwise.}\end{array}\right.
\end{equation}
As in subsection~\ref{subsec:Str2}, let $\mathsf{St}_n^{(r)}$ denote the board
$\mathsf{St}_n^{(r)}=B(c_1,\dots, c_n)$ such that $c_i=0$ for $i=1,\dots,r$ and 
$c_i=i-1$, for $i=r+1,\dots,n$.
Then for $B=\mathsf{St}_n^{(r)}$ and $q\to 1$ the
product formula \eqref{eqn:file} becomes
\begin{equation}
z^r(z+r)(z+r+1)\cdots (z+n-1)=\sum_{k=0}^n f_{n-k}(1;\mathsf{St}_n^{(r)})z^k,
\end{equation}
which is the generating function for the $r$-restricted Stirling numbers
of the first kind. Thus we can identify ${\mathfrak c}^{(r)}(n,k)$
with $f_{n-k}(1;\mathsf{St}_n^{(r)})$. We can construct a bijection between file
placements of $n-k$ rooks in $\mathsf{St}_n^{(r)}$ and permutations
of $n$ numbers
with $k$ cycles, such that the numbers $1,2,\dots,r$ are in
distinct cycles. We start from the right-most rook.
If this rook is placed in the cell $(\alpha, l)$, then place $\alpha$
to the left of $l$ in the cycle notation. Then move to the second rook
to the left. If this second rook is in the cell $(\beta, l)$,
then put $\beta$ to the left of $\alpha$ in the same cycle ending
with $l$, but if this rook is in $(\beta, k)$ for $k\neq l$, then construct a
new cycle with $\beta$ and $k$ and put $\beta$ to the left of $k$.
While iterating this procedure, if there is a rook in $(l, h)$,
then append $h$ to the right-most place of the cycle ending with $l$.
The numbers which
never occurred in the cycle construction after reading all the rooks
in the file placement are fix-points, i.e., cycles of one element.
For example, given
the file placement in Figure~\ref{fig:file}, the right-most rook
gives $(8~ 3)$ in the cycle notation, and the second right-most rook
gives $(7~4)$. The third right-most rook puts $6$ to the left of $7$
and gives $(6~7~4)$. The fourth rook gives $(5~2)$ and the last rook
puts $1$ to the right of the cycle ending with $4$ and gives $(6~7~4~1)$.
Thus the permutation of $1,\dots, 8$ corresponding to the given file
placement in Figure~\ref{fig:file} is $(6~7~4~1)(5~2)(8~3)$.

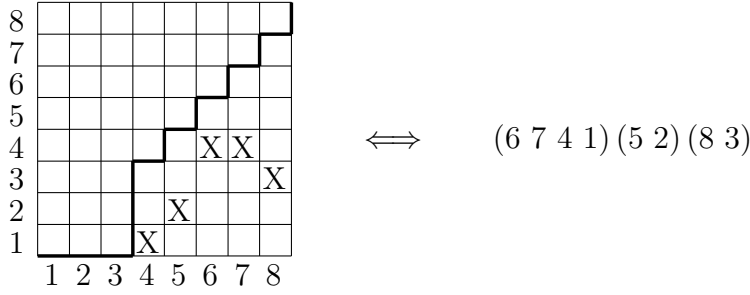
\begin{figure}[ht]
\begin{picture}(90,96)(0,0)
\multiput(10,10)(0,10){9}{\line(1,0){80}}
\multiput(10,10)(10,0){9}{\line(0,1){80}}
\thicklines\linethickness{1.3pt}
\multiput(10, 10)(10,0){1}{\line(1,0){30}}
\multiput(40, 10)(10,0){1}{\line(0,1){30}}
\multiput(40, 40)(10,0){1}{\line(1,0){10}}
\multiput(50, 40)(10,0){1}{\line(0,1){10}}
\multiput(50, 50)(10,0){1}{\line(1,0){10}}
\multiput(60, 50)(10,0){1}{\line(0,1){10}}
\multiput(60, 60)(10,0){1}{\line(1,0){10}}
\multiput(70, 60)(10,0){1}{\line(0,1){10}}
\multiput(70,70)(10,0){1}{\line(1,0){10}}
\multiput(80, 70)(10,0){1}{\line(0,1){10}}
\multiput(80,80)(10,0){1}{\line(1,0){10}}
\multiput(90, 80)(10,0){1}{\line(0,1){10}}
\put(12,1){$1$}
\put(22,1){$2$}
\put(32,1){$3$}
\put(42,1){$4$}
\put(52,1){$5$}
\put(62,1){$6$}
\put(72,1){$7$}
\put(82,1){$8$}
\put(1,11){$1$}
\put(1,21){$2$}
\put(1,31){$3$}
\put(1,41){$4$}
\put(1,51){$5$}
\put(1,61){$6$}
\put(1,71){$7$}
\put(1,81){$8$}
\put(41,11){X}
\put(51,21){X}
\put(61,41){X}
\put(71,41){X}
\put(81, 31){X}
\end{picture}
\raisebox{4.5em}{$\qquad\Longleftrightarrow\qquad(6~7~4~1)\,(5~2)\,(8~3)$}
\caption{A file placement with $r=3$, $n=8$, $n-k=5$,
and corresponding permutation in cycle notation.}\label{fig:file}
\end{figure}

Conversely, given a permutation in a cycle notation, we construct a
file placement as follows. In each cycle, put the smallest number at
the end of cycle. Then start from the left-most number, say $\alpha$,
find the left-most number to the right of $\alpha$ which is smaller
than $\alpha$, say $\beta$. Then this places a rook in the cell
$(\alpha, \beta)$. Continue this procedure to the right. For example,
given a permutation $(6~7~4~1)(5~2)(8~3)$, we place rooks in $(6,4)$,
$(7,4)$, $(4,1)$, $(5,2)$ and $(8,3)$, which recovers the file placement
in Figure~\ref{fig:file}. 

We can define an elliptic analogue of the $r$-restricted Stirling numbers
of the
first kind by using the product formula for the elliptic file numbers
with board $\mathsf{St}_n^{(r)}$ as the generating function :
\begin{equation}
( [z]_{a,b;q,p})^r \prod_{i=1}^{n-r}
[z+r+i-1]_{aq^{2(1-i-r)},bq^{1-i-r};q,p}=
\sum_{k=0}^n f_{n-k}(a,b;q,p;\mathsf{St}_n^{(r)})([z]_{a,b;q,p})^k.
\end{equation}
Let ${\mathfrak c}^{(r)}_{a,b;q,p}(n,k)$ denote
$f_{n-k}(a,b;q,p;\mathsf{St}_n^{(r)})$.
Then, by distinguishing whether there is a rook or not in the last column,
we can deduce the recurrence relation of
${\mathfrak c}^{(r)}_{a,b;q,p}(n,k)$, namely, for $k\ge r-1$, 
\begin{align}
{\mathfrak c}_{a,b;q,p}^{(r)}(n+1,k)&=
[n]_{aq^{-2n},bq^{-n};q,p}{\mathfrak c}^{(r)}_{a,b;q,p}(n,k)+
W_{aq^{-2n},bq^{-n};q,p}(n){\mathfrak c}^{(r)}_{a,b;q,p}(n,k-1).
\end{align}
This recursion uniquely determines ${\mathfrak c}^{(r)}_{a,b;q,p}(n,k)$
with the conditions 
\begin{align}
\mathfrak c^{(r)}_{a,b;q,p}(n,k)&=0\qquad\text{for $k<r-1$ or $k>n$},\notag\\
\mathfrak c^{(r)}_{a,b;q,p}(r-1,r-1)&=1.\notag
\end{align}

\subsection{Abel boards and weighted forests}

Let $\mathsf A_n$ denote the \emph{Abel board}, the $[n-1]\times [n]$ board with
column heights $(0,n,\dots, n)$. For $B=\mathsf A_n$, the product formula
involving the file numbers \eqref{eqn:file}, when $q\to 1$, becomes 
$$z(z+n)^{n-1}=\sum_{k=0}^n f_{n-k}(1;\mathsf A_n)z^k.$$
These polynomials are a special case of the general Abel polynomials
$z(z+\alpha n)^{n-1}$, which we consider separately in the discussion following
equation~\eqref{acpol}. 
The coefficient $f_{n-k}(1;\mathsf A_n)=t_{n,k}=
\binom{n-1}{k-1}n^{n-k}$ counts the number
of labeled forests on $n$ vertices composed of $k$ rooted trees \cite{MR}.
Goldman and Haglund explained this equality bijectively in \cite{GH}.
More precisely, they construct a bijection between the set
$$
R_{n,k}=\{(Q, u), ~u\in \{1,2,\dots, n\}\},
$$
where $Q$ is a file placement
of $n-k$ rooks on $\mathsf A_n$, and
$$
F_{n,k}=\{\text{marked rooted forests of $k$
rooted trees on $n$ labeled vertices}\},
$$
where a \emph{marked rooted forest}
is a forest of rooted trees with one distinguished vertex in the forest
(the mark), via constructing bijections between $R_{n,k}$ and $Q_{n,k}$,
and between $Q_{n,k}$ and $F_{n,k}$, where $Q_{n,k}$ is a class of ``marked''
partial endofunctions. 
Thus Goldman and Haglund actually proved $nf_{n-k}(1;\mathsf A_n)=nt_{n,k}$.
We present a new proof which
establishes $f_{n-k}(1;\mathsf A_n)=t_{n,k}$ directly.
In the following, we describe a
bijection between
$$
\mathcal R_{n,k}=
\{\text{file placements of $n-k$ rooks on $\mathsf A_n$}\}
$$
and
$$
\mathcal F_{n,k}=\{\text{rooted forests of $k$
rooted trees on $n$ labeled vertices}\}.
$$
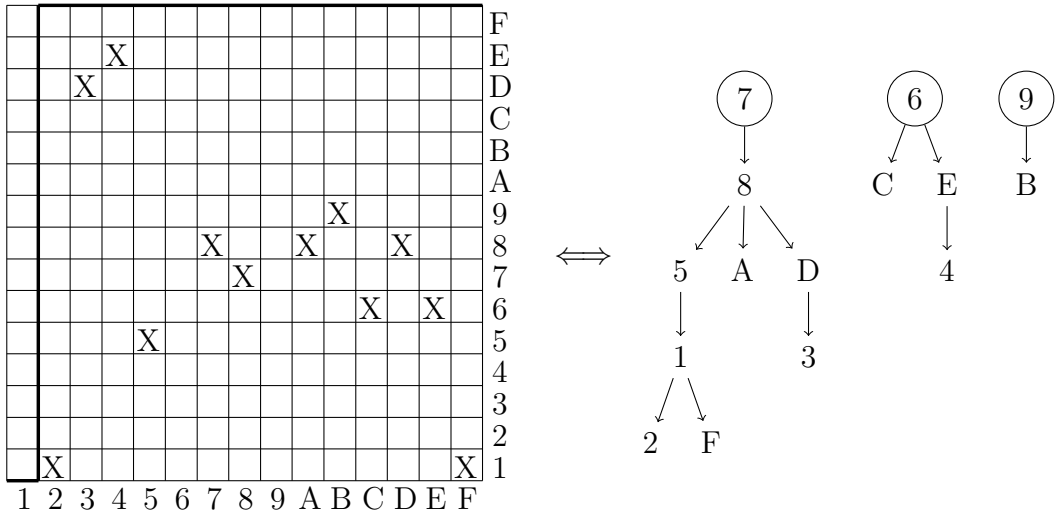
\begin{figure}[ht]
\begin{picture}(150,170)(0,0)
\multiput(10,10)(0,10){16}{\line(1,0){150}}
\multiput(10,10)(10,0){16}{\line(0,1){150}}
\thicklines\linethickness{1.3pt}
\multiput(10,10)(10,0){1}{\line(1,0){10}}
\multiput(20,10)(10,0){1}{\line(0,1){150}}
\multiput(20,160)(10,0){1}{\line(1,0){140}}
\put(13,1){1}
\put(23,1){2}
\put(33,1){3}
\put(43,1){4}
\put(53,1){5}
\put(63,1){6}
\put(73,1){7}
\put(83,1){8}
\put(93,1){9}
\put(102,1){A}
\put(112,1){B}
\put(122,1){C}
\put(132,1){D}
\put(142,1){E}
\put(152,1){F}
\put(163,11){1}
\put(163,21){2}
\put(163,31){3}
\put(163,41){4}
\put(163,51){5}
\put(163,61){6}
\put(163,71){7}
\put(163,81){8}
\put(163,91){9}
\put(162,101){A}
\put(162,111){B}
\put(162,121){C}
\put(162,131){D}
\put(162,141){E}
\put(162,151){F}
\put(21,11){X}
\put(151,11){X}
\put(51,51){X}
\put(121,61){X}
\put(141,61){X}
\put(81,71){X}
\put(71,81){X}
\put(101,81){X}
\put(131,81){X}
\put(111,91){X}
\put(31,131){X}
\put(41,141){X}
\end{picture}
\qquad\quad\raisebox{8em}{$\Longleftrightarrow\;$}
\raisebox{13em}{\begin{forest} baseline
[7,circle,draw [8,edge=-> [5,edge=-> [1,edge=-> [2,edge=->] [F,edge=->]]]
[A,edge=->] [D,edge=-> [3,edge=->]]]]
\end{forest}
$\;$
\begin{forest} baseline
[6,circle,draw [C,edge=->] [E,edge=-> [4,edge=->]]]
\end{forest}
$\;$
\begin{forest} baseline
[9,circle,draw [B,edge=->]]
\end{forest}}
\caption{A file placement with $n=15$, $k=3$,
and corresponding forest.}\label{fig:abel}
\end{figure}

The bijection is best described by considering an example,
see Figure~\ref{fig:abel}, where for nicer layout, we have labeled
the indices by hexadecimal digits. (Moreover, in the forest,
we have circled the labels of the roots of the trees.)
In this example, we have $n=15$ and $k=3$; we thus consider a $12$-file
placement of rooks on the $\mathsf A_{15}$ board.
We shall successively transform this file placement into a forest of
$15-12=3$ labeled trees.
The rooks are in positions $(2,1)$, $(F,1)$, $(5,5)$, $(C,6)$, $(E,6)$,
$(8,7)$, $(7,8)$, $(A,8)$, $(D,8)$, $(B,9)$, $(3,D)$, $(4,E)$, listed
from bottom to top, left to right.
Now we first identify the indices of the $k=3$ empty columns,
which are $1$, $6$, $9$. We let $1$ (which is special since the
first column is always empty) be the preliminary label of the
root of the first tree, while the other two numbers, $6$ and $9$,
be the labels of the respective roots of the second and third trees.
Note that these other roots shall not change whereas the first root can
change.
Next we look at the positions of all the rooks, first row-wise
from bottom to top and then from left to right within each row.
Now we interpret a rook in position $(i,j)$ as a directed edge
(forming a simple path) from vertex $j$ to vertex $i$.
After this we continue the path by looking for
rooks in positions $(l,i)$, in which case the path continues to
go from vertex $i$ to vertex $l$, etc.
We transitively collect all paths in the file placement
(in the prescribed order) and finally obtain an ordered collection
of maximal chains (of paths which cannot be continued),
and of disjoint cycles (where vertices already visited are reached again).
Numbers occurring in the cycles can also appear in the chains. 
In our example, we have the following seven chains and two cycles.

\smallskip
maximal chains:

$1\to 2$, $1\to F$, $6\to C$, $6\to E\to 4$,
$7\to 8\to A$, $7\to 8\to D\to 3$, $9\to B$.

\smallskip
cycles: $(5)$, $(7\,8)$.

\smallskip
It is important to insist that the minimal elements of the respective
cycles are listed first.
In the algorithm, the cycles are indeed obtained in the above order,
i.e., by left-to-right increasing order of their minimal elements.
Now, say we have obtained $l$ cycles, $\gamma_1,\dots,\gamma_l$, listed
by increasing order of their minimal elements.
We then reverse the order of the $l$ cycles,
i.e., write out $\gamma_l,\dots,\gamma_1$ in decreasing order
of their minimal elements while keeping the minimal
elements of each cycle at the first position.
In our example, the two cycles are thus relisted as
$(7\,8)$, $(5)$.

Now we form a new chain using all the labels from left to right
appearing in the complete list of cycles,
here $7\to8\to5$, and place this in the first tree before $1$ (i.e., we
also put an edge leading from the last vertex $5$ to $1$).
Hence, $7$ is now the new root of the first tree and we have a path
leading to $1$.
(After a short moment of reflection, this part of the 
correspondence is easily noticed to be reversible, since $5$ is the minimal
element before $1$, the minimal element before $5$
is $7$, etc., thus all the cycles can readily be determined.)

What remains to be done is to translate the maximal chains obtained
from the file placement to form trees in the forest.
This is done in the obvious way (and is clearly reversible); see
Figure~\ref{fig:abel} for the result.

Since there is at most one rook in each column of the file placement,
it is guaranteed that each vertex in the corresponding directed graph has
at most one predecessor, i.e.\ the resulting graph is indeed a forest.
Thus, we can conclude that each forest of $n$ labeled vertices composed of
$k$ components corresponds to exactly one $(n-k)$-rook file placement on the
board $\mathsf A_{n}$ and vice versa.

For the Abel board
$\mathsf A_n$, the product formula in Theorem~\ref{thm:elptfile} becomes 
\begin{equation}\label{prabel}
[z]_{a,b;q,p}([z+n]_{aq^{-2n},bq^{-n};q,p})^{n-1}=
\sum_{k=0}^n f_{n-k}(a,b;q,p;\mathsf A_n)([z]_{a,b;q,p})^k.
\end{equation}
As explained above, we can interpret the
coefficient $f_{n-k}(a,b;q,p;\mathsf A_n)$ as the weighted sum of
labeled forests on $n$ vertices composed of $k$ rooted trees.
In the above algorithm for obtaining the forest,
we weight the edge $j\to i$ by $\prod_{l=1}^{j-1}w_{a,b;q,p}(l)$
which in the file placement
corresponds to the product of the weights of uncancelled cells
above $(i,j)$. If there is an empty column containing no rooks,
then we weight the vertex corresponding to such a column by
$\prod_{l=1}^n w_{a,b;q,p}(l)$. 
This yields a weighted forest of $k$
rooted labeled trees on $n$ vertices corresponding to a given
$(n-k)$-file placement. 

The coefficients in \eqref{prabel} have a nice closed form
\begin{equation}\label{prabel1}
f_{n-k}(a,b;q,p;\mathsf A_n)=\binom{n-1}{k-1}
\left(W_{aq^{-2n},bq^{-n};q,p}(n)\right)^{k-1}
\left([n]_{aq^{-2n},bq^{-n};q,p}\right)^{n-k},
\end{equation}
which is easy to prove directly combinatorially, or by using the identity
$$
[z+n]_{aq^{-2n},bq^{-n}}=[n]_{aq^{-2n},bq^{-n}}+
W_{aq^{-2n},bq^{-n};q,p}(n)[z]_{aq^{-2n},bq^{-n}}
$$
(which is the $(y,z,a,b)\mapsto(n,z+n,aq^{-2n},bq^{-n})$ case of
Equation~\eqref{recellny}),
together with the classical binomial theorem.

The above bijection easily extends to the case of $r$-restricted Abel
boards $\mathsf A_n^{(r)}=B(0,\dots,0,n,\dots,n)$ of
$r$ columns of height zero and $n-r$ columns of height $n$.
Say, we consider a file placement on  $\mathsf A_n^{(r)}$.
Then the bijection transforms a file placement of $n-k$
rooks on this board to a forest of $k$ components where
the first $r$ numbers $1,2,\dots,r$ are in distinct trees and
the $r-1$ numbers $2,\dots,r$ are roots.
Now, by interchanging the labels $1$ and $r$ we immediately
obtain a forest of $n$ vertices of $k$ labeled trees, where
the first $r$ numbers $1,2,\dots,r$ are in distinct trees 
and where moreover the first $r-1$ numbers $1,2,\dots,r-1$ are all roots
(among the $k$ roots of the forest).
The number of such forests is 
$f_{n-k}(1;\mathsf A^{(r)}_n)=t^{(r)}_{n,k}=\binom{n-r}{k-r}n^{n-k}$.
Given the analogy to the $r$-restricted Stirling numbers of
the first and second kinds and of the $r$-restricted Lah numbers,
it seems appropriate to refer to $t_{n,k}$ as \emph{Abel numbers} and to
$t^{(r)}_{n,k}$ as \emph{$r$-restricted Abel numbers}.

For the $r$-restricted Abel board
$\mathsf A_n^{(r)}$, the product formula in Theorem~\ref{thm:elptfile} becomes 
\begin{equation}\label{prabelr}
([z]_{a,b;q,p})^r([z+n]_{aq^{-2n},bq^{-n};q,p})^{n-r}=
\sum_{k=r-1}^n f_{n-k}(a,b;q,p;A_n^{(r)})([z]_{a,b;q,p})^k.
\end{equation}

The coefficients in \eqref{prabelr} have a nice closed form
\begin{equation}\label{prabel1r}
f_{n-k}(a,b;q,p;\mathsf A_n^{(r)})=\binom{n-r}{k-r}
\left(W_{aq^{-2n},bq^{-n};q,p}(n)\right)^{k-r}
\left([n]_{aq^{-2n},bq^{-n};q,p}\right)^{n-k}.
\end{equation}

Lastly, we consider the general Abel board
$\mathsf A_{\alpha n,n}=B(0,\alpha n,\dots,\alpha n)=[n-1]\times[\alpha n]$ for a
positive integer $\alpha$.
The coefficients of the Abel polynomials
\begin{equation}\label{acpol}
z(z+\alpha n)^{n-1}=\sum_{k=0}^n f_{n-k}(1;\mathsf A_{\alpha n,n})z^k
\end{equation}
have a very  simple combinatorial interpretation.
They count the number of forests of $n$ labeled vertices composed of $k$ rooted
trees where each of the vertices can have one of $\alpha$ colors
(distinct vertices may have the same color),
where the $k$ roots must all have the first color.
This number of course is $\alpha^{n-k}$ times the usual monocolor case,
since each of the $n-k$ vertices which are not roots can assume one
of $\alpha$ colors. We therefore have
$f_{n-k}(1;\mathsf A_{\alpha n,n})=t_{\alpha,n,k}=
\binom{n-1}{k-1}(\alpha n)^{n-k}$.
This interpretation also easily comes out of the file placement model
as follows. If there is a rook in position $(i,(c-1)n+j)$
for $1\le i,j\le n$ and $1\le c\le \alpha$, then form a directed
path from $j$ to $i$ and assign color $c$ to the vertex $i$.

Even more generally, we could consider the Abel board
$\mathsf A_{\alpha n,n}$ with
$\alpha=\frac mn$, i.e., $\mathsf A_{m,n}=[n-1]\times[m]$ for a
positive integer $m$. Even here it is not difficult to give a combinatorial
interpretation (which is consistent with the file placement
model). The coefficients of the polynomials
\begin{equation}\label{mnforest}
z(z+m)^{n-1}=\sum_{k=0}^n f_{n-k}(1;\mathsf A_{m,n})z^k
\end{equation}
count the number of forests of $n$ labeled vertices composed of $k$ rooted
trees where each of the vertices can have one of $\lceil \frac mn\rceil$
colors (distinct vertices may have the same color),
the $k$ roots must all have the first color, but only
the successors of $1,2,\dots,m-\lfloor\frac{m-1}n\rfloor n$
are allowed to assume the highest color $\lceil \frac mn\rceil$.
Here $\lceil x\rceil:=\min\{y\in\mathbb Z:y\ge x\}$
and  $\lfloor x\rfloor:=\max\{y\in\mathbb Z:y\le x\}$ are the ceiling
and floor functions, respectively.

For instance, if $m=4$ and $n=3$, Equation~\eqref{mnforest} becomes
$z(z+4)^2=z^3+8z^2+16z$. Accordingly, using the vertices $1,2,3$ we
form colored forests containing exactly $k$ trees where
we may use two colors, say, black and white, to color the vertices
but only the successors of $1$ can be white. There is exactly
one forest containing three trees, each consisting of only a root.
There are $8$ such forests containing two trees, see Figure~\ref{fig:abelmn},
and there are $16$ such forests containing exactly one tree,
see Figure~\ref{fig:abelmn1},
where for convenience primed labels indicate
white vertices.

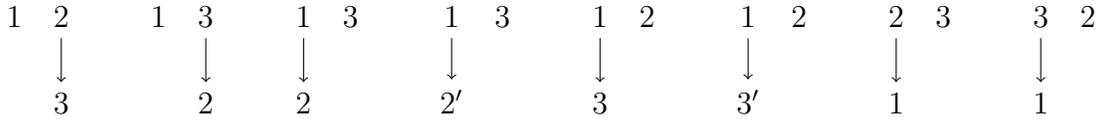
\begin{figure}[ht]
\begin{forest} baseline
[1]
\end{forest}
\begin{forest} baseline
[2 [3,edge=->]]
\end{forest}
\quad$\;$
\begin{forest} baseline
[1]
\end{forest}
\begin{forest} baseline
[3 [2,edge=->]]
\end{forest}
\quad$\;$
\begin{forest} baseline
[1 [2,edge=->]]
\end{forest}
\begin{forest} baseline
[3]
\end{forest}
\quad$\;$
\begin{forest} baseline
[1 [$2'$,edge=->]]
\end{forest}
\begin{forest} baseline
[3]
\end{forest}
\quad$\;$
\begin{forest} baseline
[1 [3,edge=->]]
\end{forest}
\begin{forest} baseline
[2]
\end{forest}
\quad$\;$
\begin{forest} baseline
[1 [$3'$,edge=->]]
\end{forest}
\begin{forest} baseline
[2]
\end{forest}
\quad$\;$
\begin{forest} baseline
[2 [1,edge=->]]
\end{forest}
\begin{forest} baseline
[3]
\end{forest}
\quad$\;$
\begin{forest} baseline
[3 [1,edge=->]]
\end{forest}
\begin{forest} baseline
[2]
\end{forest}
\caption{Colored forests of two trees corresponding to the board
$\mathsf A_{4,3}$.}
\label{fig:abelmn}
\end{figure}

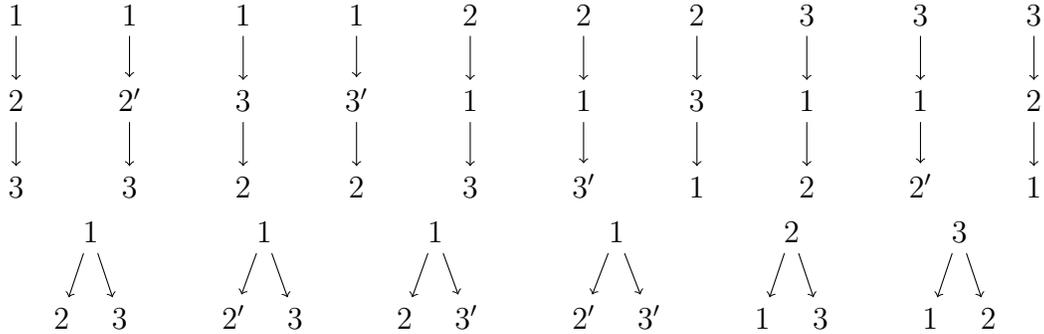
\begin{figure}[ht]
\begin{forest} baseline
[1 [2,edge=->[3,edge=->]]]
\end{forest}
\qquad
\begin{forest} baseline
[1 [$2'$,edge=->[3,edge=->]]]
\end{forest}
\qquad
\begin{forest} baseline
[1 [3,edge=->[2,edge=->]]]
\end{forest}
\qquad
\begin{forest} baseline
[1 [$3'$,edge=->[2,edge=->]]]
\end{forest}
\qquad
\begin{forest} baseline
[2 [1,edge=->[3,edge=->]]]
\end{forest}
\qquad
\begin{forest} baseline
[2 [1,edge=->[$3'$,edge=->]]]
\end{forest}
\qquad
\begin{forest} baseline
[2 [3,edge=->[1,edge=->]]]
\end{forest}
\qquad
\begin{forest} baseline
[3 [1,edge=->[2,edge=->]]]
\end{forest}
\qquad
\begin{forest} baseline
[3 [1,edge=->[$2'$,edge=->]]]
\end{forest}
\qquad
\begin{forest} baseline
[3 [2,edge=->[1,edge=->]]]
\end{forest}
\qquad
\begin{forest} baseline
[1 [2,edge=->] [3,edge=->]]
\end{forest}
\qquad
\begin{forest} baseline
[1 [$2'$,edge=->] [3,edge=->]]
\end{forest}
\qquad
\begin{forest} baseline
[1 [2,edge=->] [$3'$,edge=->]]
\end{forest}
\qquad
\begin{forest} baseline
[1 [$2'$,edge=->] [$3'$,edge=->]]
\end{forest}
\qquad
\begin{forest} baseline
[2 [1,edge=->] [3,edge=->]]
\end{forest}
\qquad
\begin{forest} baseline
[3 [1,edge=->] [2,edge=->]]
\end{forest}
\caption{Colored forests of one tree corresponding to the board
$\mathsf A_{4,3}$.}
\label{fig:abelmn1}
\end{figure}

The above generalized Abel case involving file placements on the board
$\mathsf A_{m,n}$
can even be extended to the $r$-restricted case where for a nonnegative
integer $r$ we consider the board $\mathsf A^{(r)}_{m,n}=[n-r]\times[m]$.
We have the same combinatorial interpretation for the colored forests
but with the additional restriction that the numbers $1,2,\dots,r$ are
in different trees and $1,2\dots,r-1$ are roots.
 
In the elliptic case, we have
\begin{subequations}
\begin{equation}\label{prabelrmell}
([z]_{a,b;q,p})^r([z+m]_{aq^{-2m},bq^{-m};q,p})^{n-r}=
\sum_{k=0}^n f_{n-k}(a,b;q,p;\mathsf A^{(r)}_{m,n})([z]_{a,b;q,p})^k
\end{equation}
and for the coefficients we have the explicit formula
\begin{equation}\label{prabel1m}
f_{n-k}(a,b;q,p;\mathsf A^{(r)}_{m,n})=
\binom{n-r}{k-r}\left(W_{aq^{-2m},bq^{-m};q,p}(m)\right)^{k-r}
\left([m]_{aq^{-2m},bq^{-m};q,p}\right)^{n-k}.
\end{equation}
\end{subequations}


\subsection{Elliptic analogue of generalized Stirling numbers
of the first kind}


Upon the introduction of the generalized $(\mathfrak p, q)$-Stirling numbers
of the second kind
(see Section \ref{subsec:pqstir} for the elliptic analogue of them), 
Remmel and Wachs \cite{RW} also introduced and studied the generalized
$(\mathfrak p, q)$-Stirling numbers 
of the first kind, denoted by $\mathfrak c_{n,k}^{\I, \J}(\mathfrak p, q)$.
In \cite{RW}, it is shown that $\mathfrak c_{n,k}^{\I, \J}(\mathfrak p, q)$
satisfy
$\mathfrak c_{0,0}^{\I, \J}(\mathfrak p, q)=1$,
$\mathfrak c_{n,k}^{\I, \J}(\mathfrak p, q)=0$
if $k<0$ or $k>n$, and 
$$\mathfrak c_{n+1,k}^{\I, \J}(\mathfrak p, q)
= \mathfrak c_{n,k-1}^{\I, \J}(\mathfrak p, q)+
[\I+n\J]_{\mathfrak p,q}\mathfrak c_{n,k}^{\I, \J}(\mathfrak p, q).$$
It is also shown that 
\begin{equation}
([z]_{\mathfrak p,q}+[\I]_{\mathfrak p,q})([z]_{\mathfrak p,q}+[\I+\J]_{\mathfrak p,q})
\cdots ([z]_{\mathfrak p,q}+[\I+(n-1)\J]_{\mathfrak p,q})
=\sum_{k=0}^n \mathfrak c_{n,k}^{\I, \J}(\mathfrak p, q)([z]_{\mathfrak p,q})^k.
\end{equation}
$\mathfrak c_{n,k}^{\I, \J}(\mathfrak p, q)$ can be computed as a
weighted sum of file placements in $B_{\I, \J,n}$. Recall that 
$B_{\I, \J,n}=B(\I, \I+\J, \dots, \I+(n-1)\J)$. 

Given a file placement
$Q\in \mathcal{F}_{k}(B)$, for $B$ a skyline board, 
define
$$w_{\mathfrak p, q,B}(Q)=q^{\alpha_B (Q)}\mathfrak p^{\beta_B(Q)},$$
where
\begin{align*}
\alpha_B (Q) & = \text{ the number of cells in $B$ that lie above
some rook $\mathbf r$ in $Q$},\\
\beta_B (Q) &= \text{ the number of cells in $B$ that lie below
some rook $\mathbf r$ in $Q$}.
\end{align*}
If we define 
$$\tilde{f}_k (\mathfrak p,q;B)=\sum_{Q\in \mathcal{F}_k (B)}w_{\mathfrak p, q,B}(Q),$$
then for $0\le k\le n$ it is shown in \cite{RW} that 
$$\mathfrak c_{n,k}^{\I, \J}(\mathfrak p, q)=
\tilde{f}_{n-k} (\mathfrak p,q;B_{\I,\J,n}).$$

We now establish an elliptic analogue of
$\mathfrak c_{n,k}^{\I, \J}(\mathfrak p, q)$
by modifying the weight function 
$w_{\mathfrak p, q,B}(Q)$ to an elliptic function. Given a skyline board $B$,
define 
$$\tilde{f}_k (a,b;q,p;B)=\sum_{Q\in\mathcal{F}_k(B)}\widetilde{wt}_f (Q),$$
where 
\begin{equation}\label{eqn:filewt'}
\widetilde{wt}_f (Q) =\prod_{(i,j)\in\mathcal{A}_B (Q)}w_{a,b;q,p}(i-j),
\end{equation}
and $\mathcal{A}_B (Q)$ is the set of cells in $B$ that lie above some rook
$\mathbf r$ in $Q$. 

\begin{proposition}\label{prop:ellfile}
For a skyline board $B=B(c_1,\dots, c_n)$, we have  
\begin{equation}\label{eqn:ellfile2}
\prod_{i=1}^n \left([z]_{a,b;q,p}+[c_i]_{aq^{2(i-1-c_i)},bq^{i-1-c_i};q,p} \right)=
\sum_{k=0}^n \tilde{f}_{n-k}(a,b;q,p;B)([z]_{a,b;q,p})^k.
\end{equation}
\end{proposition}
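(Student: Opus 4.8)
The plan is to prove \eqref{eqn:ellfile2} by regarding both sides as polynomials in the single quantity $t:=[z]_{a,b;q,p}$ and comparing coefficients of $t^k$. Unlike the proof of Theorem~\ref{thm:elptfile}, no extended board is needed here; in fact, since the weight $\widetilde{wt}_f$ in \eqref{eqn:filewt'} depends on the column coordinate, a naive column-by-column count on the board obtained by appending $[n]\times[z]$ below $B$ would produce $\prod_{i=1}^n[z+c_i]_{aq^{2(i-1-c_i)},bq^{i-1-c_i};q,p}$ rather than the left-hand side of \eqref{eqn:ellfile2}, so that device is not the right tool. Instead, expanding the product on the left of \eqref{eqn:ellfile2} gives
\[
\prod_{i=1}^n\big(t+[c_i]_{aq^{2(i-1-c_i)},bq^{i-1-c_i};q,p}\big)=\sum_{k=0}^n t^k\,e_{n-k},
\]
where $e_{n-k}$ denotes the $(n-k)$-th elementary symmetric function of the $n$ quantities $[c_i]_{aq^{2(i-1-c_i)},bq^{i-1-c_i};q,p}$, $i=1,\dots,n$. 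Thus it suffices to show $\tilde f_{n-k}(a,b;q,p;B)=e_{n-k}$ for all $k$.

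The key step is a single-column computation. Fix a column $i$ (of height $c_i$) and suppose a rook is placed in it at row $h$, $1\le h\le c_i$. The cells of $B$ lying above this rook in column $i$ are precisely those in rows $h+1,\dots,c_i$, so they contribute the factor $\prod_{j=h+1}^{c_i}w_{a,b;q,p}(i-j)$ to $\widetilde{wt}_f$. Applying \eqref{wshift} with the shift $k\mapsto i-1-c_i$, that is, $w_{a,b;q,p}(i-j)=w_{aq^{2(i-1-c_i)},bq^{i-1-c_i};q,p}(c_i+1-j)$, and reindexing, this product telescopes to $W_{aq^{2(i-1-c_i)},bq^{i-1-c_i};q,p}(c_i-h)$. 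Summing over the $c_i$ possible rook rows and invoking \eqref{recelln} together with $W_{\cdot\,;q,p}(0)=1$ and $[0]_{\cdot\,;q,p}=0$, we obtain
\[
\sum_{h=1}^{c_i}W_{aq^{2(i-1-c_i)},bq^{i-1-c_i};q,p}(c_i-h)=\sum_{m=0}^{c_i-1}W_{aq^{2(i-1-c_i)},bq^{i-1-c_i};q,p}(m)=[c_i]_{aq^{2(i-1-c_i)},bq^{i-1-c_i};q,p}.
\]

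To assemble the proof, observe that a file placement of $n-k$ rooks in $B$ is the same datum as a choice of an $(n-k)$-element set $S\subseteq[n]$ of columns together with, for each $i\in S$, a row for the rook in column $i$; moreover $\widetilde{wt}_f$ factors as a product over $i\in S$ of the column-$i$ contributions, since the cells above rooks in distinct columns are disjoint and independent. Hence, by the single-column identity,
\[
\tilde f_{n-k}(a,b;q,p;B)=\sum_{\substack{S\subseteq[n]\\ |S|=n-k}}\;\prod_{i\in S}[c_i]_{aq^{2(i-1-c_i)},bq^{i-1-c_i};q,p}=e_{n-k},
\]
which is what was required. (Columns with $c_i=0$ contribute the factor $[0]_{\cdot\,;q,p}=0$ and cannot hold a rook, so they drop out of both sides consistently; and since both sides are polynomials in $[z]_{a,b;q,p}$, the identity holds for all $z$ for which the elliptic numbers are defined, with no separate analytic continuation required.) The only points that demand care are the bookkeeping of the parameter shift \eqref{wshift} and the telescoping via \eqref{recelln}; I do not expect a genuine obstacle, the statement being essentially a repackaging of the file-placement count, which is precisely why it avoids the extended-board argument used for the rook-number factorization theorems.
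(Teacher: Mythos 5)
Your argument is correct, and it takes a genuinely different route from the paper. The paper proves \eqref{eqn:ellfile2} by double counting on the extended board $B_z$ (for nonnegative integer $z$): it introduces a modified per-rook weight $\overline{w}_{a,b;q,p;B_z}(i,j)$, equal to $W_{aq^{2(i-1-c_i)},bq^{i-1-c_i};q,p}(c_i-j)$ above the ground and to the column-independent $W_{a,b;q,p}(-j)$ below it, and then evaluates the total weight of all $n$-rook file placements once column by column (giving the factors $[z]_{a,b;q,p}+[c_i]_{aq^{2(i-1-c_i)},bq^{i-1-c_i};q,p}$) and once by separating the rooks in $B$ from those below the ground (giving $\tilde f_{n-k}\,([z]_{a,b;q,p})^k$). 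You instead bypass the extended board entirely and prove the stronger, purely algebraic statement
$\tilde f_{n-k}(a,b;q,p;B)=e_{n-k}\bigl([c_1]_{aq^{-2c_1},bq^{-c_1};q,p},\dots,[c_n]_{aq^{2(n-1-c_n)},bq^{n-1-c_n};q,p}\bigr)$,
using that $\widetilde{wt}_f$ in \eqref{eqn:filewt'} factors over the columns containing rooks and that each single-column sum telescopes, via \eqref{wshift} and \eqref{recelln}, to $[c_i]_{aq^{2(i-1-c_i)},bq^{i-1-c_i};q,p}$ (with empty columns correctly handled by $[0]=0$). The core computations (the column telescoping and the per-column factorization) are the same ones hidden inside the paper's two counts, but your packaging buys something: an explicit closed formula for $\tilde f_{n-k}$ as an elementary symmetric function, and an identity valid for arbitrary $z$ with no restriction to integers and no analytic-continuation step, since the proposition becomes a polynomial identity in $[z]_{a,b;q,p}$. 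What the paper's version buys in exchange is uniformity with the other factorization proofs (Theorem~\ref{thm:elptfile} and Theorem~\ref{thm:elptprod}), where the column weights do not decouple so simply and the extended-board device is genuinely needed; your opening remark correctly identifies why the naive extended-board count with the weight \eqref{eqn:filewt'} would instead produce $\prod_i[z+c_i]_{aq^{2(i-1-c_i)},bq^{i-1-c_i};q,p}$, which is precisely why the paper modifies the below-ground weight.
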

\begin{proof}
We consider the extended board $B_z$ and compute the sum 
$$
\sum_{P\in \mathcal{F}_n (B_z)}\overline{wt}_f (P)
$$
in two different ways, where 
$$
\overline{wt}_f (P) = \prod_{(i,j)\in P}\overline{w}_{a,b;q,p;B_z}(i,j),
$$
$$\overline{w}_{a,b;q,p;B_z}(i,j) = \left\{
\begin{array}{ll}
W_{aq^{2(i-1-c_i)},bq^{i-1-c_i};q,p}(c_i-j), & \text{ if } (i,j)\in B,\\
W_{a,b;q,p}(-j),& \text{ if $(i,j)$ is below the ground.} 
\end{array}
\right.$$

Recall that the line separating the board $B$ and the extended part of
$B_z$ is called the ground, and the 
row coordinates below the ground are $0,-1,-2,\dots, 1-z$, from top to bottom. 
To obtain the left-hand side of \eqref{eqn:ellfile2}, we place
$n$ rooks column by column. 
In $i$-th column, possible placements of a rook above the ground contribute 
$$1+W_{aq^{2(i-1-c_i)},bq^{i-1-c_i};q,p}(1)+\cdots + W_{aq^{2(i-1-c_i)},bq^{i-1-c_i};q,p}(c_i-1)=
[c_i]_{aq^{2(i-1-c_i)},bq^{i-1-c_i};q,p}$$
and possible placements below the ground give 
$$1+W_{a,b;q,p}(1)+\cdots + W_{a,b;q,p}(z-1)=[z]_{a,b;q,p}.$$
Sum of $[c_i]_{aq^{2(i-1-c_i)},bq^{i-1-c_i};q,p}$ and $[z]_{a,b;q,p}$ gives the $i$-th factor 
in the left-hand side of \eqref{eqn:ellfile2}. 

To get the right-hand side of \eqref{eqn:ellfile2}, we start with a
file placement $Q\in\mathcal{F}_{n-k}(B)$ 
and extend it to a file placement of $n$ rooks by placing $k$ rooks below
the ground.
In each empty column, possible placements of a rook below the ground give
$[z]_{a,b;q,p}$,
as computed above, hence placements of $k$ rooks below the ground will
give the factor $([z]_{a,b;q,p})^k$.
Note that $\overline{w}_{a,b;q,p;B_z}(i,j) =W_{aq^{2(i-1-c_i)},bq^{i-1-c_i};q,p}(c_i-j)$
for $(i,j)\in B$ is 
defined so that $\overline{w}_{a,b;q,p;B_z}(i,j)$ equals to
$\prod_{t=j+1}^{c_i}w_{a,b;q,p}(i-t)$ for $(i,t)$ 
being the coordinates of the cells above the rook in $(i,j)$. Thus,
\begin{align*}
\sum_{P\in \mathcal{F}_n (B_z)}\overline{wt}_f (P) &=
\sum_{k=0}^n \left(\sum_{Q\in\mathcal F_{n-k}(B)}
\prod_{(i,j)\in Q}\overline{w}_{a,b;q,p;B}(i,j) \right)  ([z]_{a,b;q,p})^k\\
&= \sum_{k=0}^n \tilde{f}_{n-k}(a,b;q,p;B)([z]_{a,b;q,p})^k.\\[-3.6em]
\end{align*}
\end{proof}

\smallskip
If we apply Proposition \ref{prop:ellfile} to the board $B_{\I, \J, n}$,
then we get 
\begin{align*}
\prod_{s=1}^n \left([z]_{a,b;q,p}+
[\I +(s-1)\J]_{aq^{-2(\I +(s-1)(\J-1))},bq^{-(\I +(s-1)(\J-1))};q,p} \right)&\\
=\sum_{k=0}^n \tilde{f}_{n-k}(a,b;q,p;B_{\I, \J, n})([z]_{a,b;q,p})^k&.
\end{align*}
We can define $\mathfrak c_{n,k}^{\I, \J}(a,b;q,p):=
\tilde{f}_{n-k}(a,b;q,p;B_{\I, \J, n})$ which is an elliptic analogue of
$\mathfrak c_{n,k}^{\I, \J}(\mathfrak p, q)$. 
Then, by considering whether there is a rook or not in the last
column of $B_{\I, \J, n}$,  we get the recurrence relation 
$$\mathfrak c_{n+1,k}^{\I, \J}(a,b;q,p)=\mathfrak c_{n,k-1}^{\I, \J}(a,b;q,p)+
[\I +n\J]_{aq^{-2(\I +n(\J-1))},bq^{-(\I +n(\J-1))};q,p}
\mathfrak c_{n,k}^{\I, \J}(a,b;q,p)$$
which determines $\mathfrak c_{n,k}^{\I, \J}(a,b;q,p)$ with the conditions 
$$\mathfrak c_{0,0}^{\I, \J}(a,b;q,p)=1, \text{ and } \quad
\mathfrak c_{n,k}^{\I, \J}(a,b;q,p)=0 \text{ if $k<0$ or $k>n$}.$$

\begin{remark}
In \cite{RW}, Remmel and Wachs showed that for
$s_{n,k}^{\I, \J}(\mathfrak p,q)=
(-1)^{n-k}\mathfrak c_{n,k}^{\I, \J}(\mathfrak p,q)$, the
two matrices $||s_{n,k}^{\I, \J}(\mathfrak p,q)||$ and
$||S_{n,k}^{\I, \J}(\mathfrak p,q)||$ 
are inverses of each other (we refer to Section~\ref{subsec:pqstir}
for the definition
of $S_{n,k}^{\I, \J}(\mathfrak p,q)$), namely, for all $0\le r \le n$, 
\begin{equation}\label{eqn:Stirlingmtx}
\sum_{k=r}^n S_{n,k}^{\I, \J}(\mathfrak p,q)s_{k,r}^{\I, \J}(\mathfrak p,q)=\chi (r=n).
\end{equation}
Notice that the elliptic weights of the uncancelled cells used in \eqref{eqn:jwt} to define the 
rook numbers are different from the elliptic weights used in \eqref{eqn:filewt'},
that is, the former ones depend on the value $\J$
whereas the latter ones only depend on the coordinates of the cells,
as opposed to them being the same in the $q$-case (or $\mathfrak p,q$-case). 
Hence, the cancellation occurring in the process of proving
\eqref{eqn:Stirlingmtx} does not occur in the same way 
in the elliptic case, and as a consequence, we do not have the property that 
the matrices $||s_{n,k}^{\I, \J}(a,b;q,p)||$ 
and $||S_{n,k}^{\I, \J}(a,b;q,p)||$ are inverses of each other, 
for $s_{n,k}^{\I, \J}(a,b;q,p)=(-1)^{n-k}\mathfrak c_{n,k}^{\I, \J}(a,b;q,p)$. 
However, if we set $\I=0$ and $\J=1$, then the elliptic weights of the cells used in the rook and 
file numbers are equal, and thus we can show that the
two matrices $||s_{n,k}^{0,1}(\mathfrak p,q)||$ and
$||S_{n,k}^{0,1}(\mathfrak p,q)||$ 
are inverses of each other. 
\end{remark}


\section{Future perspectives}\label{sec:fp}


\subsection{Elliptic rook numbers on augmented boards}
In \cite{McR}, Miceli and Remmel introduced a generalized rook model by 
considering rook placements on \emph{augmented boards} and proved the
corresponding product formula as well as the $q$-analogue 
of it.
This product formula can be specialized to all the known product formulas 
including the $i$-creation model of Goldman and
Haglund~\cite{GH}. In a separate paper \cite{SY0} we provide
an elliptic extension of the $q$-case of this model.
In particular, the elliptic extension of the product formula in \cite{SY0}
includes our elliptic product formula of the $i$-creation model
as a special case.

\subsection{Elliptic rook theory for matchings}

By considering rook placements on shifted Ferrers boards subject
to a suitable modification of rook cancellation,
Haglund and Remmel~\cite{HR} developed a $q$-rook theory for
matchings of graphs. We provide an elliptic extension
of this rook theory in \cite{SY1}.
We actually consider a more general model
there related to matchings on certain graphs
which we call ``$\mathbf l$-lazy graphs'' with respect to a
$N$-dimensional vector of positive integers $\mathbf l=(l_1,\dots,l_N)$.
These matchings correspond to rook placements on $\mathbf l$-shifted
boards for which we essentially employ the same rook cancellation
as in the ordinary shifted case considered by Haglund and Remmel.
The elliptic case is more intricate than the $q$-case and
requires a very careful choice of weights.
Our factorization theorem for elliptic rook numbers for matchings
of $\mathbf l$-lazy graphs in \cite{SY1}
generalizes the factorization theorem by Haglund and Remmel
already in the ordinary case and the $q$-case.
In the simplest case, our formula can be used to
deduce an elliptic extension of the numbers of perfect matchings
of the complete graph $K_{2n}$.

\subsection{Elliptic analogue of the hit numbers}

Upon the introduction of the rook numbers, Kaplansky and Riordan~\cite{KR}
also defined the hit numbers. Let $\mathcal{H}_{n,k}(B)$ be the set of
all placements of $n$ rooks on $[n]\times [n]$ such that exactly $k$
of these rooks lie on $B$. Then $h_{n,k}(B):=|\mathcal{H}_{n,k}(B)|$ is
called the $k$-th \emph{hit number} of $B$. Kaplansky and Riordan~\cite{KR}
showed 
\begin{equation}\label{eqn:hit}
\sum_{k=0}^n r_{n-k}(B) k!(z-1)^{n-k}=\sum_{k=0}^n h_{n,k}(B)z^k.
\end{equation}
Garsia and Remmel defined in \cite{GR} the $q$-hit numbers of a Ferrers
board $B\subseteq [n]\times [n]$ algebraically by the equation 
\begin{equation}\label{eqn:qhit}
\sum_{k=0}^n r_{n-k}(q;B)[k]_q ! z^k\prod_{i=k+1}^n (1-z q^i)=
\sum_{k=0}^n h_{n,k}(q;B) z^{n-k}
\end{equation}  
and proved the existence of the statistic $\text{stat}_{n,B}(P)$ such that 
$$h_{n,k}(q;B)=\sum_{P\in \mathcal{H}_{n,k}(B)}q^{\text{stat}_{n,B}(P)}.$$
They did not give a specific description for $\text{stat}_{n,B}(P)$,
 but later, Dworkin \cite{D} and Haglund \cite{H} independently found
combinatorial descriptions. Hence the natural quest is to
establish an elliptic analogue of the relation \eqref{eqn:qhit}
and define elliptic hit numbers accordingly.

Similarly, together with the file numbers,
Garsia and Remmel also defined the fit numbers. 
Let $\mathcal{F}_{n,k}(B)$ be the set of
all file placements of $n$ rooks on $[n]\times [n]$
such that exactly $k$
of these rooks lie on $B$. Then $f_{n,k}(B):=|\mathcal{F}_{n,k}(B)|$ is
called the $k$-th \emph{fit number} of $B$.
Garsia and Remmel (cf.\ \cite{BCHR}) showed 
\begin{equation}\label{eqn:fit}
\sum_{k=0}^n f_{n-k}(B)n^k(z-1)^{n-k}=\sum_{k=0}^n f_{n,k}(B)z^k.
\end{equation}
It would be interesting to find an elliptic analogue of \eqref{eqn:fit}.
However, as a matter of fact, not even a $q$-analogue of \eqref{eqn:fit}
has so far been established. 

It should also be mentioned that Haglund and Remmel~\cite{HR} defined and
obtained results for hit and $q$-hit numbers for rook placements
on the shifted board $B_{2n}$. It would be interesting to find
extensions of their
results, either to the elliptic setting or simply to results for
$\mathbf l$-shifted boards $B_N^{\mathbf l}$ (where rook placements
are identified as partial maximal matchings on $\mathbf l$-lazy graphs)
as we considered in \cite{SY1}.

\subsection{Elliptic hypergeometric series identities}\label{subsecehs}

The coefficients $c_k(q)$ in the expansion
\begin{equation}\label{Pexp}
P(z;q)=\sum_{k=0}^n c_k(q) [z]_q [z-1]_q \cdots [z-k+1]_q
\end{equation}
are uniquely determined by $P(z;q)$.
In particular, we have (see \cite{J})
\begin{subequations}\label{eqns:cd}
\begin{equation}\label{eqn:c_k}
c_k(q) =\frac{1}{[k]_q !}\,\mathfrak z_0\,\Delta^{(k)} P(z;q)
\end{equation}
where $\mathfrak z_0$ denotes the evaluation at $z=0$ and 
\begin{equation}\label{eqn:delta_k}
\Delta^{(k)}=(\epsilon -1)(\epsilon -q)\cdots (\epsilon -q^{k-1}),
\end{equation}
\end{subequations}
with $\epsilon$ the $1$-shift operator, i.e., $\epsilon P(z;q)=P(z+1;q)$.
In fact, the formula \eqref{eqn:c_k} for the coefficients
can be simply established by expanding
\eqref{eqn:delta_k} by means of the $q$-binomial theorem
\begin{equation}
(x+y)(x+qy)\cdots (x+q^{k-1}y)=
\sum_{j=0}^k q^{\binom{j}{2}}
\begin{bmatrix}k\\j\end{bmatrix}_q y^j x^{k-j}.
\end{equation}
Taking $P(z;q)=\prod_{i=1}^n[z+b_i-i+1]_q$, where $B=B(b_1,\dots,b_n)\subseteq
[n]\times\mathbb N$ is a Ferrers board, the coefficients in \eqref{Pexp}
become $c_k(q)=r_{n-k}(q;B)$, due to Proposition~\ref{thm:qrookthm}.
Thus one has
\begin{equation}\label{explrook}
r_{n-k}(q;B)=\frac{1}{[k]_q !}\,\mathfrak z_0\,
\Delta^{(k)}\prod_{i=1}^n[z+b_i-i+1]_q.
\end{equation}
In \cite[Equations (24) and (49)]{H0}, Haglund has used
an inversion argument involving the $q$-Chu--Vandermonde
summation to arrive at an identity equivalent to \eqref{explrook}.
For ``regular'' boards (these are Ferrers boards whose shape correspond
to Dyck paths), he was then able to express $q$-rook numbers
in terms of multiples of basic hypergeometric series of
Karlsson--Minton type (see \cite{GRhyp} for the terminology).
By analytic continuation such formulas can also be obtained
for some other boards which are not ``regular''
(such as the staircase board $\mathsf{St}_n$ whose
$q$-rook numbers are Stirling numbers of the second kind
which also admit a basic hypergeometric series representation of
Karlsson--Minton type, see \eqref{stirlingkmtype}). 
Haglund further combined his results with identities
involving $q$-hit numbers to obtain transformation
formulae for basic hypergeometric series of Karlsson--Minton type.

Now it would be very nice if one could extend the analysis
we described leading to \eqref{explrook}
to the elliptic setting, given that we were able to establish an
elliptic analogue of the product formula in Proposition~\ref{thm:qrookthm},
namely Theorem~\ref{thm:elptprod}, and also given that elliptic analogues
of basic hypergeometric series of Karlsson--Minton type exist
(see \cite{RS} or \cite[Chapter~11]{GRhyp}).
At first glance, the elliptic extension of this approach
may appear straightforward. What is needed is an explicit formula
for the coefficients $c_k(a,b;q,p)$ in
\begin{equation}
P(z;a,b;q,p)=\sum_{k=0}^n c_k(a,b;q,p) [z]_{a,b;q,p} [z-1]_{aq^2,bq;q,p}
\cdots [z-k+1]_{aq^{2(k-1)},bq^{k-1};q,p},
\end{equation}
and then take
\begin{equation*}
P(z;a,b;q,p)=\prod_{i=1}^n[z+b_i-i+1]_{aq{2(i-1-b_i)},bq^{i-1-b_i};q,p}.
\end{equation*}
Although we so far failed
to find a correct elliptic extension of \eqref{eqns:cd},
a solution of this problem does not seem to be
completely out of reach.

For the reader to get a feeling for some of the difficulties
in the elliptic setting, compare Carlitz' compact formula for
the $q$-Stirling numbers of the second kind in \eqref{carlitzexpl}
(which, as mentioned, can also be written in terms of
basic hypergeometric series of Karlsson--Minton type,
see \eqref{stirlingkmtype})
with the (not so uniform) expressions for the elliptic
Stirling numbers of the second kind which we listed in \eqref{snk}.
The latter appear not to be instances of a multiple of an
elliptic hypergeometric series of Karlsson--Minton type.
Nevertheless they may be instances of a series which is
close to being elliptic hypergeometric\footnote{Following
\cite[Chapter 11]{GRhyp}, a series
$\sum_{k\ge 0}c_k$ is called \emph{elliptic hypergeometric} if and only if
the quotient $c_{k+1}/c_k$ is an elliptic (i.e., meromorphic and
doubly periodic) function in $k$, where $k$ is viewed
as a complex variable.}.

\subsection{Relations to algebraic varieties}

Ding introduced the \emph{length function} which was first used as
the length of rook matrices by Solomon in his work on the Iwahori
ring of $M_n (F_q)$ \cite{Sol}. Given a Ferrers board $B=B(b_1,\dots, b_n)$
and a $k$-rook placement $P\in \mathcal{N}_k (B)$, the length function
$l_B(P)$ is defined by 
$$l_B (P)=\sum_{(i,j)\in P}(n-i+j-1+\gamma_{(i,j)}),$$
where $\gamma_{(i,j)}$ is the number of rooks which are in the
south-east region of the rook in $(i,j)$. Then, if we let
$C_{B,k}=\sum_{i=1}^n b_i -\frac{k(k+1)}{2}$, we have 
$$u_B(P)+l_B (P)=C_{B,k}.$$
Thus, if we define the rook length polynomial by
$$rl_k (q;B)=\sum_{P\in\mathcal{N}_{k}(B)}q^{l_B (P)},$$
then the relation to the $q$-rook number is 
$$r_k (q;B)=q^{C_{B,k}}rl_k (q^{-1};B).$$
In \cite{Ding1}, Ding studied the geometric implication of rook length
polynomials by introducing partition varieties. Partition varieties are
projective varieties which have cellular decomposition analogous to the
cellular decomposition of the Grassmannian into Schubert cells.
These partition varieties have CW-complex structure with the rook length
polynomials being their Poincar\'{e} polynomials of cohomology.
In \cite{Ding2}, Ding even generalized the study of partition varieties
by replacing the Borel subgroup of upper triangular matrices by
more general parabolic subgroups of the general linear group.
For this purpose he introduced $\gamma$-compatible partitions,
$\gamma$-compatible rook placements
and $\gamma$-compatible rook length polynomials, where $\gamma$ is a
composition. 

It would be interesting to reveal any connection between
elliptic analogues of rook numbers and algebraic varieties.



\end{document}